\title{Coarse Cohomology with Twisted Coefficients}
\author{Elisa Hartmann\thanks{Department of Mathematics, Karlsruhe Institute of 
Technology}}
\begin{document}

\maketitle

\begin{abstract}
 To a coarse structure we associate a Grothendieck topology which is determined by coarse covers. A coarse map between coarse spaces gives rise to a morphism of Grothendieck topologies. This way we define sheaves and sheaf cohomology on coarse spaces. We obtain that sheaf cohomology is a functor on the coarse category: if two coarse maps are close they induce the same map in cohomology. There is a coarse version of a Mayer-Vietoris sequence and for every inclusion of coarse spaces there is a coarse version of relative cohomology. Cohomology with constant coefficients can be computed using the number of ends of a coarse space. 
\end{abstract}

\tableofcontents

\section{Introduction}

In this paper we introduce a new invariant on coarse spaces called \emph{coarse 
sheaf cohomology}. This cohomology theory is not based on the topology of open 
sets of a metric space but on coarse covers. Coarse covers are finite families of subsets of a metric space which satisfy a boundedness condition. The collection of coarse covers of a coarse space determine a Grothendieck topology, coarse maps serve as morphisms of Grothendieck topologies. This way we define sheaves on coarse spaces and coarse maps transfer sheaves between spaces.

 Of particular interest to us are locally constant sheaves since cohomology with constant coefficient an abelian group $A$ is a functor on coarse spaces. Remarkable is the connection between the number of ends of a coarse space and its constant coefficient.

Without more advanched methods (which has been done in a follow-up research) cohomology of non-trivial examples is hard to compute. If $A$ is an infinite group, $\Z$ for example, then cohomology with 
coefficient $\Z$ on $\Z_+$, the positive integers is highly nontrivial 
\cite{Keesling1994}. This problem does not occur if $A$ is finite. Finite coefficients produce interesting cohomology groups.

\subsection{Approach}

Our purpose is to pursue an algebraic geometry approach to coarse geometry. We present sheaf cohomology on coarse spaces and study coarse spaces by coarse cohomology with twisted coefficients. The method is based on the theory on Grothendieck topologies.

Note that sheaves on Grothendieck topologies and sheaf cohomology theory have been applied in a number of areas and have lead to many breakthroughs on previously unsolved problems. As stated in~\cite{Mclarty2007} one can understand a mathematical problem by
\begin{enumerate}
 \item finding a mathematical world natural for the problem.
 \item Expressing your problem cohomologically.
 \item The cohomology of that world may solve your problem. 
\end{enumerate}
That way we can apply general theory on sheaf cohomology for tackling 
previously unsolved problems and studying notions which are quite well known. 

\subsection{What is Coarse Geometry?}
The topic coarse geometry studies metric spaces from a large scale point of 
view. We want to examine the global structure of metric spaces. One way to 
approach this problem is by forgetting small scale structure. The coarse 
category consists of coarse spaces as objects and coarse maps modulo closeness 
as morphisms.

Now coarse maps preserve the coarse structure of a space in the coarse 
category. A coarse structure is made of \emph{entourages} which are 
surroundings of the diagonal. For us metric spaces are the main objects of 
study. If $X$ is a metric space a subset $E\s X^2$ is an entourage if 
\[
\sup_{(x,y)\in E}d(x,y)<\infty.
\]
The exact opposite of a coarse space and coarse geometry of metric spaces are 
uniform spaces and the uniform topology of a metric space. Like coarse spaces 
uniform spaces are defined via surroundings of the diagonal. Uniform entourages 
get smaller though while coarse entourages get larger the sharper the point of 
view.

Many algebraic properties of infinite finitely generated groups are hidden in 
the geometry of their Cayley graph. To a finitely generated group is associated 
the word length with regard to a generating set. Note that the metric of the 
group depends on the choice of generating set while the coarse structure 
associated to the word length metric is independent of the choice of generating 
set. Note that group homomorphisms are instances of coarsely uniform maps 
between groups and a group isomorphism is an instance of a coarse 
equivalence between groups. It is very fruitful to group theory to consider 
infinite finitely generated groups as coarse objects; these will be a source of 
examples for us.

Note the examples $\R^n$ and $\Z^n$ both are coarse spaces induced by a metric, for $\R^n$ it is the euclidean metric and for $\Z^n$ the metric is induced by the group $(\Z^n,+)$. Now $\Z^n$ and $\R^n$ look entirely different on small scale they are the same on large scale though. There is a coarse equivalence $\Z^n\to \R^n$.

\subsection{Background and related Theories}

Nowadays it is hard to embrace all cohomology theory and other theories in the coarse category because of the diversity of the toolsets used. 

A \emph{cohomology theory} assigns an abelian group with a space, in a functorial manner. There are classical examples like \v Cech cohomology, simplicial homology, $\ldots$ etc. which all fit in a general framework. The standard choice in the topological category are the Eilenberg-Steenrod axioms. They consist of 5 conditions which characterize singular cohomology on topological spaces. A \emph{generalized cohomology theory} is a sequence of contravariant functors $(H^n)_n$ from the category of pairs of topological spaces $(X,A)$ to the category of abelian groups equipped with natural transformations
\[
\delta:H^n(A,\emptyset)\to H^{n+1}(X,A)
\]
for $n\in \N$, such that
\begin{enumerate}
\item \emph{Homotopy}: If $f_1,f_2:(X,A)\to (Y,B)$ are homotopic morphisms then they induce isomorphic maps in cohomology.
\item \emph{Excision}: If $(X,A)$ is a pair and $U\s A$ a subset such that $\bar U\s A^\circ$ then the inclusion 
\[
i:(X\ohne U,A\ohne U)\to (X,A)
\]
induces an isomorphism in cohomology.
\item \emph{Dimension}: The cohomology of the point is concentrated in degree $0$.
\item \emph{Additivity}: If $X=\bigsqcup_\alpha X_\alpha$ is a disjoint union of topological spaces then
\[
 H^n(X,\emptyset)=\prod_\alpha H^n(X_\alpha,\emptyset).
\]
\item \emph{Exactness}: Every pair of topological spaces $(X,A)$ induces a long exact sequence in cohomology:
\f{
\cdots\to& H^n(X,A)\to H^n(X,\emptyset)\to H^n(A,\emptyset)\\
\to & H^{n+1}(X,A)\to \cdots.
}
\end{enumerate}

We are interested in theories that are functors on coarse spaces and coarse maps. Let us first recall the standard theories.

There are a number of cohomology theories in the coarse category we present two of them which are the most commonly used ones. We first present the most basic facts about \emph{controlled operator $K$-theory} and Roe's \emph{coarse cohomology}.

We begin with a covariant invariant $K_*(C^*(\cdot))$ on proper metric spaces called \emph{controlled $K$-theory}. Note that if a proper metric space $B$ is bounded then it is compact. Then~\cite[Lemma~6.4.1]{Higson2000} shows
 \[
  K_p(C^*(B))=\begin{cases}
  \Z & p=0\\
  0 & p=1.
  \end{cases}
 \]
 There is a notion of flasque spaces for which controlled $K$-theory vanishes. An exemplary example is $\Z_+$; in~\cite[Lemma~6.4.2]{Higson2000} it is shown that
 \[
  K_*(C^*(\Z_+))=0.
 \]
 The above is used in order to compute the controlled $K$-theory of $\Z^n$:
 \[
  K_p(C^*(\Z^n))=\begin{cases}
  \Z & p\equiv n \mod 2\\
  0 & p\equiv n+1 \mod 2
  \end{cases}
 \]
  which is~\cite[Theorem~6.4.10]{Higson2000}. 
 The notion of Mayer-Vietoris sequence is adapted to this setting: If there are two subspaces $A,B$ of a coarse space and if they satisfy the coarse excisive property which is introduced in~\cite{Higson1993} then~\cite[Lemmas~1,2; Section~5]{Higson1993} combine to a Mayer-Vietoris sequence in controlled $K$-theory.
  There is a notion of homotopy for the coarse category which is established in~\cite{Higson1994}. Then~\cite[Theorem~5.1]{Higson1994} proves that controlled K-Theory is a coarse homotopy invariant.

Let us now consider \emph{coarse cohomology} $HX^*(\cdot;A)$ which for $A$ an abelian group is a contravariant invariant on coarse spaces.
The ~\cite[Example~5.13]{Roe2003} notes that if a coarse space $B$ is bounded then 
 \[
  HX^q(B;A)=\begin{cases}
  A & q=0\\
  0 & \mbox{otherwise}.
  \end{cases}
 \]
 Now the space $\Z^n$ reappears as an example in~\cite[Example~5.20]{Roe2003}:
 \[
  HX^q(\R^n;\R)=\begin{cases}
  0 & q\not=n\\
  \R & q=n
  \end{cases}
 \]
 Whereas another example is interesting: the~\cite[Example~5.21]{Roe2003} shows that if $G$ is a finitely generated group then there is an isomorphism
\[
 HX^*(G;\Z)=H^*(G;\Z[G]).
\]
Here the right side denotes group cohomology.
 In order to compute coarse cohomology there is one method: We denote by $H^*_c(X;A)$ the cohomology with compact supports of $X$ as a topological space. There is a character map
\[
 c:HX^q(X;A)\to H^q_c(X;A)
\]
By~\cite[Lemma~5.17]{Roe2003} the character map $c$ is injective if $X$ is a proper coarse space which is topologically path-connected. Now~\cite[Theorem~5.28]{Roe2003} states: If $R$ is a commutative ring and $X$ is a uniformly contractible proper coarse space the character map for $R$-coefficients is an isomorphism. 

 In the course of this article we will design a new cohomology theory on coarse spaces. It has all the pros of the existing coarse cohomology theories and can be compared with them. The main purpose of this work is to design computational tools for the new theory and compute cohomology of a few exemplary examples. 

Our main tool will be \emph{sheaf cohomology theory}, which we now recall. If $X$ is a coarse space then $\sheaves X$ denotes the abelian category of sheaves of abelian groups on $X$. Note that $\sheaves X$ has enough injectives. Then the global sections functor
\[
\sheaff\mapsto \Gamma(X,\sheaff)
\]
is a left exact functor between abelian categories $\sheaves X$ and $\abel$, the category of abelian groups. The right derived functors are the sheaf cohomology functors. If $\sheaff$ is a sheaf on $X$ then $\cohomology * X \sheaff$ denotes coarse cohomology with twisted coefficients with values in $\sheaff$.

There are many ways to compute sheaf cohomology. One of them uses acyclic resolutions. Now every sheaf $\sheaff$ on a coarse space $X$ has an injective resolution and injective sheaves are acyclic. Thus there exists a resolution
\[
0\to\sheaff\to \sheafi_0\to\sheafi_1\to \sheafi_2\to \cdots
\]
with acyclics $\sheafi_q$, $q\ge 0$. Then the sheaf cohomology groups $\cohomology q X \sheaff$ are the cohomology groups of the following complex of abelian groups
\[
0\to \sheafi_0(X)\to \sheafi_1(X)\to \sheafi_2(X)\to\cdots.
\]

We can also compute sheaf cohomology by means of \v Cech cohomology. If $(U_i)_{i\in I}$ is a \emph{coarse cover} of a subset $U\s X$ and $\sheaff$ an abelian presheaf on $X$ then the group of $q$-cochains is
\[
C^q(\{U_i\to U\}_i,\sheaff)=\prod_{(i_0,\ldots,i_q)\in I^{q+1}}\sheaff(U_{i_0}\cap\cdots\cap U_{i_q})
\]
The coboundary operator $d^q:C^q(\{U_i\to U\}_i,\sheaff)\to C^{q+1}(\{U_i\to U\}_i,\sheaff)$ is defined by
\[
(d^qs)_{i_0,\ldots,i_{q+1}}=\sum_{\nu=0}^{q+1}(-1)^\nu s_{i_0,\ldots,\hat i_\nu,\ldots i_{q+1}}|_{i_0,\ldots,i_{q+1}}
\]
Then $C^*(\{U_i\to U\}_i,\sheaff)$ is a complex and $\cohomology *{\{U_i\to U\}_i}\sheaff$ is defined to be its cohomology. Now sheaf cohomology can be computed:
\[
\cohomology q U \sheaff=\varinjlim_{\{U_i\to U\}_i}\cohomology q {\{U_i\to U\}_i}\sheaff.
\]

In good circumstances we can compute sheaf cohomology using an acyclic cover. If $(U_i)_{i\in I}$ is a coarse cover of a coarse space $X$ and $\sheaff$ a sheaf on $X$ and if for every nonempty $\{i_1,\ldots,i_n\}\s I$, $q>0$ we have that
\[
\cohomology q {U_{i_1}\cap\cdots\cap U_{i_n}} \sheaff=0
\]
then already
\[
\cohomology q X \sheaff =\cohomology q {\{U_i\to U\}_i} \sheaff
\]
for every $q\ge 0$.

Note that homotopy also plays an important part when computing sheaf cohomology.

\subsection{Main Contributions}

The general idea of this work is to transfer toolsets from other topics like 
algebraic topology and algebraic geometry and use them in the coarse category. 
The cohomology theory we are aiming at has its roots in algebraic geometry. 
First let us note a few aspects which distinguishes the new theory.

 There has been much effort in establishing axioms for cohomology theories in 
the coarse category. In \cite{Bunke2017} has been proposed a choice of axioms 
for coarse cohomology theories. Now we will test our theory against the 
Eilenberg-Steenrod axiom system. The new theory satisfies similar properties 
which are going to be discussed in the following list

\begin{enumerate}
\item \emph{Homotopy:} The relation close on coarse maps can be regarded as a notion of homotopy on the coarse category. Sheaf cohomology on coarse spaces 
is an invariant modulo close.
\item \emph{Excision:} Subsection~\ref{subsec:loccoh} presents local cohomology in the coarse category.
\item \emph{Dimension:} The space $\mathbb Z_+$ can be understood as the 
coarse equivalent of a point. It is however not acyclic for general 
coefficients. If the spaces $\mathbb Z^n$ are understood as representatives  for 
dimension then coarse cohomology with twisted coefficients sees dimension.
\item \emph{Additivity:} Sheaf cohomology sees coproducts, see subsection~\ref{subsec:definition}.
\item \emph{Exactness:} Subsection~\ref{subsec:MV} presents a coarse version of the Mayer-Vietoris sequence.
\end{enumerate}

Now why are there so many powerful results is one of the most natural questions 
we can ask. The main reason is, that typically sheaf cohomology is a powerful 
tool in a number of areas. Examples are de Rham cohomology in differential 
geometry, singular cohomology for nice enough spaces in algebraic topology and 
\'etale cohomology in algebraic geometry.

 A Grothendieck topology is the least amount of data needed to define  sheaves 
and sheaf cohomology. And that is where we start. We define the Grothendieck 
topology of coarse covers associated to a coarse space in 
Definition~\ref{defn:grothendiecktopology}. Then we discover in 
Lemma~\ref{lem:morphismoftopologies} that coarse maps give rise to a morphism of 
topologies. That is all the information that we need to use the powerful 
machinery of sheaf cohomology. 

Then we obtain the first important result: if two coarse maps are close then they induce isomorphic maps in cohomology with twisted coefficients. This is Theorem~\ref{thm:functor}.

\begin{thma}
Coarse cohomology with twisted coefficients is a functor on coarse spaces and coarse maps modulo closeness.
\end{thma}
Thus coarsely equivalent coarse spaces have the same cohomology.

The coarse equivalent of a trivial space is either the empty set or a bounded space or both. If $B$ is a bounded space then for every coefficient $\sheaff$ on $B$:
\[
 \cohomology * B \sheaff=0
\]
which is a result of Example~\ref{ex:bounded}.

Some computional tools we recognize from algebraic topology can be adopted for our setting. The Chapter~\ref{subsec:MV} presents a coarse version of Mayer-Vietoris:

\begin{thma}\name{Mayer-Vietoris}
Let $X$ be a coarse space and $A,B$ two subsets that coarsely cover $X$. Then there is an exact sequence in cohomology
 \f{
 \cdots&\to \cohomology {i-1} {A\cap B}\sheaff\to \cohomology {i} {A\cup B}\sheaff\to\cohomology {i} 
{A}\sheaff\times\cohomology i B \sheaff\\
 &\to \cohomology {i} {A\cap B}\sheaff\to\cdots
 }
 for every sheaf $\sheaff$ on $X$.
\end{thma}

The Chapter~\ref{subsec:loccoh} discusses relative cohomology in the coarse category.

\begin{thma}
 Let $Z\s X$ be a subspace of a coarse space and let $Y=X\ohne Z$. Then there is a long exact sequence
 \[
  0\to \cohomology 0 {U}{\Gamma_Z(\sheaff)}\to \cohomology 0 {U}\sheaff\to \cohomology 0 {U} 
{\sheaff|_Y}\to \cohomology 1 {U} {\Gamma_Z(\sheaff)}\to \cdots
 \]
for every subset $U\s X$ and every sheaf $\sheaff$ on $X$.
\end{thma}

In Chapter~\ref{sec:cc} constant coefficients on coarse spaces are introduced. 
If $A$ is an abelian group and $X$ is a coproduct of $n$ unbounded coarse 
spaces then its number of ends is at least $n$ and $A(X)\ge A^n$. In fact if $X$ 
does not have finitely many ends then $A(X)=A^{\oplus \N}$. This is discussed 
in Theorem~\ref{thm:constantcoeffs}.

First let us note that $\Z_+$ is imperfect as a coarse version of a point as it 
is not a final object and does not have trivial cohomology. While $\cohomology q 
{\Z_+} A=0$ for $q\ge 2$ and every constant coefficient $A$, the cohomology in 
degree 1,
\[
\cohomology 1 {\Z_+} \Z\not=0
\]
is nontrivial for $\Z$-coefficients. By Lemma~\ref{lem:zplusmanyended} every 
unbounded subset $U\s \Z_+$ has either infinitely many ends or the inclusion 
$U\to \Z_+$ is coarsely surjective.

Note if $X$ is any metric space we can find a sequence $(x_i)_i\s X$ with 
$d(x_i,x_j)>i$ for $j<i$. This space is discrete which means every entourage on 
$(x_i)_i$ has finitely many offdiagonal points. Its Higson compactification is 
homeomorphic to the Stone-\v Cech compactification of the natural numbers.

This makes it extremely hard to compute cohomology of specific examples. The 
cohomology of a bounded metric space is trivial of course since every sheaf on 
it vanishes. If $X$ is discrete or $\asdim(X)=0$ then the cohomology of $X$ is 
acyclic.

\subsection{Outline}
Now we indicate an outline of the chapters that are going to appear.
\begin{itemize}
 \item Chapters~\ref{CCwtCsec:intro},\ref{sec:coents} serve as an introduction.
 \item in Chapter~\ref{sec:lac} we construct new spaces out of old ones.
 \item Chapter~\ref{sec:ct} studies the coarse cohomology theory with twisted 
coefficients
\item Chapter~\ref{sec:cc} presents cohomology with constant coefficients.
\end{itemize}

\section{The Coarse Category}
\label{CCwtCsec:intro}
The following chapter introduces coarse spaces and coarse maps between coarse spaces. It has been kept as short as possible, giving only the most basic definitions needed for understanding this paper. All this information can be found in~\cite[chapter~2]{Roe2003}.

\subsection{Coarse Spaces}
\label{sec:coarsedefs}
\begin{defn}\name{inverse, product}
 Let $X$ be a set and let $E$ be a subset of $X^2$. Then the \emph{inverse}
$E^{-1}$ is 
defined by 
 \[
  E^{-1}=\{(y,x)|(x,y)\in E\}.
 \]
 
 A set $E$ is called \emph{symmetric} if $E=E^{-1}$.

 For two subsets $E_1,E_2\s X^2$ the \emph{product} $E_1\circ E_2$ is given by
 \[
  E_1\circ E_2=\{(x,z)|\exists y: (x,y)\in E_1, (y,z)\in E_2\}. 
 \]
\end{defn}

\begin{defn}\name{coarse structure}
\label{defn:coarsestructure}
 Let $X$ be a set. A \emph{coarse structure} on $X$ is a collection of subsets $E\s X^2$ which will be referred as \emph{entourages} which follow the following axioms:
 \begin{enumerate}
  \item  the diagonal $\Delta_X=\{(x,x)|x\in X\}$ is an entourage;
  \item  if $E$ is an entourage and $F\s E$ a subset then $F$ is an entourage;
  \item  if $F,E$ are entourages then $F\cup E$ is an entourage;
  \item  if $E$ is an entourage then the inverse $E^{-1}$ is an entourage;
  \item  if $E_1,E_2$ are entourages then their product $E_1\circ E_2$ is an entourage.
 \end{enumerate}
 The set $X$ together with the coarse structure on $X$ will be called a \emph{coarse space}.
\end{defn}

\begin{defn}\name{connected}
 A coarse space $X$ is \emph{connected} if 
 \begin{enumerate}
  \item [6.] for every points $x,y\in X$ the set $\{(x,y)\}\s X^2$ is an entourage. 
 \end{enumerate}
In the course of this paper all coarse spaces are assumed to be connected unless said otherwise. 
\end{defn}

\begin{defn}\name{bounded set}
 Let $X$ be a coarse space. A subset $B\s X$ is called \emph{bounded} if $B^2$ is an entourage.
 \end{defn}

\begin{defn}
Let $X$ be a set and let $K\s X$ and $E\s X^2$ be subsets. One writes
\[
 E[K]=\{x|\exists y\in K:(x,y)\in E\}.
\]
In case $K$ is just a set containing one point $p$, we write $E_p$ for $E[\{p\}]$ (called a section).
\end{defn}

\begin{lem}
\label{lem:boundedsets}
Let $X$ be a coarse space. 
\begin{itemize}
\item If $B_1,B_2\s X$ are bounded then $B_1\times B_2$ is an entourage and $B_1\cup B_2$ is bounded.
\item For every bounded subset $B\s X$ and entourage $E$ the set $E[B]$ is bounded. 
\end{itemize}
\end{lem}
\begin{proof}
\begin{itemize}
\item Fix two points $b_1\in B_1$ and $b_2\in B_2$ then $(b_1,b_2)$ is an entourage in $X$. Thus
\[
B_1^2\circ (b_1,b_2)\circ B_2^2=B_1\times B_2
\]
is an entourage. Now
\[
(B_1\cup B_2)^2=B_1^2\cup B_1\times B_2 \cup B_2\times B_1 \cup B_2^2
\]
is an entourage, thus $B_1\cup B_2$ is bounded.
\item Note that 
\[
E\circ B^2=E[B]\times B
\]
is an entourage.
\end{itemize}
\end{proof}

\begin{rem}
Note that an intersection of coarse structures is again a coarse structure.
\begin{itemize}
\item If $X$ is a set and $\delta$ a collection of subsets of $X^2$ then the smallest coarse structure $\varepsilon$ that contains each element of $\delta$ is called the \emph{coarse structure that is generated by $\delta$}. Then $\delta$ is called a \emph{subbase for $\varepsilon$}.
\item If $\varepsilon$ is a coarse structure and $\varepsilon'\s \varepsilon$ a subset such that $E\in \varepsilon$ implies there is some $E'\in \varepsilon'$ with $E\s E'$ then $\varepsilon'$ is called a \emph{base for $\varepsilon$.}
\end{itemize}
\end{rem} 

\begin{ex}
If $X$ is a set there are two trivial coarse structures on $X$:
\begin{enumerate}
\item the \emph{discrete coarse structure} consists of subset of the diagonal and finitely many off-diagonal points.
\item the \emph{maximal coarse structure} is generated by $X^2$. Note that in this case each subset of $X$ and in particular $X$ itself is bounded.
\end{enumerate}
\end{ex}

\begin{ex}
\label{ex:metricspaces+groups}
If $X$ is a metric space with metric $d$ then the \emph{bounded coarse 
structure} of $X$ consists of those subsets $E\s X^2$ for which
\[
\sup_{(x,y)\in E}d(x,y)<\infty.
\]
A coarse space $X$ is called metrizable if there is a metric $d$ that can be 
defined on it such that $X$ carries the bounded coarse structure associated to 
$d$. Note that by \cite[Theorem~2.55]{Roe2003} a coarse space is metrizable if 
and only if it has a countable base.  

Metric spaces which are of particular interest in the topic of coarse geometry 
are Riemannian manifolds and finitely generated groups. Let $G$ be a finitely 
generated group equipped with a finite generating set $S$. Then the word length 
$l:G\to \N$ according to $S$ assigns an element $g\in G$ with the minimal 
length of a word written in the alphabet $S\cup \ii S$ that represents $g$. Then 
the map
\f{
d:G\times G&\to G\\
(g,h)&\mapsto l(\ii gh)
}
is a metric on $G$. Note the metric depends on the generating set, the coarse 
structure associated to $d$ does not though.
\end{ex}

\begin{ex}
If $X$ is a paracompact and locally compact Hausdorff space and $\bar X$ a compactification of $X$ with boundary $\partial X$ then the topological coarse structure associated to the given compactification consists of subsets $E\s X^2$ such that
\[
\partial E\cap {\partial X}^2\ohne \Delta_{\partial X}=\emptyset.
\]
If the compactification is second countable then by~\cite[Example~2.53]{Roe2003} the topological coarse structure on $X$ is not metrizable.
\end{ex}

\subsection{Coarse Maps}

\begin{defn}\name{close}
Let $S$ be a set and let $X$ a be coarse space. Two maps $f,g:S\to X$ are called \emph{close} if
\[
 \{(f(s),g(s))|s\in S\}\s X^2
\]
is an entourage.
\end{defn}

\begin{defn}\name{maps}
Let $f:X\to Y$ be a map between coarse spaces. Then $f$ is called
\begin{itemize}
\item \emph{coarsely proper} if for every bounded set $B$ in $Y$ the inverse image $f^{-1}(B)$ is bounded in $X$;
\item \emph{coarsely uniform} if every entourage $E$ of $X$ is mapped by $\zz f=f\times f:X^2\to Y^2$ to an
entourage $\zz f(E)$ of $Y$;
\item  a \emph{coarse map} if it is both coarsely proper and coarsely uniform;
\item a \emph{coarse embedding} if $f$ is coarsely uniform and for every entourage $F\s Y^2$ the inverse image $\izp f{F}$ is an entourage.
\end{itemize}
\end{defn}

\begin{defn}\name{coarsely equivalent}
\begin{itemize}
 \item A coarse map $f:X\to Y$ between coarse spaces is a \emph{coarse equivalence} if there is a coarse map $g:Y\to X$ such that $f\circ g:Y\to Y$ is close to the identity on $Y$ and $g\circ f:X\to X$ is close to the identity on $X$.
\item two coarse spaces $X,Y$ are \emph{coarsely equivalent} if there is a coarse equivalence $f:X\to Y$.
\end{itemize}
\end{defn}

\begin{defn}
We denote by $\coarse$ the category with objects coarse spaces and morphisms 
coarse maps modulo close. Then coarse equivalences are the isomorphisms in the 
\emph{coarse category}.
\end{defn}

\section{Coentourages}
\label{sec:coents}
In this chapter coentourages are introduced. We study the dual characteristics of coentourages to entourages.

\subsection{Definition}
This is a special case of~\cite[Definition~5.3, p.~71]{Roe2003}:
\begin{defn}
\label{defn:coentourages}
Let $X$ be a coarse space. A subset $C\s X^2$ is called a \emph{coentourage} if for every entourage $E$ there is a bounded set $B$ such that
\[
 C\cap E\s B^2.
\]
 The set of coentourages in $X$ is called the \emph{cocoarse structure} of $X$.
\end{defn}

\begin{lem}
\label{lem:coentourages}
 The following properties hold:
 \begin{enumerate}
  \item Finite unions of coentourages are coentourages.
  \item Subsets of coentourages are coentourages.
  \item If $f:X\to Y$ is a coarse map between coarse spaces then for every coentourage $D\s Y^2$ the set $\izp f D$ is a coentourage.
 \end{enumerate}
\end{lem}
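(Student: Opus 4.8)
The strategy is to verify each of the three claimed properties directly from Definition~\ref{defn:coentourages}, using in each case the relevant closure axiom of the coarse structure (and, for part~3, the defining properties of a coarse map). Each verification is a short "chase the quantifiers" argument; I will handle them in the stated order.

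For part~1, let $C_1,\dots,C_n$ be coentourages and set $C=\bigcup_i C_i$. Given an entourage $E$, for each $i$ there is a bounded set $B_i$ with $C_i\cap E\s B_i^2$. The set $B=\bigcup_i B_i$ is bounded: indeed $B^2$ is contained in a finite union of products $B_i\circ\Delta\circ\cdots$; more carefully, $B^2=\bigcup_{i,j}(B_i\times B_j)$, and since $X$ is connected each $B_i\times B_j\s (B_i^2)\circ\{(b_i,b_j)\}\circ(B_j^2)$ for chosen basepoints, hence is an entourage, so $B^2$ is an entourage by axiom~3. Then $C\cap E=\bigcup_i(C_i\cap E)\s\bigcup_i B_i^2\s B^2$, as required. (Alternatively, one can just quote the standard fact that a finite union of bounded sets in a connected coarse space is bounded, which the reader can supply from~\cite{Roe2003}.)

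Part~2 is immediate: if $C'\s C$ with $C$ a coentourage, then for any entourage $E$ we have $C'\cap E\s C\cap E\s B^2$ for the bounded $B$ supplied by $C$, so the same $B$ witnesses that $C'$ is a coentourage. No coarse-structure axioms beyond the definition are needed.

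For part~3, let $f:X\to Y$ be a coarse map, $D\s Y^2$ a coentourage, and $C=\izp f D=(f^2)^{-1}(D)$. Fix an entourage $E\s X^2$. Since $f$ is coarsely uniform, $f^2(E)$ is an entourage of $Y$, so there is a bounded set $B\s Y$ with $D\cap f^2(E)\s B^2$. Now $f^2(C\cap E)\s f^2(C)\cap f^2(E)\s D\cap f^2(E)\s B^2$, hence $C\cap E\s (f^2)^{-1}(B^2)=(f^{-1}(B))^2$. Since $f$ is coarsely proper, $f^{-1}(B)$ is bounded in $X$, and it is the witness we need. The main point to get right — and the only place any subtlety enters — is the bookkeeping with direct and inverse images of $f^2$, in particular the inclusion $f^2(C)\s D$ (which holds because $C=(f^2)^{-1}(D)$) and the identity $(f^2)^{-1}(B^2)=(f^{-1}(B))^2$; after that the argument is a formality. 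I do not anticipate a genuine obstacle in any of the three parts; the "hardest" step is simply being careful that a finite union of bounded sets is bounded in part~1, which relies on connectedness of $X$.
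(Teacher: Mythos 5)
Your proof is correct and follows essentially the same route as the paper's: the same quantifier chase for parts 1 and 2 (with connectedness supplying boundedness of $B_1\cup B_2$), and for part 3 the same chain $f^2(C\cap E)\s D\cap f^2(E)\s B^2$ followed by pulling back to $(f^{-1}(B))^2$ and invoking coarse properness. No gaps.
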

\begin{proof}
 \begin{enumerate}
  \item Let $C_1, C_2$ be coentourages. Then for every entourage $E$ there are bounded sets $B_1, B_2$ such that
  \f{
   (C_1\cup C_2)\cap E
   &=C_1\cap E \cup C_2\cap E\\
   &\s B_1\times B_1\cup B_2\times B_2\\
   &\s (B_1\cup B_2)^2.
  }
Now $B_1\cup B_2$ is bounded because $X$ is connected.
\item Let $C$ be a coentourage and $D\s C$ a subset. Then for every entourage $E$ there is some bounded set $B$ such that
\f{
D\cap E
&\s C\cap E\\
 &\s B^2.
}
\item This is actually a special case of~\cite[Lemma~5.4]{Roe2003}. For the convenience of the reader we include the proof anyway. 

Let $E$ be an entourage in $X$. Then there is some bounded set $B\s Y$ such that
\f{
 f^2(\izp f D\cap E)
 &\s D\cap f^2(E)\\
 &\s B^2.
}
But then
\f{
 \izp f D\cap E 
 &\s \iz f\circ \zz f(\izp f D\cap E)\\
 &\s \izp f {B^2}\\
 &= f^{-1}(B)^2.
}
 \end{enumerate}
\end{proof}

\begin{ex}
If $G$ is an infinite countable group then there is a canonical
coarse structure on $G$: A subset $E\s G^2$ is an entourage if the set
\[
 \{g^{-1}h: (g,h)\in E\}
\]
is finite. Note for finitely generated group this definition agrees with 
Example~\ref{ex:metricspaces+groups}. If $U,V\s G$ are two subsets of $G$ then 
$U\times V$ is a coentourage if $U\cap Vg$ is finite for every $g\in G$.
\end{ex}

\begin{ex}
 In the coarse space $\Z$ one can see three examples:
 \begin{itemize}
  \item the even quadrants are a coentourage: $\{(x,y): xy<0\}$.
  \item For $n\in \Z$ the set perpendicular to the diagonal with foot $(n,n)$ is a coentourage: 
$\{(n-x,n+x):x\in \Z\}$.
\item Here is another example: $\{(x,2x):x\in \Z\}$ is a coentourage.
 \end{itemize}
 \end{ex}
 
 \begin{ex}
  Look at the infinite dihedral group which is defined by
  \[
   D_\infty=\langle a,b:a^2=1,b^2=1 \rangle.
  \]
 In $D_\infty$ the set 
 \[
  \{(ab)^n,(ab)^na:n\in\N\}\times \{(ba)^n,(ba)^nb:n\in \N\}
 \]
is a coentourage.
 \end{ex}

\subsection{A Discussion/ Useful to know}

\begin{lem}
\label{lem:bounded}
 Let $X$ be a coarse space. Then for a subset $B\s X$ the set $B^2$ is a coentourage if and only if $B$ is bounded.
\end{lem}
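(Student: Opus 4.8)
The plan is to prove both implications directly from the axioms of a coarse structure and the definition of a coentourage (Definition~\ref{defn:coentourages}), via a short bookkeeping argument.

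First I would handle the implication ``$B$ bounded $\Rightarrow$ $B^2$ is a coentourage''. If $B$ is bounded then $B^2$ is an entourage by definition. Given an arbitrary entourage $E$, we have $B^2\cap E\s B^2=B\times B$, so the defining condition of a coentourage is satisfied with the witnessing bounded set being $B$ itself. Nothing more is required in this direction.

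The converse ``$B^2$ is a coentourage $\Rightarrow$ $B$ is bounded'' is where the one idea of the proof sits: test the coentourage condition against the diagonal. Since $\Delta_X$ is an entourage, Definition~\ref{defn:coentourages} supplies a bounded set $B'$ with $B^2\cap\Delta_X\s (B')^2$. But $B^2\cap\Delta_X=\{(b,b):b\in B\}$, and $(b,b)\in B'\times B'$ for every $b\in B$ forces $B\s B'$. Hence $B^2\s (B')^2$, and as $(B')^2$ is an entourage the downward-closure axiom for coarse structures gives that $B^2$ is an entourage, i.e.\ $B$ is bounded.

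I do not expect a real obstacle here; the only thing to notice is that intersecting $B^2$ with the diagonal isolates exactly enough of $B^2$ to recover $B$ as a subset of a bounded set. The degenerate case $B=\emptyset$ is harmless, since $\emptyset^2=\emptyset$ is an entourage, so $\emptyset$ is both bounded and (vacuously) a coentourage.
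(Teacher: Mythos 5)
Your proof is correct and follows essentially the same route as the paper: the easy direction uses $B$ itself as the witnessing bounded set, and the converse tests the coentourage condition against the diagonal $\Delta_X$ to recover $B$ inside a bounded set. Your write-up is in fact slightly more explicit than the paper's (which leaves the forward direction as "easy" and states the diagonal argument tersely), but there is no difference in substance.
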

\begin{proof}
  If $B$ is bounded then it is easy to see that $B^2$ is a coentourage.
   
  Conversely suppose $B^2$ is a coentourage. Then 
  \[
   \Delta_X\cap B^2\s B^2
  \]
and $B^2$ is the smallest squared subset of $X^2$ which contains
\[
 \{(b,b): b\in B\}
\]
which is $\Delta_X\cap B^2.$ Thus $B$ is bounded. 
\end{proof}

\begin{defn}\name{dual structure}
If $X$ is a coarse space let $\varepsilon$ and $\gamma$ be collections of 
subsets of $X^2$. Denote by $\beta$ the set of bounded sets. We say 
that \emph{$\varepsilon$ detects $\gamma$} if 
 \begin{enumerate}
 \item for every $D\not\in\gamma$ there is some $E\in \varepsilon$ such that 
$D\cap E\not\s B\times B$ for every $B\in \beta$.
\item a subset $D\s X\times X$ is contained in $\gamma$ if for every 
$E\in \varepsilon$ there is some $B\in\beta$ such that $D\cap E\s B\times B$ 
.
\end{enumerate}
Then we say that \emph{$\varepsilon$ is dual to $\gamma$} if $\varepsilon$ 
detects $\gamma$ and $\gamma$ detects $\varepsilon$. By definition the 
collection of entourages detects the collection of coentourages.
\end{defn}

\begin{prop}
\label{prop:metric_coarselynormal}
 Let $X$ be a coarse space with the bounded coarse structure of a metric 
space\footnote{In what follows coarse spaces with the bounded coarse structure 
of a metric space will be refered to as metric spaces.} then the coarse 
structure of $X$ is dual to the cocoarse structure of $X$. 
\end{prop}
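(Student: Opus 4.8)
The plan is to unwind what coarse normality asks for. With $\varepsilon$ the coarse structure and $\gamma$ the cocoarse structure, duality means $\varepsilon$ detects $\gamma$ and $\gamma$ detects $\varepsilon$. The first of these holds by definition (coentourages are detected by entourages, as remarked right after Definition~\name{dual structure}), so the entire content is to show $\gamma$ detects $\varepsilon$: given a subset $D\s X^2$ that is \emph{not} an entourage, I must produce a coentourage $C$ with $D\cap C\not\s B^2$ for every $B^2\in\gamma$. By Lemma~\ref{lem:bounded} the sets $B^2$ belonging to $\gamma$ are precisely the squares of bounded sets, so the target is: a coentourage $C$ with $D\cap C$ not contained in $B^2$ for any bounded $B$.

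Fix a metric $d$ inducing the coarse structure. Then ``$D$ is not an entourage'' says exactly that $\sup\{d(x,y):(x,y)\in D\}=\infty$. First I would pick, for each $n\in\N$, a pair $(x_n,y_n)\in D$ with $d(x_n,y_n)\ge n$, and set $C:=\{(x_n,y_n):n\in\N\}$.

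Next I would verify $C$ is a coentourage. Given any entourage $E$, put $R:=\sup\{d(x,y):(x,y)\in E\}<\infty$; then $C\cap E\s\{(x_n,y_n):n\le R\}$ is a finite set, hence contained in $B^2$ for $B$ the finite --- thus bounded --- set of coordinates occurring in $C\cap E$. So $C$ meets every entourage in the square of a bounded set, i.e.\ $C\in\gamma$. Finally, $D\cap C=C$, and $C$ cannot lie inside any $B^2$ with $B$ bounded, since that would force $d(x_n,y_n)$ to be at most the (finite) diameter of $B$ for every $n$, contradicting $d(x_n,y_n)\ge n$. This produces the required coentourage and establishes the duality.

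I do not expect a real obstacle. The two points that deserve care are the translation of ``$D$ is not an entourage'' into the unboundedness of $d$ on $D$ (which is exactly where the metric, rather than an arbitrary coarse structure, is used), and keeping straight that membership of $B^2$ in either structure is equivalent to $B$ being bounded --- via the definition of bounded set for $\varepsilon$, and via Lemma~\ref{lem:bounded} for $\gamma$ --- so that the two ``detect'' conditions are genuinely symmetric.
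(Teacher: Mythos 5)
Your proof is correct and follows essentially the same route as the paper: both arguments select a sequence of pairs in the non-entourage $D$ escaping every entourage (you via $d(x_n,y_n)\ge n$, the paper via a countable basis $E_1\subseteq E_2\subseteq\cdots$ with $(x_i,y_i)\in F\setminus E_i$), observe that the resulting set meets each entourage in a finite, hence bounded, set and is therefore a coentourage, and conclude that its intersection with $D$ is itself and cannot lie in $B^2$ for any bounded $B$. Your explicit reduction of the two ``detect'' conditions and the use of Lemma~\ref{lem:bounded} is a helpful clarification but not a different argument.
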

\begin{proof}
 Let $F\s X^2$ be a subset which is not an entourage. Then for every entourage there is a point in $F$ that is not in $E$. Now we have a countable basis for the coarse structure:
\[
 E_1,E_2,\ldots,E_n,\ldots
\]
ordered by inclusion. Then there is also a sequence $(x_i,y_i)_i\s X^2$ with $(x_i,y_i)\not \in E_i$ and $(x_i,y_i)\in F$. Denote this set of points by $f$. Then for every $i$ the set 
\[
 E_i\cap f
\]
is a finite set of points, thus $f$ is a coentourage. But $F\cap f=f$ is not an entourage, specifically it is not contained in $B^2$ if $B$ is bounded.
\end{proof}

\begin{prop}
 Let $X$ be a paracompact and locally compact Hausdorff space. Let $\bar X$ be 
a compactification of $X$ and equip $X$ with the topological coarse structure 
associated to the given compactification. Then
\begin{enumerate}
 \item a subset $C\s X^2$ is a coentourage if $\bar C\cap \Delta_{\partial X}$ 
is empty.
 \item if $U,V$ are subsets of $X$ then $U\times V$ is a coentourage if 
$\partial U\cap \partial V=\emptyset$.
 \item the coarse structure of $X$ is dual to the cocoarse structure of $X$.
 \end{enumerate}
\end{prop}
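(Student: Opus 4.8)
The plan is to work throughout with the standard description of the topological coarse structure: $E\subseteq X^2$ is an entourage exactly when its closure $\bar E$ in $\bar X\times\bar X$ satisfies $\bar E\setminus(X\times X)\subseteq\Delta_{\partial X}$, where $\partial X=\bar X\setminus X$, and for $U\subseteq X$ I read $\partial U$ as the trace $\bar U\cap\partial X$ of its closure at infinity. The preliminary fact I would record first is that, with $\bar X$ compact Hausdorff, a subset $B\subseteq X$ is bounded if and only if $\bar B\subseteq X$, i.e.\ iff $B$ is relatively compact in $X$; this drops out of $\overline{B^2}=\bar B\times\bar B$ together with the entourage criterion, since $\bar B\times\bar B$ meets $\bar X^2\setminus X^2$ inside $\Delta_{\partial X}$ only when $\bar B\cap\partial X=\emptyset$. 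Besides this I will use two routine topological facts: closures of products in $\bar X\times\bar X$ are products of closures, and two distinct points of the compact Hausdorff (hence normal) space $\bar X$ can be separated by open sets with disjoint closures.

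For (1) I would take $C\subseteq X^2$ with $\bar C\cap\Delta_{\partial X}=\emptyset$ and an arbitrary entourage $E$, and estimate $\overline{C\cap E}\subseteq\bar C\cap\bar E$. Intersecting with $\bar X^2\setminus X^2$, the entourage condition confines $\bar E$ there to $\Delta_{\partial X}$, which the hypothesis on $C$ then removes, so $\overline{C\cap E}\subseteq X\times X$. Being closed in the compact space $\bar X\times\bar X$ it is compact, hence its two coordinate projections $K_1,K_2$ are compact subsets of $X$; then $B:=K_1\cup K_2$ is bounded and $C\cap E\subseteq K_1\times K_2\subseteq B^2$. As $E$ was arbitrary, $C$ is a coentourage. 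Statement (2) I would then deduce from (1): if $\partial U\cap\partial V=\emptyset$ then $\overline{U\times V}=\bar U\times\bar V$ cannot contain a diagonal point $(p,p)$ with $p\in\partial X$, for that would put $p$ into both $\bar U\cap\partial X$ and $\bar V\cap\partial X$; hence $\overline{U\times V}\cap\Delta_{\partial X}=\emptyset$ and (1) applies.

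For (3) it suffices to check that the cocoarse structure detects the coarse structure, the reverse half of duality being automatic. So suppose $F\subseteq X^2$ is not an entourage; then $\bar F$ contains some $(p,q)\notin X\times X$ with $(p,q)\notin\Delta_{\partial X}$, in particular $p\neq q$. Separate $p,q$ by open sets $\tilde U\ni p$, $\tilde V\ni q$ in $\bar X$ with $\overline{\tilde U}\cap\overline{\tilde V}=\emptyset$, and set $U=\tilde U\cap X$, $V=\tilde V\cap X$; then $\partial U\cap\partial V\subseteq\overline{\tilde U}\cap\overline{\tilde V}\cap\partial X=\emptyset$, so $C:=U\times V$ is a coentourage by (2). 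It remains to see that $F\cap C$ lies in no $B^2$ with $B$ bounded, and this is the step I expect to be the main obstacle: one has to check that the offending point survives in the closure, i.e.\ $(p,q)\in\overline{F\cap(U\times V)}$. This holds because any neighbourhood $W$ of $(p,q)$ in $\bar X^2$ meets $F$ inside the open set $\tilde U\times\tilde V$ (as $(p,q)\in\bar F$), and $F\subseteq X\times X$ forces that intersection into $F\cap(U\times V)$. Thus $\overline{F\cap C}$ contains $(p,q)\notin X\times X$; but $F\cap C\subseteq B^2$ with $B$ bounded would give $\overline{F\cap C}\subseteq\overline{B^2}=\bar B\times\bar B\subseteq X\times X$ (bounded sets being relatively compact), a contradiction. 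So $C$ detects $F$ and $X$ is coarsely normal. The routine content sits in (1) and (2); the real work is this last argument, where \emph{nonempty} must be upgraded to \emph{not relatively compact} by passing to closures in the compactification.
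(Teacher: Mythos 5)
Your proof is correct, and in fact it supplies an argument where the paper gives none: the paper's entire proof of this proposition is the single word ``easy,'' so there is no authorial argument to compare yours against. The route you take is the natural one and all the key points check out: the identification of bounded sets with relatively compact sets via $\overline{B^2}=\bar B\times\bar B$ and the entourage criterion; the containment $\overline{C\cap E}\subseteq\bar C\cap\bar E$ forcing $\overline{C\cap E}\subseteq X\times X$ and hence compactness in $X$, which yields the bounding square $B^2$ in part~(1); the reduction of part~(2) to part~(1) via $\overline{U\times V}=\bar U\times\bar V$; and, most importantly, the step in part~(3) where you verify that the offending boundary point $(p,q)$ of $\bar F$ actually survives into $\overline{F\cap(U\times V)}$, which is exactly what is needed to rule out $F\cap C\subseteq B^2$ for bounded $B$. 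You are also right that only one direction of duality needs checking, since the paper declares the other direction automatic by definition. The one point worth flagging explicitly is your choice of working definition: you take the entourages of the topological coarse structure to be exactly the sets whose closure meets $\bar X^2\setminus X^2$ inside $\Delta_{\partial X}$, with no separate properness axiom. Under the paper's standing hypotheses (paracompact, locally compact Hausdorff --- the paper's ``locally Hausdorff'' is surely a typo) this agrees with the reference it is drawn from, and in any case parts~(1) and~(2) only get easier if the entourage collection shrinks; but in part~(3) your argument tacitly uses that \emph{every} failure to be an entourage is a failure of the closure condition, so a sentence recording that properness is automatic here would make the proof self-contained.
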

\begin{proof}
 easy.
\end{proof}

\begin{lem}
\label{lem:coentent=coent}
Let $X$ be a coarse space. If $C\s X^2$ is a coentourage and $E\s X^2$ an entourage then $C\circ E$ and $E\circ C$ are coentourages.
\end{lem}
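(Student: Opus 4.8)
The plan is to establish the statement for $C\circ E$ first and then deduce the case of $E\circ C$ by passing to inverses. For the latter reduction, note that the coentourages are closed under taking inverses: if $C$ is a coentourage and $E'$ an entourage, then from $C\cap E'^{-1}\s B^2$ one gets $C^{-1}\cap E'=(C\cap E'^{-1})^{-1}\s B^2$, using that $B^2$ is symmetric. Since $(E\circ C)^{-1}=C^{-1}\circ E^{-1}$, with $C^{-1}$ a coentourage and $E^{-1}$ an entourage, once we know that the composite of a coentourage with an entourage (in the order coentourage-then-entourage) is a coentourage, it follows that $(E\circ C)^{-1}$ is a coentourage, and hence so is $E\circ C$.

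It remains to show that $C\circ E$ is a coentourage. Fix an arbitrary entourage $F$; we must exhibit a bounded set $B$ with $(C\circ E)\cap F\s B^2$. The key step is a decomposition trick. Suppose $(x,z)\in(C\circ E)\cap F$ and pick $y$ with $(x,y)\in C$ and $(y,z)\in E$. Then $(x,y)=(x,z)\circ(z,y)$ exhibits $(x,y)$ as an element of $F\circ E^{-1}$, so $(x,y)\in C\cap(F\circ E^{-1})$. As $F\circ E^{-1}$ is an entourage and $C$ is a coentourage, there is a bounded set $B'$ with $C\cap(F\circ E^{-1})\s (B')^2$; in particular $x,y\in B'$.

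To control $z$ as well, observe that $(y,z)\in E$ and $y\in B'$ give $z\in E^{-1}[B']$. This set is bounded: for $a,b\in E^{-1}[B']$ one can write $(a,b)$ as the composite of an element of $E^{-1}$, an element of $(B')^2$, and an element of $E$, so $(E^{-1}[B'])^2\s E^{-1}\circ (B')^2\circ E$, which is an entourage. Hence $B:=B'\cup E^{-1}[B']$ is bounded (a finite union of bounded sets is bounded since $X$ is connected, as in Lemma~\ref{lem:coentourages}), and $x,z\in B$, so $(x,z)\in B^2$. Therefore $(C\circ E)\cap F\s B^2$, and since $F$ was arbitrary, $C\circ E$ is a coentourage.

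I do not expect a genuine obstacle here; the only care needed is bookkeeping — getting the orientations of $E$ and $E^{-1}$ right in the decomposition step so that $(x,y)$ really lands in $C\cap(F\circ E^{-1})$, and remembering to invoke connectedness when assembling $B$ as a union of bounded sets. One could streamline the computation slightly by first replacing $E$ with the symmetric entourage $E\cup E^{-1}\cup\Delta_X$, which contains $E$; since subsets of coentourages are coentourages, it then suffices to treat the case of a symmetric $E$ containing the diagonal, where the neighbourhood of $B'$ one needs is simply $E[B']$ and it already contains $x$.
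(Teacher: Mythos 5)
Your proof is correct and follows essentially the same route as the paper's: both decompose a pair $(x,z)\in(C\circ E)\cap F$ through the witness $y$ so that $(x,y)\in C\cap(F\circ E^{-1})\s (B')^2$, and then enlarge $B'$ by an $E$-neighbourhood to capture $z$, yielding the bound $(E^{-1}[B']\cup B')^2$. The only addition is that you spell out the reduction of $E\circ C$ to $C\circ E$ via closure of coentourages under inverses, which the paper leaves implicit; that step is fine.
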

\begin{proof}
Let $F\s X^2$ be any entourage. Without loss of generality $E$ is symmetric and contains the diagonal. Now $C$ being a coentourage implies that there is some bounded set $B\s X$ such that
\[
C\cap E^{-1}\circ F\s B^2
\]
Then
\f{
E\circ C\cap F
&\s E\circ(C\cap E^{-1}\circ F)\\
&\s E\circ B^2\\
&\s (E[B]\cup B)^2
}
\end{proof}

If $X$ is a set a collection $\beta$ of subsets of $X$ is called a 
\emph{bornology} if
\begin{itemize}
      \item $X=\bigcup_{B\in\beta}B$;
      \item if $A\in \beta$ and $A'\s A$ then $A'\in\beta$.
      \item if $A,B\in \beta$ then $A\cup B\in\beta$.
\end{itemize}
      If $X$ is a metric space then the set of bounded subsets of $X$ are a 
bornology. We will use this property in Theorem~\ref{thm:detectcoarse}.

Now we are going to characterize coentourages axiomatically. 
\begin{thm}
\label{thm:detectcoarse}
 If $X$ is a set let $\gamma$ be a collection of subsets of $X^2$ such that
 \begin{enumerate}
  \item $\gamma$ is closed under taking subsets, finite unions and inverses;
  \item we say a subset $B\s X$ is bounded if $B\times X\in \gamma$ and require 
  \[
   X=\bigcup_{B\in \beta}B;
  \]
  \item for every $C\in \gamma$ there is some bounded set $B\s X$ such that
  \[
   C\cap \Delta_X\s B^2;
  \]
  \item If $E$ is detected by $\gamma$ and $C\in \gamma$ then $E\circ C\in\gamma$. 
 \end{enumerate}
Then $\gamma$ detects a coarse structure.
\end{thm}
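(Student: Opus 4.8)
The plan is to write down the coarse structure explicitly and then verify the five axioms, the composition axiom being the only one requiring hypothesis~4. Let $\mathcal E$ be the collection of all $E\subseteq X^2$ that are detected by $\gamma$, i.e.\ such that for every $C\in\gamma$ there is a bounded set $B$ (a set with $B\times X\in\gamma$) with $E\cap C\subseteq B^2$. By construction $\gamma$ detects $\mathcal E$, so it suffices to check that $\mathcal E$ is a coarse structure. Axiom~1, that $\Delta_X\in\mathcal E$, is precisely hypothesis~3; closure under subsets is immediate. For finite unions, given $E,E'\in\mathcal E$ and $C\in\gamma$ I would take bounded $B,B'$ with $E\cap C\subseteq B^2$, $E'\cap C\subseteq B'^2$, so that $(E\cup E')\cap C\subseteq B^2\cup B'^2\subseteq (B\cup B')^2$, and $B\cup B'$ is bounded since $(B\cup B')\times X=(B\times X)\cup(B'\times X)\in\gamma$ by hypothesis~1. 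For inverses I would use that $\gamma$ is closed under inverses (hypothesis~1): from $E\cap C^{-1}\subseteq B^2$ one gets $E^{-1}\cap C=(E\cap C^{-1})^{-1}\subseteq B^2$, as $B^2$ is symmetric.

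The substance of the proof is closure under composition, and this is where I expect the one real difficulty. Fix $E_1,E_2\in\mathcal E$ and $C\in\gamma$. The key step is the pair of ``peeling'' inclusions
\[
(E_1\circ E_2)\cap C\subseteq E_1\circ\bigl(E_2\cap(E_1^{-1}\circ C)\bigr),\qquad (E_1\circ E_2)\cap C\subseteq\bigl(E_1\cap(C\circ E_2^{-1})\bigr)\circ E_2,
\]
each obtained by taking $(x,z)\in (E_1\circ E_2)\cap C$, choosing $y$ with $(x,y)\in E_1$ and $(y,z)\in E_2$, and noting that $(y,z)\in E_1^{-1}\circ C$ through $x$, respectively $(x,y)\in C\circ E_2^{-1}$ through $z$. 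Now hypothesis~4 is exactly what makes the argument run: since $E_1^{-1}\in\mathcal E$ (closure under inverses, just shown) and $E_2\in\mathcal E$ are detected by $\gamma$, hypothesis~4 together with hypothesis~1 gives $E_1^{-1}\circ C\in\gamma$ and $E_2\circ C^{-1}\in\gamma$, hence also $C\circ E_2^{-1}=(E_2\circ C^{-1})^{-1}\in\gamma$. Applying the definition of $\mathcal E$ to $E_2$ against the coentourage $E_1^{-1}\circ C$ and to $E_1$ against $C\circ E_2^{-1}$ produces bounded $B_1,B_2$ with $E_2\cap(E_1^{-1}\circ C)\subseteq B_1^2$ and $E_1\cap(C\circ E_2^{-1})\subseteq B_2^2$. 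Substituting into the two inclusions and using $E_1\circ B_1^2=E_1[B_1]\times B_1$ and $B_2^2\circ E_2=B_2\times E_2^{-1}[B_2]$ I obtain
\[
(E_1\circ E_2)\cap C\subseteq (E_1[B_1]\times B_1)\cap(B_2\times E_2^{-1}[B_2])\subseteq B_2\times B_1\subseteq (B_1\cup B_2)^2,
\]
with $B_1\cup B_2$ bounded as above. Since $C\in\gamma$ was arbitrary this shows $E_1\circ E_2\in\mathcal E$, so $\mathcal E$ is a coarse structure detected by $\gamma$.

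Finally I would note that hypothesis~2 forces $\mathcal E$ to be connected: for $x,y\in X$ pick bounded $B_x\ni x$, $B_y\ni y$ from the cover, so $\{(x,y)\}\subseteq (B_x\cup B_y)^2$ lies in a bounded square and $\{(x,y)\}\in\mathcal E$. The step I expect to be the actual obstacle is the composition axiom: one has to spot the one-sided peeling inclusions and realise that hypothesis~4 is tailored precisely so that $E_1^{-1}\circ C$ and $C\circ E_2^{-1}$ stay in $\gamma$, which is what upgrades two one-sided boundedness estimates to boundedness of $(E_1\circ E_2)\cap C$ in both coordinates.
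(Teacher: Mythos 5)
Your proof is correct and follows the same overall strategy as the paper: list the axioms, observe that only closure under composition is nontrivial, and crack that case with the peeling inclusion $E\circ F\cap C\subseteq E\circ\bigl(F\cap (E^{-1}\circ C)\bigr)$ together with hypothesis~4, which guarantees $E^{-1}\circ C\in\gamma$. The one place you genuinely diverge is the endgame of the composition step. The paper symmetrizes $E$ (WLOG $E=E^{-1}\supseteq\Delta_X$), uses only the single left-sided peeling, and lands in $E\circ B^2\subseteq (E[B]\cup B)^2$; it then needs the auxiliary fact that $E[B]$ is bounded whenever $E$ is detected and $B$ is bounded, which it dismisses as following ``from the first point'' but which really requires another appeal to hypothesis~4 via $E[B]\times X\subseteq E\circ(B\times X)$. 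You instead intersect the two one-sided peelings, obtaining $(E_1\circ E_2)\cap C\subseteq (E_1[B_1]\times B_1)\cap(B_2\times E_2^{-1}[B_2])\subseteq B_2\times B_1$, which bounds both coordinates at once and sidesteps that auxiliary claim (and the symmetrization) entirely. Your version is slightly longer but more self-contained at exactly the point where the paper is terse; the remaining axioms, and your remark that hypothesis~2 yields connectedness, match the paper's treatment.
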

\begin{proof}
 Denote by $\beta$ the collection of bounded sets of $X$. Note that by points 1 and 2 the system $\beta$ is a bornology. Now we show that $\gamma$ detects a coarse structure by checking the axioms in Definition~\ref{defn:coarsestructure}.
 \begin{enumerate}
  \item Point 3 guarantees that the diagonal is an entourage.
  \item That is because $\beta$ is a bornology.
  \item Same.
  \item By point 1 the inverse of an entourage is an entourage.
  \item Suppose $E,F\s X^2$ are detected by $\gamma$. 
Without loss of generality $E$ 
is symmetric and contains the diagonal. Then there is some bounded set $B$ such that
\[
 F\cap \ii E\circ C\s B^2.
\]
But then
\f{
E\circ F\cap C&\s E\circ (F\cap \ii E\circ C)\\
&\s E\circ B^2\\
&\s (E[B]\cup B)^2
}
and that is bounded because of the first point.
\item this works because of point 2.
 \end{enumerate}
\end{proof}

\begin{defn}\name{coarsely disjoint}
 If $A,B\s X$ are subsets of a coarse space then $A$ is called \emph{coarsely disjoint to} $B$ if 
 \[
 A\times B\s X^2
 \]
 is a coentourage. Being coarsely disjoint is a relation on subsets of $X$.
\end{defn}

\subsection{On Maps}

Note that in this chapter every coarse space is assumed to have the property 
that the coarse structure is dual to the cocoarse structure.

\begin{lem}
Two coarse maps $f,g:X\to Y$ are close if and only if 
for every coentourage $D\s Y^2$ the set $(f\times g)^{-1}(D)$ is  
a coentourage.
\end{lem}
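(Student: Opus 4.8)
The plan is to prove both implications, writing $E_0=\{(f(x),g(x))\mid x\in X\}\s Y^2$ throughout; by definition $f$ and $g$ are close precisely when $E_0$ is an entourage of $Y$.

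For the ``only if'' direction, I would assume $E_0$ is an entourage and let $D\s Y^2$ be an arbitrary coentourage. The point is that $(f\times g)^{-1}(D)$ is contained in the ``honest'' preimage $(f^2)^{-1}(D\circ E_0^{-1})$: if $(x,x')\in(f\times g)^{-1}(D)$ then $(f(x),g(x'))\in D$, while $(g(x'),f(x'))\in E_0^{-1}$, so $(f(x),f(x'))\in D\circ E_0^{-1}$. Since $E_0$, and hence $E_0^{-1}$, is an entourage, Lemma~\ref{lem:coentent=coent} shows $D\circ E_0^{-1}$ is a coentourage; Lemma~\ref{lem:coentourages} then makes $(f^2)^{-1}(D\circ E_0^{-1})$ a coentourage, and a subset of a coentourage is again a coentourage. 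Note that this half of the argument does not use coarse normality.

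For the ``if'' direction I would argue contrapositively. Suppose $E_0$ is not an entourage of $Y$. Since $Y$ is coarsely normal its cocoarse structure detects its coarse structure, so there is a coentourage $D\s Y^2$ with $E_0\cap D\not\s B^2$ for every bounded $B\s Y$. By hypothesis $(f\times g)^{-1}(D)$ is a coentourage of $X$; intersecting it with the entourage $\Delta_X$ and using the definition of coentourage yields a bounded set $B'\s X$ with $\{x\in X\mid (f(x),g(x))\in D\}\s B'$. As $f$ and $g$ are coarsely uniform, $f(B')$ and $g(B')$ are bounded, hence so is $B:=f(B')\cup g(B')$ (here $Y$ is connected). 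Now every element of $E_0\cap D$ has the form $(f(x),g(x))$ with $(f(x),g(x))\in D$, forcing $x\in B'$ and therefore $(f(x),g(x))\in B^2$; thus $E_0\cap D\s B^2$, contradicting the choice of $D$. So $E_0$ is an entourage and $f,g$ are close.

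The step I expect to be the crux is the backward direction's use of coarse normality of $Y$: it is exactly what converts the \emph{a priori} weak statement ``$E_0$ fails to be an entourage'' into a concrete witnessing coentourage $D$ against which the preimage hypothesis can be played off. The only other mild subtlety is in the forward direction, where one must resist identifying $(f\times g)^{-1}(D)$ with $(f^2)^{-1}(D)$ and instead absorb the discrepancy between $f$ and $g$ into the factor $E_0^{-1}$; everything past that is routine manipulation with the permanence properties of (co)entourages recorded in Lemmas~\ref{lem:coentourages} and~\ref{lem:coentent=coent}.
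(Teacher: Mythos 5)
Your proof is correct, but the forward direction takes a genuinely different route from the paper's. Writing $E_0=\{(f(x),g(x)):x\in X\}$, the paper argues directly: it sets $S=(f\times g)^{-1}(D)\cap E$ for an arbitrary entourage $E\subseteq X^2$, pushes $S$ forward to land inside $D\cap (f\times g)(E)$ (an intersection of a coentourage with an entourage, hence contained in some $B^2$), and then pulls back using coarse \emph{properness} of $f$ and $g$ to bound $S$. You instead absorb the discrepancy between $f$ and $g$ into the entourage $E_0^{-1}$, obtaining $(f\times g)^{-1}(D)\subseteq (f^2)^{-1}(D\circ E_0^{-1})$, and then quote Lemma~\ref{lem:coentent=coent} together with part 3 of Lemma~\ref{lem:coentourages}. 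This is slicker and makes the logical dependencies more transparent (the only facts used are the two permanence lemmas already established), whereas the paper's version is self-contained and closer to the definitions; both are valid, and both correctly avoid coarse normality in this half. Your backward direction is essentially the paper's argument run in contrapositive form: the paper shows $E_0\cap C=(f\times g)(\Delta_X)\cap C$ is bounded for every coentourage $C$ and then (implicitly, via the standing normality assumption of that section) concludes $E_0$ is an entourage, while you first extract the witnessing coentourage $D$ from the failure of $E_0$ to be an entourage and derive a contradiction. Your identification of normality of $Y$ as the crux of that half matches the paper's standing hypothesis for the section.
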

\begin{proof}
  Denote by $\beta$ the collection of bounded sets. Suppose $f,g$ are close. Let $C\s Y^2$ be a coentourage and $E\s X^2$ an entourage. Set 
  \[
   S=\ii {(f\times g)}(C)\cap E.
  \]
  Then there is some bounded set $B$ such that
  \f{
  (f\times g)(S)
  &=(f\times g)\circ(\ii{(f\times g)}(C)\cap E)\\
  &\s (f\times g)\circ \ii{(f\times g)}(C)\cap (f\times g)(E)\\
  &\s C\cap (f\times g)(E)\\
  &\s B^2.
  }
  But $f$ and $g$ are coarsely proper thus
  \f{
   S
   &\s (\ii f \times \ii g)\circ (f\times g)(S)\\
   &\s \ii f(B)\times \ii g(B)
  }
  is in $\beta^2$.

  Now for the reverse direction: Let $C\s Y^2$ be a coentourage. There is some bounded set $B\s X^2$ such that
\[
 \Delta_X\cap \ii{(f\times g)}(C)\s B^2.
\]
Then
\f{
(f\times g)(\Delta_X)\cap C
&=(f\times g)(\Delta_X)\cap(f\times g)\circ \ii {(f\times g)}(C)\\
&=(f\times g)(\Delta_X\cap\ii{(f\times g)}(C))\\
&\s (f\times g)(B^2).
}
But $f,g$ are coarsely uniform thus $(f\times g)(B^2)\in \beta^2$. 
\end{proof}

\begin{prop}
 A map $f:X\to Y$ between coarse spaces is coarse if and only if
 \begin{itemize}
  \item for every bounded set $B\s X$ the image $f(B)$ is bounded in $Y$ 
  \item and for every coentourage $C\s Y^2$ the reverse image $\izp f C$ is a coentourage in $X$
 \end{itemize}
\end{prop}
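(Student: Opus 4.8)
The plan is to treat the two implications separately, since the forward one is essentially free. If $f$ is coarse it is in particular coarsely uniform, so for a bounded set $B\s X$ the set $f^2(B^2)=f(B)\times f(B)$ is an entourage of $Y$, i.e.\ $f(B)$ is bounded; and the pullback $\izp f C$ of a coentourage $C$ along a coarse map is again a coentourage by Lemma~\ref{lem:coentourages}. So for this direction I would just cite that lemma and record the one-line observation about bounded sets.

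For the converse, assume $f$ sends bounded sets to bounded sets and $\izp f C$ is a coentourage for every coentourage $C\s Y^2$. I would first settle coarse properness: given a bounded set $B\s Y$, Lemma~\ref{lem:bounded} makes $B^2$ a coentourage of $Y$, hence $\izp f {B^2}=f^{-1}(B)\times f^{-1}(B)$ is a coentourage of $X$, and a second application of Lemma~\ref{lem:bounded} shows $f^{-1}(B)$ is bounded. This disposes of coarse properness without any appeal to normality.

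It then remains to prove coarse uniformity: for an entourage $E\s X^2$, that $f^2(E)$ is an entourage of $Y$. Here I would use the standing hypothesis that $Y$ is coarsely normal, so that its cocoarse structure detects its coarse structure; consequently it suffices to exhibit, for every coentourage $D\s Y^2$, a bounded set $B\s Y$ with $f^2(E)\cap D\s B^2$. Given such a $D$, the set $\izp f D$ is a coentourage of $X$ by assumption, and since $E$ is an entourage there is a bounded $B_0\s X$ with $E\cap\izp f D\s B_0^2$. The elementary inclusion $f^2(E)\cap D\s f^2\bigl(E\cap\izp f D\bigr)$ — any pair in the left-hand side lifts along $f^2$ to a pair of $E$ whose image lies in $D$, hence which lies in $\izp f D$ — then gives $f^2(E)\cap D\s f^2(B_0^2)=f(B_0)\times f(B_0)$, and $f(B_0)$ is bounded by the first hypothesis; so $B=f(B_0)$ works. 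The step I expect to be the crux is precisely this reduction at the start of the paragraph: reformulating ``$f^2(E)$ is an entourage'' into something testable against coentourages is where coarse normality of the target is genuinely needed, whereas everything else is bookkeeping with the definitions of (co)entourage and with Lemmas~\ref{lem:coentourages} and~\ref{lem:bounded}.
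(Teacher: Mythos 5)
Your proof is correct and takes essentially the same route as the paper's: the forward direction cites Lemma~\ref{lem:coentourages}, coarse properness follows by testing $B^2$ against Lemma~\ref{lem:bounded}, and coarse uniformity is obtained by showing $f^2(E)$ is detected by the cocoarse structure of the (coarsely normal) target via $f^2(E)\cap D\s f^2(E\cap \izp f D)\s f(B_0)^2$. If anything you are slightly more careful than the paper, which writes that containment as an equality.
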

\begin{proof}
  Suppose $f$ is coarse. By Lemma~\ref{lem:coentourages} point 3 the second point holds and by coarsely uniformness the first point holds.

  Suppose the above holds. Let $E\s X^2$ be an entourage. For every coentourage $D\s Y^2$ there is some bounded set $B$ such that
\[
 E\cap \izp f D\s B^2.
\]
Then
\f{
\zz f(E)\cap D&=\zz f(E)\cap \zz f\circ \izp f D\\
&=\zz f(E\cap \izp f D)\\
&\s f(B)^2.
}
Because of point 1 we have $\zz f(B)\in \beta$. By point 2 the reverse image of every bounded set is bounded.
\end{proof}

\begin{defn}
\label{defn:coarselysurjective}
 A map $f:X\to Y$ between coarse spaces is called \emph{coarsely surjective} if one of the following equivalent conditions applies:
 \begin{itemize}
\item There is an entourage $E\s Y^2$ such that $E[\im f]=Y$.
 \item there is a map $r:Y\to \im f$ such that
 \[
  \{(y,r(y)):y\in Y\} 
 \]
is an entourage in $Y$.
\item The inclusion $\im f\to Y$ is a coarse equivalence.
\end{itemize}
We will refer to the above map $r$ as the retract of $Y$ to $\im f$. Note that it is a coarse equivalence.
\end{defn}

\begin{lem}
 Every coarse equivalence is coarsely surjective.
\end{lem}
\begin{proof}
 Let $f:X\to Y$ be a coarse equivalence and $g:Y\to X$ its inverse. Then $f\circ g:Y\to \im f$ is the retract of Definition~\ref{defn:coarselysurjective}.
\end{proof}

\begin{lem}
 Coarsely surjective coarse maps are epimorphisms in the category of coarse spaces and coarse maps modulo close.
\end{lem}
\begin{proof}
Suppose $f:X\to Y$ is a coarsely surjective coarse map between coarse spaces. Then there is an entourage $E\s Y^2$ such that $E[\im f]=Y$. We show $f$ is an epimorphism. Let $g_1,g_2:Y\to Z$ be two coarse maps to a coarse space such that $g_1\circ f,g_2\circ f$ are close. Then the set 
\[
H:=g_1\circ f\times g_2\circ f(\Delta_X)
\]
is an entourage. Then
\[
g_1\times g_2(\Delta_Y)\s \zzp {g_1}E\circ H\circ \zzp {g_2} {\ii E}
\]
is an entourage. Thus $g_1,g_2$ are close.
\end{proof}

\begin{defn}
\label{defn:coarselyinjective}
 A map $f:X\to Y$ between coarse spaces is called \emph{coarsely injective}\footnote{Note that every coarsely injective coarse map is called a coarse embedding. Although term 'coarse embedding' is in general use and describes the notion more appropriately we will use the former term 'coarsely injective' because adjectives are easier to handle.} if for every entourage $E\s Y^2$ the set $\izp f E$ is an entourage. 
\end{defn}

\begin{rem}
 Let $f:X\to Y$ be a map between coarse spaces. If $f$ is coarsely injective 
and maps bounded sets to bounded sets then $\zzp f C$ is a coentourage for every 
coentourage $C\s X^2$.

If on the other hand $\zz f$ maps coentourages to coentourages, the space $X$ 
is metric and $f$ is coarsely proper then $f$ is coarsely injective.
\end{rem}
\begin{proof}
Suppose $f$ is coarsely injective and maps bounded sets to bunded sets. Let $C\s X^2$ be a coentourage and $E\s Y^2$ be an entourage. Then there exists a bounded set $B\s X$ such that $C\cap \izp f E\s B^2$. Then
\f{
\zzp f {B^2}&\z \zzp f {C\cap\izp f E}\\
&=\zzp f C \cap E.
}
Since $E$ was an arbitrary entourage this implies that $\zzp f C$ is a coentourage in $Y$.

Now suppose $\zz f$ maps coentourages to coentourages, $X$ is a metric space and $f$ is coarsely proper. Let $E\s Y^2$ be an entourage and $C\s X^2$ be a coentourage. Then there exists a bounded set $B\s Y$ such that $\zzp f C \cap E\s B^2$. Then
\f{
\izp f {B^2}
&\z \izp f {\zzp f C \cap E}\\
&=\iz f\circ \zzp f C\cap \izp f E\\
&\z C\cap \izp f E.
} 
Since $C$ was an arbitrary coentourage the set $\izp f E$ is an entourage by 
Proposition~\ref{prop:metric_coarselynormal}.
\end{proof}

\begin{lem}
\label{lem:coarseequivinjective}
 Let $f:X\to Y$ be a coarse equivalence. Then $f$ is coarsely injective.
\end{lem}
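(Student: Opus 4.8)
The plan is to reduce to the notion of coarse embedding. Recall from the Remark above that a coarse map is coarsely injective if and only if it is a coarse embedding, and that a coarse equivalence is in particular a coarse map; so it suffices to verify that $f$ is a coarse embedding, i.e.\ that $f$ is coarsely uniform (which it is, being a coarse map) and that $(f^2)^{-1}(F)$ is an entourage of $X$ for every entourage $F\s Y^2$.

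First I would fix a coarse inverse: a coarse map $g:Y\to X$ with $g\circ f$ close to the identity of $X$ and $f\circ g$ close to the identity of $Y$, and set $D=\{(g(f(x)),x):x\in X\}$, which is an entourage of $X$ by closeness. The heart of the argument is the pointwise inclusion
\[
 (f^2)^{-1}(F)\s D^{-1}\circ g^2(F)\circ D .
\]
Indeed, if $(x,x')\in(f^2)^{-1}(F)$ then $(f(x),f(x'))\in F$, hence $(g(f(x)),g(f(x')))\in g^2(F)$, while $(x,g(f(x)))\in D^{-1}$ and $(g(f(x')),x')\in D$. Since $g$ is coarsely uniform, $g^2(F)$ is an entourage, and the coarse structure axioms close this under inverses, products and passage to subsets, so $(f^2)^{-1}(F)$ is an entourage. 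This makes $f$ a coarse embedding, hence coarsely injective.

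The only delicate point is bookkeeping: getting the factors $D^{-1}$ and $D$ on the correct sides of $g^2(F)$, and being clear that $g^2(F)$ is an entourage precisely because $g$ is coarsely uniform. If one prefers to argue straight from Definition~\ref{defn:coarselyinjective} without appealing to the Remark, the same entourage $D$ does the job: given a coentourage $C\s X^2$ and an entourage $E\s Y^2$, applying $g^2$ gives $g^2(f^2(C)\cap E)\s (g\circ f)^2(C)\cap g^2(E)$; here $(g\circ f)^2(C)\s D\circ C\circ D^{-1}$ is a coentourage by Lemmas~\ref{lem:coentourages} and~\ref{lem:coentent=coent}, so it meets the entourage $g^2(E)$ inside a bounded square $(B')^2$; transporting back along $f$ with the entourage $\{(f(g(y)),y):y\in Y\}$ and using that $f$ sends the bounded set $B'$ to a bounded set then confines $f^2(C)\cap E$ to a bounded square, whence $f^2(C)$ is a coentourage.
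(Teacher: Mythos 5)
Your proposal is correct, and its primary route is genuinely different from the paper's. The paper works entirely on the coentourage side: with $F=\{(g\circ f(x),x)\}$ it bounds $(g\circ f)^2(C)\s F\circ C\circ F^{-1}$, invokes Lemma~\ref{lem:coentent=coent} to see this is a coentourage, and then pulls back along $g$ via $f^2(C)\s g^{-2}\circ g^2\circ f^2(C)$ together with Lemma~\ref{lem:coentourages}(3). You instead work on the entourage side: the same closeness entourage $D$ conjugates $g^2(F)$ to give $(f^2)^{-1}(F)\s D^{-1}\circ g^2(F)\circ D$, so $f$ is a coarse embedding, and you then convert via the Remark identifying coarse embeddings with coarsely injective coarse maps. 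Your containment and its verification are correct, and this version is arguably cleaner since it needs only the coarse structure axioms and no coentourage bookkeeping; its one cost is that the Remark you lean on is itself stated in the paper without proof (the direction you need, coarse embedding $\Rightarrow$ coarsely injective, is easy and does not even require coarse normality, but it is not formally established). Your fallback argument ``straight from Definition~\ref{defn:coarselyinjective}'' is essentially the paper's own proof with the final pullback step unpacked into an explicit bounded-set chase using the entourage $\{(f(g(y)),y)\}$; it is also correct and makes the lemma self-contained modulo Lemmas~\ref{lem:coentourages} and~\ref{lem:coentent=coent}.
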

\begin{proof}
 Let $g:Y\to X$ be a coarse inverse of $f$. Then there is an entourage 
 \[
  F=\{(g\circ f(x),x):x\in X\}
 \]
 in $X$.
But then $g\circ f$ is coarsely injective because for every coentourage $C\s X^2$ we have
\[
 \zz{g\circ f}(C)\s F\circ C\circ F^{-1}
\]
and $ F\circ C\circ F^{-1}$ is again a coentourage by Lemma~\ref{lem:coentent=coent}. But
\[
 \zz f(C)\s \iz g\circ \zz g\circ \zz f(C)
\]
is a coentourage, thus $f$ is coarsely injective.
\end{proof}

\begin{lem}
 Coarsely injective coarse maps are monomorphisms in the category of coarse spaces and coarse maps modulo closeness.
\end{lem}
\begin{proof}
Suppose $f:X\to Y$ is a coarsely injective coarse map between coarse spaces. We show $f$ is a monomorphism. Let $g_1,g_2:Z\to X$ be two coarse maps such that $f\circ g_1,f\circ g_2:Z\to Y$ are close. Then
\[
H:=f\circ g_1\times f\circ g_2(\Delta_Z)
\]
is an entourage. Now
\[
g_1\times g_2(\Delta_Z)\s \izp f H
\]
is an entourage. Thus $g_1,g_2$ are close.
\end{proof}

\begin{rem}
 Every coarse map can be factored into an epimorphism followed by a monomorphism.
\end{rem}

\begin{prop}
 If a coarse map $f:X\to Y$ is coarsely surjective and coarsely injective then $f$ is a coarse equivalence.
\end{prop}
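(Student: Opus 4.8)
The plan is to exhibit an explicit coarse inverse $g\colon Y\to X$; the linchpin is that coarse injectivity (Definition~\ref{defn:coarselyinjective}), in the presence of coarse normality of $X$, forces $f$ to be a coarse embedding in the strong sense that $(f^2)^{-1}(F)$ is an entourage of $X$ for every entourage $F\s Y^2$. I would establish that auxiliary fact first. Suppose $(f^2)^{-1}(F)$ is not an entourage of $X$. Since $X$ is coarsely normal, its cocoarse structure detects its coarse structure, so there is a coentourage $C\s X^2$ with $(f^2)^{-1}(F)\cap C\not\s B^2$ for every bounded $B\s X$. Pushing forward, $f^2\big((f^2)^{-1}(F)\cap C\big)\s F\cap f^2(C)$, and $f^2(C)$ is a coentourage of $Y$ by coarse injectivity, so $F\cap f^2(C)\s B_Y^2$ for some bounded $B_Y\s Y$; pulling back and using that $f$ is coarsely proper yields $(f^2)^{-1}(F)\cap C\s f^{-1}(B_Y)^2$ with $f^{-1}(B_Y)$ bounded --- a contradiction. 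I expect this normality argument to be the only genuine obstacle; the remainder is bookkeeping.

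Next I would build $g$. Using the retract description of coarse surjectivity (Definition~\ref{defn:coarselysurjective}), fix $r\colon Y\to\im f$ with $R:=\{(y,r(y)):y\in Y\}$ an entourage of $Y$, fix a set-theoretic section $s\colon\im f\to X$ of $f$, and set $g:=s\circ r$. Then $f\circ g=r$ as a self-map of $Y$, so $\{(f(g(y)),y):y\in Y\}=R^{-1}$ is an entourage and $f\circ g$ is close to $\mathrm{id}_Y$.

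Then I would check that $g$ is a coarse map. For coarse uniformity: given an entourage $F\s Y^2$ one has $f^2(g^2(F))=(f\circ g)^2(F)\s R^{-1}\circ F\circ R$, an entourage of $Y$, hence $g^2(F)\s(f^2)^{-1}(R^{-1}\circ F\circ R)$ is an entourage of $X$ by the embedding fact above. For coarse properness: if $B\s X$ is bounded then $f(B)$ is bounded (as $f$ is coarse), and $g^{-1}(B)\s R[f(B)]$, which is bounded.

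Finally I would deduce that $g\circ f$ is close to $\mathrm{id}_X$. From $f\circ g$ close to $\mathrm{id}_Y$ we get $f\circ(g\circ f)$ close to $f=f\circ\mathrm{id}_X$, i.e. $\{(f(g(f(x))),f(x)):x\in X\}$ is an entourage of $Y$; since $\{(g(f(x)),x):x\in X\}$ lies in its preimage under $f^2$, it is an entourage of $X$ by the embedding fact. Thus $g$ is a coarse inverse to $f$, so $f$ is a coarse equivalence. (Equivalently, the last two deductions may be packaged as: coarsely injective coarse maps are monomorphisms, and $f\circ g\circ f$ being close to $f$ then forces $g\circ f$ close to $\mathrm{id}_X$.)
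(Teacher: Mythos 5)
Your proof is correct and follows essentially the same route as the paper: both construct the coarse inverse by composing the retract $r$ from the definition of coarse surjectivity with a set-theoretic section of $f$, and both lean on the fact that a coarsely injective coarse map pulls entourages back to entourages. The only differences are cosmetic --- you prove that embedding fact from coarse normality explicitly (the paper relegates it to an unproved remark), and you verify $g\circ f\sim\mathrm{id}_X$ by pulling an entourage back along $f^2$ rather than via the paper's retract-onto-$\im g$ argument.
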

\begin{proof}
 We just need to construct the coarse inverse. Note that the map $r:Y\to \im f$ from the second point of Definition~\ref{defn:coarselysurjective} is a coarse equivalence which is surjective. Without loss of generality we can replace $f$ by $\hat f=r\circ f$. Now define $g:\im f\to X$ by mapping $y\in \im f$ to some point in $\hat f^{-1}(y)$ where the choice is not important. Now we show:
\begin{enumerate}
 \item $g$ is a coarse map: Let $E\s (\im f)^2$ be an entourage. Then
 \[
  \zz g(E)\s \iz f(E)
 \]
 is an entourage. And if $B\s X$ is bounded then
 \[
  \iip g B \s f(B)
 \]
 is bounded.
 \item $\hat f\circ g=id_{\im f}$
 \item $g$ is coarsely injective: Let $D\s (\im f)^2$ be a coentourage. Then
 \[
  \zz g(D)\s \izp f D
 \]
is a coentourage.
\item $g\circ \hat f\sim id_X$: we have $g\circ \hat f: X\to \im g$ is coarsely injective and thus the retract of Definition~\ref{defn:coarselysurjective} with coarse inverse the inclusion $i:\im g \to X$. But
\[
 g\circ \hat f\circ i=id_{\im g}.
\]
\end{enumerate}
\end{proof}

\section{Limits and Colimits}
\label{sec:lac}
The category $\topology$ of topological spaces is both complete and cocomplete. In fact the forgetful functor $\topology\to \sets$ preserves all limits and colimits that is because it has both a right and left adjoint. We do something similar for coarse spaces.

Note that the following notions generalize the existing notions of product and disjoint union of coarse spaces.

\subsection{The Forgetful Functor}

 \begin{defn}
 Denote the category of connected coarse spaces and coarsely uniform maps between them by $\dcoarse$.
\end{defn}

\begin{thm}
 The forgetful functor $\eta:\dcoarse\to \sets$ preserves all limits and colimits.
\end{thm}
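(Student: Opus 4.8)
The plan is to realize $\eta$ simultaneously as a right adjoint and as a left adjoint; the theorem then follows from the general principle that right adjoints preserve all limits and left adjoints preserve all colimits. Note at the outset that the restriction of $\dcoarse$ to \emph{coarsely uniform} maps (rather than coarse maps) is exactly what makes this work: coarse properness plays no role in what follows.

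\textbf{A left adjoint $L\dashv\eta$.} To a set $S$ I assign the coarse space $L(S)=(S,\mathcal{E}_{\min})$ in which a subset $D\s S^2$ is declared an entourage precisely when $D\setminus\Delta_S$ is finite. First one checks that $\mathcal{E}_{\min}$ is a connected coarse structure: closure under subsets, finite unions and inverses is immediate, and for products one notes, writing $F_i=D_i\setminus\Delta_S$, that $(D_1\circ D_2)\setminus\Delta_S$ is contained in the finite set $F_1\cup F_2\cup(F_1\circ F_2)$; connectedness holds because every singleton $\{(x,y)\}$ lies in $\mathcal{E}_{\min}$. The essential observation — and the one place where connectedness of the objects of $\dcoarse$ is used — is that \emph{every} map of sets $f\colon S\to Y$ with $Y$ a connected coarse space is automatically coarsely uniform for $\mathcal{E}_{\min}$: for an entourage $D$ one has $f^2(D)\s\Delta_Y\cup f^2(D\setminus\Delta_S)$, which is the union of $\Delta_Y$ with a finite set and hence an entourage of $Y$. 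Therefore ``forget the coarse structure'' is a bijection, natural in both variables, between the $\dcoarse$-morphisms $L(S)\to Y$ and the set maps $S\to\eta Y$; that is, $L$ is left adjoint to $\eta$.

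\textbf{A right adjoint $\eta\dashv R$.} To a set $S$ I assign the ``indiscrete'' coarse space $R(S)=(S,\mathcal{P}(S^2))$, every subset of $S^2$ being an entourage; the five coarse-structure axioms and connectedness are trivially satisfied. Dually to the previous step, any map of sets $g\colon X\to S$ out of an arbitrary coarse space $X$ is coarsely uniform for $R(S)$, since $g^2(E)\in\mathcal{P}(S^2)$ for every entourage $E$ of $X$. Hence forgetting the coarse structure gives a natural bijection between $\dcoarse$-morphisms $X\to R(S)$ and set maps $\eta X\to S$, so $\eta$ is left adjoint to $R$.

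\textbf{Conclusion.} Having a left adjoint, $\eta$ is a right adjoint and therefore sends any limit that exists in $\dcoarse$ to a limit in $\sets$; having a right adjoint, $\eta$ is a left adjoint and therefore preserves all colimits. (In the spirit of the comparison with $\topology\to\sets$ recalled above, one can in addition verify by hand that $\dcoarse$ is complete and cocomplete, the (co)limits being computed on underlying sets and carrying the appropriate coarse structure; but the statement as phrased needs only the two adjunctions.) I foresee no genuine obstacle: the steps deserving care are the closure of $\mathcal{E}_{\min}$ under products and, conceptually, the remark that without the connectedness hypothesis the functor $L$ — and with it preservation of colimits — would break down, since a set map out of the minimal coarse structure need no longer be coarsely uniform.
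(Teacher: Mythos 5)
Your proposal is correct and follows essentially the same route as the paper: the paper's functor $\delta$ (discrete coarse structure) is your $L$ and its functor $\alpha$ (maximal coarse structure) is your $R$, and the conclusion is drawn from the same adjoint-functor principle. You merely supply more of the routine verifications (closure of $\mathcal{E}_{\min}$ under products, the role of connectedness) than the paper does.
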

\begin{proof}
There is a functor $\delta:\sets\to \dcoarse$ that sends a set $X$ to the 
coarse space $X$ with the discrete coarse structure\footnote{in which every 
entourage is the union of a subset of the diagonal and finitely many 
off-diagonal points}. Then every map of sets induces a coarsely uniform map.

There is a functor $\alpha:\sets\to \dcoarse$ which sends a set $X$ to the 
coarse space $X$ with the maximal coarse structure. Again every map of sets 
induces a coarsely uniform map.

Let $X$ be a set and $Y$ a coarse space. Then
 \[
  Hom_\sets(X,\eta Y)=Hom_\dcoarse(\delta X, Y)
 \]
and
\[
 Hom_\sets(\eta Y, X)=Hom_\dcoarse(Y,\alpha X)
\]
Thus the forgetful functor is right adjoint to $\delta$ and left adjoint to $\alpha$.

An application of the~\cite[Adjoints and Limits Theorem~2.6.10]{Weibel1994} 
gives the result.
\end{proof}

\subsection{Limits}

The following definition is a generalization of~\cite[Definition~1.21]{Grave2006}:
\begin{defn}
\label{defn:pullbackcoarsestructure}
Let $X$ be a set and $f_i:X\to Y_i$ a family of maps to coarse spaces. The 
\emph{pullback coarse structure of $(f_i)_i$} on $X$ is generated by $\bigcap_i 
\izp {f_i}{E_i}$ for $E_i\s Y_i$  an entourage for every $i$. That is, a subset 
$E\s X^2$ is an entourage if for every $i$ the set $\zzp{f_i}{E}$ is an 
entourage in $Y_i$.
\end{defn}

\begin{lem}
\label{lem:pullbackclose}
 The pullback coarse structure is indeed a coarse structure; the maps $f_i:X\to 
Y_i$ are coarsely uniform.
\end{lem}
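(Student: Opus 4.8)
The plan is to verify the five coarse-structure axioms for the pullback coarse structure directly from the characterization in Definition~\ref{defn:pullbackcoarsestructure}, namely that $E\s X^2$ is an entourage exactly when $f_i^2(E)$ is an entourage in $Y_i$ for every $i$, and then observe that coarse uniformity of the $f_i$ is immediate. The key mechanism throughout is that the square map $f_i^2=f_i\times f_i$ commutes with the set-theoretic operations appearing in the axioms — unions, passing to subsets, inverses, and products of relations — so that each axiom for the pullback structure reduces to the corresponding axiom already known to hold in each $Y_i$.

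Concretely, I would proceed axiom by axiom. First, $f_i^2(\Delta_X)\s\Delta_{Y_i}$, which is an entourage, so $\Delta_X$ is an entourage of the pullback structure. Second, if $F\s E$ and $E$ is a pullback entourage, then $f_i^2(F)\s f_i^2(E)$ is an entourage, so $F$ is too. Third, $f_i^2(E\cup F)=f_i^2(E)\cup f_i^2(F)$ is a union of two entourages, hence an entourage. Fourth, $f_i^2(E^{-1})=(f_i^2(E))^{-1}$, so the inverse of a pullback entourage is again one. Fifth, for the product I would check the inclusion $f_i^2(E_1\circ E_2)\s f_i^2(E_1)\circ f_i^2(E_2)$: if $(x,z)\in E_1\circ E_2$ via some $y$ with $(x,y)\in E_1$, $(y,z)\in E_2$, then $(f_i(x),f_i(y))\in f_i^2(E_1)$ and $(f_i(y),f_i(z))\in f_i^2(E_2)$, so $(f_i(x),f_i(z))$ lies in the composite; hence $f_i^2(E_1\circ E_2)$ is contained in a product of entourages and is therefore an entourage by axiom~5 in $Y_i$ together with the subset axiom. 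Finally, coarse uniformity of each $f_i$ is just the statement that $f_i^2$ sends pullback entourages to entourages of $Y_i$, which is the definition.

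I do not expect a genuine obstacle here: every step is one of the routine commutation identities between $f_i^2$ and a set operation, combined with an already-available axiom in the target. The only point that requires a moment's care is the product axiom, where one has an inclusion rather than an equality for $f_i^2(E_1\circ E_2)$, so one must invoke the subset axiom after applying the product axiom in $Y_i$. One might also note, for completeness, that the generated structure and the ``every $f_i^2(E)$ is an entourage'' description agree — but this is essentially immediate since the latter collection is already closed under all the operations, so it is its own closure.
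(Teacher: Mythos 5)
Your proof is correct and follows essentially the same route as the paper's: a direct axiom-by-axiom verification reducing each pullback axiom to the corresponding axiom in the targets, with the product axiom handled via the inclusion $f_i^2(E_1\circ E_2)\subseteq f_i^2(E_1)\circ f_i^2(E_2)$ plus the subset axiom. The only differences are cosmetic: the paper phrases each step through preimages (exhibiting an entourage $F\subseteq Y_i^2$ with $E\subseteq (f_i^2)^{-1}(F)$) where you use forward images $f_i^2(E)$ directly, and the paper also checks the connectedness axiom (its point 6), which you omit but which follows from the same one-line observation that $f_i^2(\{(x,y)\})$ is a single point.
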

\begin{proof}
We check the axioms of a coarse structure:
 \begin{enumerate}
  \item $\Delta_X\s \izp {f_i} {\Delta_{Y_i}}$ for every $i$.
  \item If $E\s X\times X$ is an entourage and $F\s E$ a subset then $\zzp{f_i} 
F\s \zzp {f_i} E$ is an entourage in $Y_i$ for every $i$.
  \item if $E_1, E_2$ are entourages in $X$ then for every $i$ there are entourages $F_1,F_2\s Y_i^2$ such that $E_1\s \izp{f_i}{F_1}$ and $E_2\s\izp{f_i}{F_2}$. But then 
  \f{
  E_1\cup E_2&\s \izp {f_i} {F_1} \cup \izp {f_i} {F_2}\\
  &=\izp{f_i}{F_1\cup F_2}
  }
  \item if $E$ is an entourage in $X$ then for every $i$ there is an entourage $F$ in $Y_i$ such that $E\s \izp{f_i}{F}$. But then 
\[
E^{-1} \s \izp {f_i} {F^{-1}}
\]
  \item If $E_1,E_2$ are as above then 
  \[
  E_1\circ E_2\s \izp {f_i} {F_1\circ F_2}
  \]
  \item If $(x,y)\in X$ then for every $i$ 
  \[
   \zzp{f_i}{x,y}=(f_i(x),f_i(y))
  \]
 is an entourage.
 \end{enumerate}
\end{proof}

\begin{rem}
Note that it would be ideal if the pullback coarse structure is well-defined up 
to coarse equivalence and   if there is a universal property. We cannot use 
naively the limit in $\sets$ and equip it with the pullback coarse structure as 
the following example shows:

Denote by $\phi:\Z\to \Z$ the map that maps $i\mapsto 2i$ and by $\psi:\Z\to \Z$ the map that maps $i\mapsto 2i+1$. then both $\phi,\psi$ are isomorphisms in the coarse category. The pullback of
\[
\xymatrix{
&
\Z\ar[d]^\phi\\
\Z\ar[r]_\psi
& \Z
}
\]
is $\emptyset$ in $\sets$ but should be an isomorphism if the diagram is supposed to be a pullback diagram in $\coarse$. 
\end{rem}

 \begin{prop}
 Let $X$ have the pullback coarse structure of $(f_i:X\to Y_i)_i$. A subset $C\s X^2$ is a coentourage if for every $i$ the set $\zzp{f_i}{C}$ is a coentourage in $Y_i$. Note that the converse does not hold in general.
\end{prop}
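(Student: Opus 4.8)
The goal is to show that if $f_i^2(C)$ is a coentourage in $Y_i$ for every index $i$, then $C$ is a coentourage of $X$ equipped with the pullback coarse structure. The plan is to check the defining property of coentourages head on: fix an arbitrary entourage $E\s X^2$ of the pullback structure, and produce a bounded set $B\s X$ with $C\cap E\s B^2$.

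The first step is to invoke Lemma~\ref{lem:pullbackclose}: each $f_i$ is coarsely uniform, so $f_i^2(E)$ is an entourage of $Y_i$. Since $f_i^2(C)$ is a coentourage of $Y_i$ by hypothesis, there is a bounded set $B_i\s Y_i$ with $f_i^2(C)\cap f_i^2(E)\s B_i^2$. Next, from $f_i^2(C\cap E)\s f_i^2(C)\cap f_i^2(E)\s B_i^2$ I take preimages under $f_i^2$ and use the routine identity $(f_i^2)^{-1}(B_i^2)=f_i^{-1}(B_i)^2$ to conclude $C\cap E\s f_i^{-1}(B_i)^2$ for every $i$. Intersecting over $i$ and using $\bigcap_i f_i^{-1}(B_i)^2=\big(\bigcap_i f_i^{-1}(B_i)\big)^2$ gives $C\cap E\s B^2$, where $B:=\bigcap_i f_i^{-1}(B_i)$.

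It remains to verify that $B$ is bounded in $X$, and this is the only step that takes a moment's thought, since the maps $f_i$ are merely coarsely uniform (not coarsely proper), so the individual preimages $f_i^{-1}(B_i)$ are typically unbounded. The point is not to bound these, but to observe $f_i(B)\s B_i$, whence $f_i^2(B^2)=f_i(B)^2\s B_i^2$ is an entourage of $Y_i$ for every $i$; by the very definition of the pullback coarse structure this means $B^2$ is an entourage of $X$, i.e.\ $B$ is bounded. Thus $C\cap E\s B^2$ with $B$ bounded, and as $E$ was arbitrary, $C$ is a coentourage. Finally, to substantiate the remark that the converse fails, I would record the example $X=\Z$ with $Y_1=Y_2=\Z$, $f_1=\mathrm{id}$ and $f_2(n)=|n|$: here the condition coming from $f_1$ already pins the pullback coarse structure down to the usual structure on $\Z$, the antidiagonal $C=\{(n,-n):n\in\Z\}$ is a coentourage of $X$, yet $f_2^2(C)=\{(k,k):k\geq 0\}$ meets the diagonal in an unbounded set and hence is not a coentourage of $Y_2$.
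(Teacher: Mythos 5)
Your proof is correct and is essentially the paper's argument run in the forward direction rather than contrapositively: both hinge on the observation that a subset of $X^2$ whose image under every $f_i^2$ lands in a bounded square $B_i^2$ is itself contained in $\bigl(\bigcap_i f_i^{-1}(B_i)\bigr)^2$, which is bounded by the very definition of the pullback structure. Your explicit counterexample for the failure of the converse (the antidiagonal of $\Z$ under $n\mapsto|n|$) is a welcome addition that the paper omits.
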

\begin{proof}
 Let $C\s X^2$ have the above property. If $F\s X^2$ is a subset such that
 \[
  S=C\cap F
 \]
is not bounded then there is some $i$ such that $\zzp{f_i}{S}$ is not bounded. Then
\[
 \zz{f_i}(C)\cap \zz{f_i}(F)\z \zz{f_i}(C\cap F)
\]
is not bounded but $\zzp{f_i}{C}$ is a coentourage in $Y_i$. Thus $\zzp{f_i}{F}$ is not an entourage in $Y_i$, thus $F$ does not belong to the pullback coarse structure on $X$. Thus $C$ is detected by the pullback coarse structure.
\end{proof}

\begin{ex}\name{Product}
The pullback coarse structure on products agrees with~\cite[Definition~1.32]{Grave2006}: If $X,Y$ are coarse spaces the product $X\times Y$ has the pullback coarse structure of the two projection maps $p_1,p_2$: 
\begin{itemize}
 \item A subset $E\s (X\times Y)^2$ is an entourage if and only if $\zz{p_1}(E)$ is an entourage in $X$ and $\zz{p_2}(E)$ is an entourage in $Y$.
\item A subset $C\s (X\times Y)^2$ is a coentourage if and only if $\zz{p_1}(C)$ is a coentourage in $X$ and $\zz{p_2}(C)$ is a coentourage in $Y$.
\end{itemize}
\end{ex}

\subsection{Colimits}

\begin{prop} If $f_i:Y_i\to X$ is a finite family of injective maps from coarse spaces then the subsets
 \[
  \zz{f_i}(E_i)
 \]
 for $i$ an index and $E_i\s Y_i^2$ an entourage are a subbase for a coarse structure; the maps $f_i:Y_i\to X$ are coarse maps.
\end{prop}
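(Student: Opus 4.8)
Throughout write $\gamma$ for the pushout cocoarse structure of Definition~\ref{defn:colimit}, and write $\mathcal S$ for the family of all sets $f_i^2(E_i)$ with $i\in I$ and $E_i$ an entourage of $Y_i$. The plan is to produce the coarse structure by applying Theorem~\ref{thm:detectcoarse} to $\gamma$, then to show that its entourages are exactly the ones generated by $\mathcal S$, and finally to read off that the $f_i$ are coarse maps.

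First I would check that $\gamma$ meets the four hypotheses of Theorem~\ref{thm:detectcoarse}. Closure of $\gamma$ under subsets, finite unions and inverses is inherited from the cocoarse structure of each $Y_i$ through the maps $C\mapsto\izp{f_i}{C}$ by Lemma~\ref{lem:coentourages}. Call $B\s X$ $\gamma$-bounded when $B\times X\in\gamma$, which by Lemma~\ref{lem:bounded} means every $f_i^{-1}(B)$ is bounded in $Y_i$; then singletons are $\gamma$-bounded (their $f_i$-preimages are at most a point), so the $\gamma$-bounded sets cover $X$. For any $C\in\gamma$ the diagonal footprints $f_i^{-1}\{p:(p,p)\in C\}$ are bounded in $Y_i$ by Lemma~\ref{lem:bounded}, and since they are finitely many and injectively placed they patch to a $\gamma$-bounded $B$ with $C\cap\Delta_X\s B^2$. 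Finally, if $E$ is detected by $\gamma$ and $C\in\gamma$, then $\izp{f_i}{E\circ C}\s\izp{f_i}{E}\circ\izp{f_i}{C}$ and Lemma~\ref{lem:coentent=coent} on each $Y_i$ (using that $\izp{f_i}{E}$ is then an entourage of $Y_i$, which is where coarse normality of $Y_i$ comes in) give $E\circ C\in\gamma$. Theorem~\ref{thm:detectcoarse} now yields a coarse structure $\varepsilon$ detected by $\gamma$; that $\varepsilon$ in turn detects $\gamma$ comes down to transporting the defining property of coentourages (Definition~\ref{defn:coentourages}) in each $Y_i$ along $f_i$, so $\varepsilon$ and $\gamma$ are dual.

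Next I would show $\mathcal S$ is a subbase for $\varepsilon$. Each $f_i^2(E_i)$ lies in $\varepsilon$: for $C\in\gamma$, injectivity gives $f_i^2(E_i)\cap C=f_i^2(E_i\cap\izp{f_i}{C})$, and as $\izp{f_i}{C}$ is a coentourage of $Y_i$ there is a bounded $B_i\s Y_i$ with $E_i\cap\izp{f_i}{C}\s B_i^2$, hence $f_i^2(E_i)\cap C\s f_i(B_i)^2$ with $f_i(B_i)$ $\gamma$-bounded (its $f_k$-preimage is $B_i$ for $k=i$ and, since the $f_k$ are injective with essentially disjoint images, empty for $k\neq i$). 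So $f_i^2(E_i)$ is detected by $\gamma$, and the coarse structure generated by $\mathcal S$ is contained in $\varepsilon$. For the reverse inclusion I would take $E\in\varepsilon$, show each $\izp{f_i}{E}$ is small against every coentourage of $Y_i$, hence an entourage $F_i$ of $Y_i$ by coarse normality, and then that $E$ is covered by the $f_i^2(F_i)$, so $E$ lies in the structure generated by $\mathcal S$. It remains to see the $f_i$ are coarse: coarsely uniform is immediate since $f_i^2(E_i)\in\mathcal S\s\varepsilon$, and coarsely proper follows because a bounded $B\s X$ has $B^2\in\gamma$ by duality and Lemma~\ref{lem:bounded}, so $f_i^{-1}(B)^2=\izp{f_i}{B^2}$ is a coentourage of $Y_i$ and $f_i^{-1}(B)$ is bounded by Lemma~\ref{lem:bounded}.

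The step I expect to be the main obstacle is the reverse inclusion of the previous paragraph: that $\varepsilon$ carries no entourages beyond the ones $\mathcal S$ generates, equivalently that an $E\s X^2$ with $\izp{f_i}{E}$ an entourage of each $Y_i$ is already assembled from the $f_i^2(F_i)$. This is exactly where finiteness of $I$, injectivity of the $f_i$ together with the disjointness of their images, and coarse normality of the $Y_i$ all enter; controlling how the images $f_i(Y_i)$ overlap, and identifying which $\gamma$-bounded subsets of $X$ spread over several $Y_i$, is the delicate point and is the reason the statement is at its cleanest in the coproduct (disjoint union) case that motivates it.
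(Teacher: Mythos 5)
Your route through Theorem~\ref{thm:detectcoarse} is a reasonable idea, but as written it has several genuine gaps. The most concrete one is the verification of hypothesis~4: you claim $\izp {f_i}{E\circ C}\s \izp {f_i} E\circ \izp {f_i} C$, but this inclusion is false in general. If $(f_i(a),f_i(c))\in E\circ C$, the intermediate point $y$ witnessing the composition need not lie in $\im f_i$, so the pair $(a,c)$ need not factor through $\izp{f_i}E$ and $\izp{f_i}C$. Second, you invoke coarse normality of the $Y_i$ (to upgrade ``detected by the cocoarse structure'' to ``entourage'') and disjointness of the images $f_i(Y_i)$; neither is a hypothesis of the proposition, which only assumes the $f_i$ are injective. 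Third, the step you yourself flag as the main obstacle --- that every $E$ detected by $\gamma$ is generated by the sets $f_i^2(E_i)$ --- is exactly the content you would need to supply, and it is not merely delicate: an $E$ detected by $\gamma$ can contain a bounded residual part outside $\bigcup_i(\im f_i)^2$ as well as cross terms between different images, so it is not covered by sets of the form $f_i^2(F_i)$ alone.

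The paper avoids all of this by not routing through Theorem~\ref{thm:detectcoarse}. It first shows each $f_i^2(E_i)$ is detected by the pushout cocoarse structure via the injectivity identity $\izp {f_i}{f_i^2(E_i)\cap C}=E_i\cap \izp{f_i}C$ (which you also use, correctly), then gives an explicit description of the generated structure: $E\s X^2$ is an entourage iff each $E\cap(\im f_i)^2$ is an entourage and $E\cap(\bigcup_i(\im f_i)^2)^c$ is bounded. The coarse-structure axioms are then checked directly against this description; in particular the composition axiom is handled by reducing to the case $E_1\s(\im f_i)^2$, $E_2\s(\im f_j)^2$ and observing the cross compositions land back in single images or are empty. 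If you want to salvage your approach, you would need to prove the composition hypothesis for $\gamma$ without the false containment, drop the appeals to coarse normality and disjoint images, and actually carry out the reverse inclusion --- at which point you will essentially have reconstructed the paper's explicit description.
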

\begin{proof}
 Suppose $E_i\s Y_i^2$ is an entourage. Let $C\s X^2$ be an element of the pushout cocoarse structure. Denote
 \[
  S=\zz{f_i}(E_i)\cap C.
 \]
Then 
\f{
\izp {f_i} S
&=\iz {f_i}\circ \zz{f_i}(E_i)\cap \izp{f_i} C\\
&=E_i\cap \izp {f_i} C
}
implies that $\zz{f_i}(E_i)$ is an entourage.

Now $E\s X^2$ is an entourage if for every $i$ 
\[
 E\cap (\im f_i)^2
\]
is an entourage and if $E\cap (\bigcup_i (\im f_i)^2)^c$ is bounded.

We show that this is indeed a coarse structure by checking the axioms of Definition~\ref{defn:coarsestructure}:
\begin{enumerate}
  \item We show the diagonal in $X$ is an entourage. Let $C\s X^2$ be a subset such that 
  \[
  \izp {f_i} C\s Y_i^2
  \]
  is a coentourage. Denote
\[
 S=\Delta_X\cap C.
\]
Then
\f{
\izp {f_i} {\Delta_X\cap C}
&=\izp {f_i} {\Delta_X}\cap \izp {f_i} C\\
&=\Delta_{Y_i}\cap \izp {f_i} C\\
&\s B_i^2
}
is bounded.
  \item easy
  \item easy
  \item easy
  \item If $E_1,E_2\s X^2$ have the property that for every element $C\s X^2$ of the pushout cocoarse structure and every $i$: 
\[
 \izp {f_i} {E_1}\cap \izp {f_i} C
\]
and
\[
 \izp {f_i} {E_2}\cap \izp {f_i} C
\]
are bounded in $Y_i$ we want to show that $E_1\circ E_2$ has the same property. Now without loss of generality we can
assume that there are $ij$ such that $E_1\s (\im f_i)^2$ and $E_2\s (\im f_j)^2$ the other cases being trivial or they
can be reduced to that case. Then 
\[
 E_1\circ (E_2\cap (\im f_i)^2)\s (\im f_i)^2
\]
and
\[
 (E_1\cap (\im f_j)^2)\circ E_2\s (\im f_j)^2
\]
are entourages and the other cases are empty.
  \item If $(x_1,x_2)\in X^2$ then for every $i$
  \[
   \izp {f_i}{x_1,x_2}
  \]
 is either one point or the empty set in $Y_i$, both are entourages.
 \end{enumerate}
\end{proof}

\begin{defn}
\label{defn:colimit}
 Let $X$ be a set and $f_i:Y_i\to X$ a finite family of injective maps from coarse spaces. Then define the \emph{pushout cocoarse structure} on $X$ to be those subsets $C$ of $X^2$ such that for every $i$ the set
\[
 \izp {f_i} C\s Y_i^2
\]
is a coentourage.
\end{defn}

\begin{ex}
 Let $A,B$ be coarse spaces and $A\sqcup B$ their disjoint union. The cocoarse structure and the coarse structure of $A\sqcup B$ look like this: 
\begin{itemize}
 \item A subset $D\s (A\sqcup B)^2$ is a coentourage if $D\cap A^2$ is a coentourage in $A$ and $D\cap B^2$ is a coentourage in $B$.
 \item A subset $E\s (A\sqcup B)^2$ is an entourage if $E\cap A^2$ is an entourage of $A$ and $E\cap B^2$ is an entourage of $B$ and $E\cap (A\times B\cup B\times A)$ is contained in $S\times T\cup T\times S$ where $S$ is bounded in $A$ and $T$ is bounded in $B$. This definition actually agrees with~\cite[Definition~2.12, p.~277]{Mitchener2001}. 
\end{itemize}
\end{ex}

\begin{ex}
 Let $G$ be a countable group that acts on a set $X$. We require that for every $x,y\in X$ the set 
 \[
  \{g\in G:g.x=y\}
 \]
 is finite. Then the pushout cocoarse structure of the orbit maps
 \f{
 i_x:G &\to X\\
 g &\mapsto g.x
 }
 for $x\in X$ is dual to the minimal connected $G-$invariant coarse structure of~\cite[Example~2.13]{Roe2003}.
\end{ex}
\begin{proof}
 Note that by the above requirement a subset $B\s X$ is bounded if and only if it is finite. Fix an element $x\in X$ and denote by $X'\s X$ the orbit of $x$.

 For every $C\s G^2$ coentourage
 \[
  E\cap i_x^2(C)
 \]
being bounded implies that
\[
 \izp {i_x} E\cap C\s \izp {i_x}{E\cap \zz{i_x}(C)}
\]
is bounded. Thus if $E\s X^2$ is an entourage then $\izp {i_x} E$ is an entourage.

If $\izp {i_x} E$ is an entourage then $E=\zz{i_x}\circ \izp {i_x} E$. For every $C\s G^2$ coentourage
\[
 \izp {i_x} E\cap C
\]
being bounded implies that
\[
 E\cap \zz{i_x}(C)
\]
is bounded. Thus $E$ is an entourage.

The $\zz{i_x}(E)$ for $E\s G^2$ an entourage are a coarse structure on $X'$ because $i_x$ is surjective on $X'$.

If $x,y$ are in the same orbit $X'$ then $i_x,i_y$ induce the same coarse structure on $X'$. 
\end{proof}

\section{Coarse Cohomology with twisted Coefficients}
\label{sec:ct}

We define a Grothendieck topology on coarse spaces and describe cohomology with twisted coefficients on coarse spaces and coarse maps. We have a notion of Mayer-Vietoris and a notion of relative cohomology.

\subsection{Coarse Covers}
\begin{defn}
\label{defn:coarsecover}
 Let $X$ be a coarse space and let $(U_i)_i$ be a finite family of subspaces of $X$. It is said to \emph{coarsely cover} $X$ if the complement of
\[
 \bigcup_i U_i^2
\]
is a coentourage.
\end{defn}

\begin{ex}
 The coarse space $\Z$ is coarsely covered by $\Z_-$ and $\Z_+$. An example for a decomposition that does not coarsely cover $\Z$ is $\{x\in\Z: x\mbox{ is even}\}\cup\{x\in\Z: x\mbox{ is odd}\}$.
\end{ex}

\begin{rem}
 The finiteness condition is important, otherwise $(\{x,y\})_{x,y\in X}$ would coarsely cover $X$, but if $X$ is not bounded we don't want $X$ to be covered by bounded sets only. 
\end{rem}

\begin{lem}
\label{lem:boundedcover}
 A nonbounded coarse space $X$ is coarsely covered by one element $U$ if and only if $X\ohne U$ is bounded.
\end{lem}
\begin{proof}
By definition $U$ coarsely covers $X$ if and only if $(U^2)^c$ is a coentourage; now $(U^c)^2\s (U^2)^c$ thus $U^c$ is bounded by Lemma~\ref{lem:bounded}.

Conversely, if $U^c$ is bounded then 
\[
(U^2)^c=X\times U^c\cup U^c\times X
\]
is a coentourage, thus $U$ coarsely covers $X$.
\end{proof}

\begin{rem}
 If $X$ is coarsely covered by $(U_i)_i$ and they cover $X$ (as sets) then it is the colimit (see Definition \ref{defn:colimit}) of them:
 \[
  X\cong\bigcup_i U_i
 \]
as a coarse space.
\end{rem}

This is going to be useful later:
\begin{prop}
 A finite family of subspaces $(U_i)_i$ coarsely covers a metric space $X$ if and only if for every entourage $E\s X^2$ the set
 \[
  E[U_1^c]\cap \ldots \cap E[U_n^c]
 \]
 is bounded.
\end{prop}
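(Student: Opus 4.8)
The plan is to prove the two implications separately. In each of them, enlarging the given entourage only makes the statement easier to work with, so I would assume throughout that the entourage under consideration (and, in the second part, an auxiliary entourage $F$) is symmetric and contains the diagonal $\Delta_X$. Write $C = X^2\setminus\bigcup_i U_i^2$, so that $(U_i)_i$ coarsely covers $X$ precisely when $C$ is a coentourage. The one elementary fact used in both directions is: $(x,y)\in C$ if and only if for every $i$ at least one of $x,y$ lies in $U_i^c$.

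For the direction assuming $\bigcap_i E[U_i^c]$ is bounded for every entourage $E$: fix $E$ (symmetric, $\supseteq\Delta_X$) and take $(x,y)\in C\cap E$. For each $i$, if $x\in U_i^c$ then $x\in E[U_i^c]$ (via $(x,x)\in\Delta_X\subseteq E$) and $y\in E[U_i^c]$ (via $(y,x)\in E^{-1}=E$); symmetrically if instead $y\in U_i^c$. Hence both $x$ and $y$ lie in $\bigcap_i E[U_i^c]$, so $C\cap E\subseteq\big(\bigcap_i E[U_i^c]\big)^2$, and since the right-hand set is bounded this shows $C$ is a coentourage.

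For the converse, assume $C$ is a coentourage, fix $E$, and put $A=\bigcap_i E[U_i^c]$. Instead of showing $A^2$ is an entourage directly, I would show $A^2$ is a coentourage and then conclude with Lemma~\ref{lem:bounded}. The main tool is the decomposition $C=\bigcup_{S\subseteq\{1,\dots,n\}}V_S\times V_{S^c}$ with $V_S=\bigcap_{i\in S}U_i^c$, which by Lemma~\ref{lem:coentourages} exhibits each piece $V_S\times V_{S^c}$ as a coentourage; I would also first replace $U_1$ by $U_1\cup\bigcap_i U_i^c$ (the intersection is bounded, since its square lies inside $C$ by Lemmas~\ref{lem:coentourages} and~\ref{lem:bounded}, and this enlargement costs nothing for $C$ and only shrinks $A$ up to a bounded set), so that the $U_i$ cover $X$ as a set. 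Then, given a symmetric entourage $F$ and $(x,x')\in A^2\cap F$, I would choose for each $i$ points $y_i\in U_i^c$ with $(x,y_i)\in E$ and $y_i'\in U_i^c$ with $(x',y_i')\in E$; all of the $y_i,y_j'$ are then pairwise related by the entourage $G=E\circ F\circ E$. The aim is to find among these witnesses a pair lying in one piece $V_S\times V_{S^c}\subseteq C$; intersecting that piece with $G$ and using that it is a coentourage bounds the two chosen witnesses, whence $x$ and $x'$ lie in the $E$-neighbourhood of a bounded set, which is again bounded. As $F$ was arbitrary, $A^2$ is a coentourage.

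The step I expect to be the crux is exactly locating a good witness pair in the converse direction: one must produce two of the index sets $\{k:y_i\in U_k^c\}$ and $\{k:y_j'\in U_k^c\}$ whose union is $\{1,\dots,n\}$, since that is precisely what forces the corresponding pair into $C$. For $n=2$ this is immediate — $(y_1,y_2')\in U_1^c\times U_2^c\subseteq C$ with no combinatorics at all — so the real content is the combinatorial argument (leaning on finiteness of the family, and on the reduction to the set-covering case) that should make this work in general.
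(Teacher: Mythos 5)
Your first implication is correct and clean: for a symmetric entourage $E$ containing the diagonal, any $(x,y)\in C\cap E$ does indeed have both coordinates in $\bigcap_i E[U_i^c]$, so boundedness of that intersection for all $E$ gives the coentourage property of $C$. The problem is the converse, and it sits exactly where you predicted. The combinatorial claim you need --- that among the witnesses $y_1,\dots,y_n,y_1',\dots,y_n'$ there are always two whose index sets $T_z=\{k:z\in U_k^c\}$ union to $\{1,\dots,n\}$ --- is false for $n\ge 3$. Concretely, let $X$ be a tripod (three rays $R_1,R_2,R_3$ glued at a basepoint, with the path metric) and $U_i=R_j\cup R_k$ for $\{i,j,k\}=\{1,2,3\}$, so $U_i^c$ is essentially $R_i$. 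This is a coarse cover, and $\bigcap_i E[U_i^c]$ is bounded (only points near the branch point are close to all three rays), so the proposition holds. But for $x$ near the branch point, every witness $y_i\in U_i^c$ with $(x,y_i)\in E$ has $T_{y_i}=\{i\}$ (unless it happens to be the branch point itself), so no two witnesses satisfy $T_z\cup T_w=\{1,2,3\}$ and no pair of witnesses lands in $C$. Your argument therefore cannot bound $x$, even though the conclusion is true. The pieces $V_S\times V_{S^c}$ that would save you here (e.g.\ $V_{\{1,2\}}\times V_{\{3\}}$) are not ones your witnesses ever occupy, and no choice of witnesses fixes this.

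The paper avoids the issue by induction on the number of sets in the cover: it observes that $U_1,\dots,U_n,U,V$ coarsely cover $X$ if and only if $U_1,\dots,U_n,U\cup V$ coarsely cover $X$ and $U,V$ coarsely cover $U\cup V$, and then combines the two resulting bounded sets via the inclusion $E[(U\cup V)^c]\subseteq E[U^c]\cap E[V^c]$ and the identity $E[U^c]\cap E[V^c]\cap E[(U\cup V)^c]^c= E[U^c\cap V]\cap E[V^c\cap U]$. The only place a ``two-witness'' argument is ever needed is for a two-element cover, which is precisely your $n=2$ observation and the paper's base case. So your base case is exactly right; what is missing is the reduction that merges two sets of the cover at a time rather than attacking all $n$ simultaneously. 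If you want to salvage your global setup, you would have to iterate: a failed witness pair tells you some $U_k$ contains two witnesses, and exploiting that systematically essentially reconstructs the induction.
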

\begin{rem}
 This appeared already in~\cite[Definition~2.1]{Dranishnikov1998}; wherein $U_1^c,\ldots,U_n^c$ is a finite system of subsets of $X$ that diverges.
\end{rem}
\begin{proof}
We proceed by induction on the number $i$ of pieces in the cover.
 
 If there is one piece $U_1$, then by Lemma~\ref{lem:boundedcover} one subset $U_1\s X$ coarsely covers $X$ if and only if $U_1^c$ is bounded. By this and Lemma~\ref{lem:boundedsets} for every entourage $E\s X^2$ the set $E[U_1^c]$ is bounded. 
 
 Conversely if $E[U_1^c]$ is bounded for every entourage $E\s X^2$ then $U_1^c$ itself is bounded which implies that $U_1$ coarsely covers $X$.
 
 Consider next the case of two subsets $U_1,U_2$.  We first claim that they form a coarse cover if and only if $U_1^c \times U_2^c$ is a coentourage. Indeed $X^2\ohne (U_1^2\cup U_2^2) = U_1^c \times U_2^c \cup U_2^c \times U_1^c$, so $X^2\ohne (U_1^2\cup U_2^2)$ is a coentourage if and only if both of $U_1^c \times U_2^c$ and $U_2^c \times U_1^c$ are coentourages. Let $E\s X^2$ be an entourage. Now by Lemma~\ref{lem:coentent=coent} this implies that $U_1^c\times E[U_2^c]$ is a coentourage, namely we have that the set $E[U_1^c]\cap E[U_2^c]$ is bounded.
 
 Conversely from the assumption that $E[U_1^c] \cap E[U_2^c]$ is bounded for every entourage $E\s X^2$, we deduce $E[U_1^c]\cap U_2^c$ is a bounded set. This implies that $U_1^c\times U_2^c$ is a coentourage.

 Now we consider the inductive step. Suppose $n\ge 3$. Subsets $U_1,\ldots,U_n$ form a coarse cover if and only if for every $i<j$ the sets $\{U_i\cup U_j\}\cup\{U_k:k\not=i,j\}$ form a coarse cover of $X$. Let $E$ be an entourage. By the induction hypothesis
 \[
  E[(U_i\cup U_j)^c]\cap E[U_1^c]\cap\cdots\cap \widehat {E[U_i^c]}\cap\cdots\cap \widehat{E[U_j^c]}\cap\cdots \cap E[U_n^c]
 \]
is bounded for every $i<j$. Since $E[U_i^c]\z E[U_i^c\cap U_j^c]$ we obtain
\[
 E[U_1^c]\cap \cdots\cap E[U_n^c]=\bigcap_{i<j}E[(U_i\cup U_j)^c]\cap E[U_1^c]\cap\cdots\cap \widehat {E[U_i^c]}\cap\cdots\cap \widehat{E[U_j^c]}\cap\cdots \cap E[U_n^c]
\]
is bounded.

Conversely let $U_1,\ldots,U_n\s X$ be subsets with $E[U_1^c]\cap \cdots\cap E[U_n^c]$ bounded. Then since $E[U_i^c\cap U_j^c]\s E[U_i^c]\cap E[U_j^c]$ we obtain that
\[
 E[(U_i\cup U_j)^c]\cap E[U_1^c]\cap\cdots\cap \widehat {E[U_i^c]}\cap\cdots\cap \widehat{E[U_j^c]}\cap\cdots \cap E[U_n^c]
\]
is bounded for every $i<j$. By induction hypothesis $\{U_i\cup U_j\}\cup\{U_k:k\not=i,j\}$ is a coarse cover for every $i<j$. Thus $U_1,\ldots,U_n$ is a coarse cover.

\end{proof}

\begin{prop}
\label{prop:shiftcovers}
If $r:X\to Y$ is a surjective coarse equivalence then $(V_i)_i$ is a coarse cover of $Y$ if and only if $(\iip r{V_i})_i$ is a coarse cover of $X$.
\end{prop}
\begin{proof}
Suppose $(V_i)_i$ is a coarse cover of $X$. then $(\bigcup_i V_i^2)^c$ is a coentourage in $Y$ thus
\[
\bigcup_i \iip f {V_i}^c=\izp f {(\bigcup_i V_i)^c}
\]
is a coentourage. Thus $(\iip f{V_i})_i$ is a coarse cover of $X$.

Conversely suppose $(\iip f{V_i})_i$ is a coarse cover of $X$ then 
\[
(\bigcup_i V_i)^c=\zz f\circ\izp f {(\bigcup_i V_i)^c}
\]
is a coentourage in $Y$. 
\end{proof}

\subsection{The Coarse Site}
\begin{rem}
  In what follows we define a Grothendieck topology on the category of subsets 
of a coarse space $X$. What we call a Grothendieck topology is sometimes called 
a Grothendieck pretopology. We stick to the terminology of~\cite{Artin1962}. If 
$\catc$ is a category a Grothendieck topology $T$ on $\catc$ consists of
\begin{itemize}
 \item the underlying category $Cat(T)=\catc$
 \item the set of coverings $Cov(T)$ which consists of families of morphisms in $\catc$ with a common codomain. We write
\[
 \{U_i\to U\}_i
\]
where $i$ stands for the index. They comply with the following rules:
\begin{enumerate}
 \item Every isomorphism is a covering.
 \item \emph{Local character:} If $\{U_i\to U\}_i$ is a covering and for every $i$ the family $\{V_{ij}\to U_i\}_j$ is a covering then the composition
 \[
 \{V_{ij}\to U_i\to U\}_{ij}
 \]
 is a covering.
 \item \emph{Stability under base change:} For every object $U\in Cat(T)$, morphism $V\to U$ and covering $\{U_i\to U\}_i$ all fibre products $U_i\times_U V$ exist and the family
 \[
 \{U_i\times_U V\to V\}
 \]
 is a covering.
\end{enumerate}
\end{itemize}
In the course of this paper we will mostly (but not always) apply the theory 
of Grothendieck topologies as portrayed in~\cite[parts I,II]{Tamme1994}.
 \end{rem}

\begin{defn}
\label{defn:grothendiecktopology}
 To a coarse space $X$ is associated a Grothendieck topology $\coarsetopology X$ where the underlying category of $\coarsetopology X$ consists of subsets of $X$, there is an arrow $U\to V$ if $U\s V$. A finite family $(U_i)_i$ covers $U$ if they coarsely cover $U$, that is, if
\[
 U^2\cap (\bigcup_i U_i^2)^c
\]
is a coentourage in $X$.
\end{defn}
\begin{lem}
 The construction $\coarsetopology X$, is indeed a Grothendieck topology.
\end{lem}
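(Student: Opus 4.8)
The plan is to verify the three axioms of a Grothendieck (pre)topology for $\coarsetopology X$: every isomorphism is a covering, local character (a covering of coverings is a covering), and stability under base change. Throughout, the underlying category is just the poset of subsets of $X$ ordered by inclusion, so ``isomorphism'' means equality $U = U$, and the fibre product of $U_i \to U$ and $V \to U$ is the intersection $U_i \cap V$.

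First I would check that isomorphisms are coverings: the one-element family $\{U \to U\}$ coarsely covers $U$ because $U^2 \cap (U^2)^c = \emptyset$, and the empty set is trivially a coentourage (it is a subset of $B^2$ for $B$ any bounded set, using Lemma~\ref{lem:coentourages}(2)). Next, base change: given a covering $(U_i)_i$ of $U$ and an inclusion $V \s U$, I must show $(U_i \cap V)_i$ coarsely covers $V$, i.e.\ that $V^2 \cap (\bigcup_i (U_i\cap V)^2)^c$ is a coentourage. Since $V^2 \cap (U_i \cap V)^2 = V^2 \cap U_i^2$, one computes
\[
V^2 \cap \Bigl(\bigcup_i (U_i \cap V)^2\Bigr)^c
= V^2 \cap \Bigl(\bigcup_i U_i^2\Bigr)^c
\s U^2 \cap \Bigl(\bigcup_i U_i^2\Bigr)^c,
\]
which is a coentourage by hypothesis; hence so is the smaller set by Lemma~\ref{lem:coentourages}(2).

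The main work is local character: if $(U_i)_i$ coarsely covers $U$ and, for each $i$, $(V_{ij})_j$ coarsely covers $U_i$, then $(V_{ij})_{i,j}$ coarsely covers $U$. The key set-theoretic identity is
\[
U^2 \cap \Bigl(\bigcup_{i,j} V_{ij}^2\Bigr)^c
\;\s\; \Bigl(U^2 \cap \bigcup_i \bigl(U_i^2 \cap (\textstyle\bigcup_j V_{ij}^2)^c\bigr)\Bigr)\;\cup\;\Bigl(U^2 \cap (\textstyle\bigcup_i U_i^2)^c\Bigr).
\]
Indeed a pair in the left-hand side either lies in some $U_i^2$ — then it lies in the first term since it avoids all $V_{ij}^2$ — or it lies in no $U_i^2$, landing in the second term. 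The second term is a coentourage because $(U_i)_i$ covers $U$. Each summand $U_i^2 \cap (\bigcup_j V_{ij}^2)^c$ of the first term is a coentourage because $(V_{ij})_j$ covers $U_i$; a finite union of coentourages is a coentourage by Lemma~\ref{lem:coentourages}(1), and then the subset on the left is a coentourage by Lemma~\ref{lem:coentourages}(2).

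The only real obstacle is being careful with these Boolean manipulations on subsets of $X^2$ and making sure the finiteness of the index sets is used (so that Lemma~\ref{lem:coentourages}(1) applies and the union of bounded sets appearing implicitly stays bounded, using connectedness of $X$ as in the proof of that lemma). Once the containment above is established, everything reduces to the two closure properties of coentourages already proved, so I do not expect any genuine difficulty beyond bookkeeping.
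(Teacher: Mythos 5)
Your proposal is correct and follows essentially the same route as the paper: the base-change computation is identical, and your decomposition for local character (splitting a pair according to whether it lies in some $U_i^2$ or in none) is the same two-term identity the paper uses, with the conclusion drawn from the same closure properties of coentourages in Lemma~\ref{lem:coentourages}. No substantive differences to report.
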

\begin{proof}
We check the axioms for a Grothendieck topology:
 \begin{enumerate}
  \item if $U\s X$ is a subset the identity $\{U\to U\}$ is a covering
  \item Let $\cover {U_i} U i$ be a covering and suppose for every $i $ there is a covering $\cover {U_{ij}} {U_i} j$, then:
\f{
U^2\cap(\bigcup_{ij} U_{ij}^2)^c
&=U^2\cap\bigcap_i\bigcap_jU_{ij}^{2c}\\
&=\bigcap_i(U^2\cap\bigcap_jU_{ij}^{2c})\\
&=\bigcap_i[(U^2\cap U_i^2\cap\bigcap_jU_{ij}^{2c})\cup (U^2\cap U_i^{2c}\cap\bigcap_jU_{ij}^{2c})]\\
&\s \bigcap_i [(U_i^2\cap\bigcap_jU_{ij}^{2c}) \cup (U^2\cap U_i^{2c})]\\
&\s\bigcup_i (U_i^2\cap \bigcap_j U_{ij}^{2c})\cup \bigcap_i (U^2\cap U_i^{2c})\\
&=\bigcup_i [U_i^2\cap(\bigcup_{j} U_{ij}^2)^c]\cup [U^2\cap(\bigcup_{ij} U_i^2)^c];
}
Therefore $U^2\cap (\bigcup_{ij}U^2_{i,j})^c$ is a finite union of coentourages, since the index set is finite; so it is a coentourage by Lemma~\ref{lem:coentourages}.
\item Let $\cover {U_i} U i$ be a covering and let $V\s U$ be an inclusion. Then
\f{
 V^2\cap (\bigcup_i (V \cap U_i)^2)^c 
 &=V^2\cap \bigcap_i (V\cap U_i)^{2c}\\
 &=V^2\cap \bigcap_i (U_i^{2c}\cup V^{2c})\\
 &=V^2\cap \bigcap_i U_i^{2c}\\
 &=V^2\cap (\bigcup_i U_i^2)^c\\
 &\s U^2\cap (\bigcup_i U_i^2)^c
}
So $\{V\cap U_i\to V\}_i$ is a covering of $X_{ct}$.
 \end{enumerate}
\end{proof}

\begin{rem}
If $T,T'$ are two Grothendieck topologies a functor $f:Cat(T)\to Cat(T')$ is called a \emph{morphism of topologies} if 
\begin{enumerate}
\item if $\{\varphi_i:U_i\to U\}_i$ is a covering in $T$ then $\{f(\varphi_i):f(U_i)\to f(U)\}_i$ is a covering in $T'$.
\item if $\{U_i\to U\}_i\in Cov(T)$ and $V\to U$ a morphism in $Cat(T)$ then the canonical morphism
\[
f(U_i\times_U V)\to f(U_i)\times_{f(U)}f(V)
\]
is an isomorphism for every $i$.
\end{enumerate}
\end{rem}

\begin{defn}
 Let $f:X\to Y$ be a coarse map between coarse spaces. Then we define a functor 
 \f{
 \ii f:Cat(\coarsetopology Y)&\to Cat(\coarsetopology X)\\
 U&\mapsto \iip f U
 }
\end{defn}

\begin{lem}
\label{lem:morphismoftopologies}
 The functor $\ii f$ induces a morphism of Grothendieck topologies $\ii f:\coarsetopology Y\to \coarsetopology X$.
\end{lem}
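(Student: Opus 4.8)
The plan is to verify that the functor $\ii f$ satisfies the two conditions required of a morphism of Grothendieck topologies in the sense of~\cite[parts I,II]{Tamme1994}: it must send covering families to covering families, and it must be compatible with base change (fiber products), in the sense that $\ii f(U \cap V) = \ii f(U) \cap \ii f(V)$ as subsets of $X$, which is immediate since $(f^2)^{-1}$ commutes with intersections. So the real content is the first condition. Thus I would state at the outset that the only thing to check is: if $(U_i)_i$ coarsely covers $U$ in $Y$, then $(\iip f {U_i})_i$ coarsely covers $\iip f U$ in $X$.

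To prove this, let me abbreviate $V = \iip f U$ and $V_i = \iip f {U_i}$. I must show that $V^2 \cap (\bigcup_i V_i^2)^c$ is a coentourage in $X$. First I would record the set-theoretic identity $\ii f (U^2 \cap (\bigcup_i U_i^2)^c) = V^2 \cap (\bigcup_i V_i^2)^c$, which follows because $(f^2)^{-1}$ commutes with finite unions, complements, and intersections of subsets of $Y^2$ (and $\iip f {U_i}^2 = (f^2)^{-1}(U_i^2)$, $\iip f U ^2 = (f^2)^{-1}(U^2)$). Since $(U_i)_i$ coarsely covers $U$, the set $C := U^2 \cap (\bigcup_i U_i^2)^c$ is a coentourage in $Y$ by definition. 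Now I invoke Lemma~\ref{lem:coentourages}, point 3: the preimage $\izp f C$ of a coentourage under a coarse map is a coentourage. Combining this with the identity above gives exactly that $V^2 \cap (\bigcup_i V_i^2)^c$ is a coentourage in $X$, i.e.\ $(V_i)_i$ coarsely covers $V$.

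I don't expect a genuine obstacle here; the statement is essentially bookkeeping. The one point that deserves a sentence of care is the base-change/fiber-product compatibility: in $Cat(\coarsetopology Y)$ the fiber product of $U \to W \leftarrow V$ (for $U, V \subseteq W$) is $U \cap V$, and one needs $\ii f$ to carry this to the fiber product in $Cat(\coarsetopology X)$, namely $\iip f U \cap \iip f V = \iip f {U \cap V}$ — true because preimages preserve intersections. A second minor point worth noting is that $\ii f$ is well-defined as a functor at all, i.e.\ preserves the inclusion arrows: $U \subseteq V$ in $Y$ implies $\iip f U \subseteq \iip f V$ in $X$, which is clear. With these remarks in place the proof is a two-line application of Lemma~\ref{lem:coentourages}(3).
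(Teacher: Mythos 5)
Your proposal is correct and follows exactly the paper's argument: the same set-theoretic identity $\iip f U^2\cap(\bigcup_i \iip f {U_i}^2)^c=\izp f{U^2\cap(\bigcup_i U_i^2)^c}$ combined with Lemma~\ref{lem:coentourages}(3), plus the observation that $\iip f{U\cap V}=\iip f U\cap\iip f V$ for compatibility with fiber products. You merely spell out the bookkeeping (functoriality, commutation of preimages with Boolean operations) a bit more explicitly than the paper does.
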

\begin{proof}
We check the axioms for a morphism of Grothendieck topologies:
 \begin{enumerate}
  \item Let $\cover {U_i} U i$ be a covering in $Y$. Then
\[
 \iip f U^2\cap(\bigcup_i \iip f {U_i}^2)^c=\izp f{U^2\cap(\bigcup_i U_i^2)^c}
\]
is a coentourage. Thus $\cover {\iip f {U_i}} {\iip f U} i$ is a covering in $X$.
\item for every $U,V$ subsets of $X$ we have 
\[
 \iip f{U\cap V}=\iip f U \cap \iip f V
\]
 \end{enumerate}
\end{proof}

\begin{rem}
Let $T$ be a Grothendieck topology.
\begin{itemize}
\item A \emph{presheaf on $T$ with values in $\catc$} is defined as a contravariant functor $\sheaff:Cat(T)\to \catc$. 
\item A \emph{morphism $\eta:\sheaff\to \sheafg$ of presheaves with values in $\catc$} is a natural transformation of contravariant functors.
\item A presheaf is a \emph{sheaf on $T$} if for every covering $\{U_i\to U\}\in Cov(T)$ the diagram 
\[
\sheaff(U)\to \prod_i \sheaff(U_i) \rightrightarrows \prod_{ij} \sheaff(U_i\times_U U_j)
\]
is an equalizer diagram in $\catc$. Exactness at $\sheaff(U)$ means that the first arrow $s\mapsto (s|_{U_i})_i$ is injective (\emph{global axiom}) and exactness at $\prod_i\sheaff(U_i)$ means that the image of the first arrow is equal to the kernel of the double arrow, hence consists of all $(s_i)_i$ such that $s_i|_{U_j}=s_j|_{U_i}$ (\emph{gluing axiom}).
\item A \emph{morphism of sheaves} is a morphism of the underlying presheaves.  
\end{itemize}
\end{rem}

\begin{ex}
\label{ex:bounded}
 Let $B$ be a space with the indiscrete (maximal) coarse structure. Then $B$ is already covered by the empty covering. But then the equalizer diagram for that covering is 
\[
 \sheaff(B)\to \prod_\emptyset \rightrightarrows \prod_\emptyset
\]
Thus every sheaf on $B$ vanishes.
\end{ex}

\begin{prop}\name{Sheaf of Functions}
\label{prop:gluemake}
If $X,Y$ are coarse spaces then the assignment $U\s X\mapsto$ (coarse maps $U\to Y$ modulo closeness) is a sheaf on $\coarsetopology X$.
\end{prop}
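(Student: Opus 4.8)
The plan is to verify the sheaf axioms directly for the presheaf $\sheaff(U) = \{\text{coarse maps } U \to Y\}/\!\sim$, where $\sim$ denotes closeness. First I would check that this is genuinely a presheaf on $\coarsetopology X$: if $V \s U$, restriction of a coarse map $U \to Y$ to $V$ is again a coarse map (coarse properness and coarse uniformity are inherited by subspaces), and closeness is preserved under restriction, so the restriction maps are well-defined; functoriality is immediate. The content is the gluing axiom, so for a coarse cover $(U_i)_i$ of $U$ I would argue separability (two sections agreeing on all $U_i$ agree on $U$) and then gluing (compatible families of sections glue to a global one).

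For separability: suppose $f, g : U \to Y$ are coarse maps with $f|_{U_i}$ close to $g|_{U_i}$ for each $i$, so that $D_i := \{(f(u), g(u)) : u \in U_i\}$ is an entourage of $Y$ for every $i$. I want to show $D := \{(f(u), g(u)) : u \in U\}$ is an entourage. Consider the map $h = (f \times g)|_{\Delta_U} : \Delta_U \to Y^2$, or more concretely note that $D = h(\Delta_U)$ where we identify $\Delta_U$ with $U$. Since $(U_i)_i$ coarsely covers $U$, the complement $C = U^2 \cap (\bigcup_i U_i^2)^c$ is a coentourage of $X$. The key observation is that the ``diagonal part'' $\Delta_U \cap C$ — i.e. points $u \in U$ with $u \notin U_i$ for all $i$, which would force $(u,u) \in U_i^2$ to fail — is contained in a bounded set by the coentourage property applied to the entourage $\Delta_X$ (this is the mechanism already used in Lemma~\ref{lem:bounded}). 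So $U = \bigcup_i U_i \cup B$ with $B$ bounded. Then $D \s \bigcup_i D_i \cup (f \times g)(B^2)$; since $f, g$ are coarse, $f(B)$ and $g(B)$ are bounded, so $(f\times g)(B^2) \s f(B) \times g(B)$ is an entourage, and a finite union of entourages is an entourage. Hence $D$ is an entourage and $f \sim g$.

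For gluing: given coarse maps $f_i : U_i \to Y$ with $f_i|_{U_i \cap U_j}$ close to $f_j|_{U_i \cap U_j}$ for all $i,j$, I would define $f : U \to Y$ by choosing, for each $u \in U$, some index $i$ with $u \in U_i$ (possible off a bounded set $B$ as above; on $B$ make an arbitrary choice into $Y$, which is connected) and setting $f(u) = f_i(u)$. The compatibility hypothesis ensures that any two such choices differ by something close, so the resulting $f$ is close to $f_i$ on $U_i$ up to the bounded ambiguity, hence the entourage $\{(f(u), f_i(u)) : u \in U_i\}$ is controlled, giving $f|_{U_i} \sim f_i$. It remains to check $f$ is itself a coarse map: coarse uniformity follows because an entourage $E \s U^2$ decomposes (by the coarse cover condition) into pieces inside the $U_i^2$ plus a bounded remainder, and $f^2(E)$ is then a finite union of entourages $f_i^2(E \cap U_i^2)$ and a bounded set; coarse properness follows since the preimage of a bounded set $B' \s Y$ under $f$ is contained in $\bigcup_i f_i^{-1}(B') \cup B$, a finite union of bounded sets, hence bounded.

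The main obstacle is the bookkeeping around the bounded ``defect'' set $B$: the coarse cover $(U_i)_i$ need not cover $U$ as a set, so $f$ genuinely cannot be defined pointwise by the $f_i$ alone, and one must repeatedly invoke that the non-covered part is bounded and that coarse maps send bounded sets to bounded sets. Making this precise — in particular checking that $f^2(E)$ is an entourage for an arbitrary entourage $E$, using the decomposition of $E$ furnished by the coentourage $U^2 \cap (\bigcup_i U_i^2)^c$ — is where care is needed; everything else is routine inheritance of the coarse-map axioms by subspaces and stability of entourages under finite unions.
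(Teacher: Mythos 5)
Your proposal is correct and follows essentially the same route as the paper: both verify separatedness and gluing directly, using that the part of $U$ not covered by the $U_i$ is bounded (the coentourage intersected with the diagonal), that coarse maps send bounded sets to bounded sets, and that entourages of $U$ decompose into pieces in the $U_i^2$ plus a bounded remainder. The only cosmetic difference is that the paper writes out the two-element case with a deterministic choice ($f_1$ on $U_1$, $f_2$ on $U_2\setminus U_1$, a constant on the bounded defect) while you treat a general finite cover with an arbitrary choice of index.
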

\begin{proof}
 We check the sheaf axioms:
 \begin{enumerate}
  \item global axiom: Let $f,g:U\to Y$ be two coarse maps and suppose $U$ is coarsely covered by $U_1,U_2$ and $f|_{U_1}\sim g|_{U_1}$ and $f|_{U_2}\sim g|_{U_2}$. Then 
\[
f\times g(\Delta_U)=f\times g(\Delta_{U_1})\cup f\times g(\Delta_{U_2})\cup f\times g(\Delta_{U\ohne(U_1\cup 
U_2)})
\]
The first two terms of the union are entourages because $f,g$ are close on $U_1$ and $U_2$. The last term is a entourage because $U\ohne (U_1\cup U_2)$ is bounded. Therefore $(f\times g)(\Delta_U)$ is a union of three entourages, so is itself an entourage. Thus $f,g$ are close on $U$.
\item gluing axiom: Suppose $U\s X$ is coarsely covered by $U_1,U_2$ and $f_1:U_1\to Y$ and $f_2:U_2\to Y$ are coarse maps such that
\[
 f_1|_{U_2}\sim f_2|_{U_1}.
\]
Then there is a global map $f:U\to Y$ defined in the following way:
\[
 f(x)=\begin{cases}
       f_1(x) & x\in U_1,\\
       f_2(x) & x\in U_2\ohne U_1,\\
       p & x\in U\ohne(U_1\cup U_2).
      \end{cases}
\]
Here $p$ denotes some point in $Y$. Now we show $f$ is a coarse map:

We show $f$ is coarsely uniform: If $E\s U^2$ is an entourage then
 \begin{enumerate}
 \item $\zz f(E\cap U_1^2)=\zz {f_1}(E\cap U_1^2)$ is an entourage;
 \item 
 \f{
 \zzp f{E\cap (U_1\cap U_2)\times (U_2\ohne U_1)}
 &=f_1\times f_2(E\cap (U_1\cap U_2)\times (U_2\ohne U_1))\\
 &\s f_1\times f_2(\Delta_{U_1\cap U_2})\circ \zzp {f_2}{E\cap (U_1\cap U_2)\times (U_2\ohne U_1)} 
 }
 is an entourage;
 \item $\zz f(E\cap (U_2\ohne U_1)^2)=\zz{f_2}(E\cap(U_2\ohne U_1)^2)$ is an entourage; 
 \item $E\cap U_1^c\times U_2^c$ and $E\cap U_2^c\times U_1^c$ are already bounded. Now $f$ maps bounded sets to bounded sets because $f_1,f_2$ and the constant map to $p$ do.
 \end{enumerate}
 Since
 \[
 U^2=U_1^2\cup (U_1\cap U_2)\times (U_2\ohne U_1)\cup (U_2\ohne U_1)\times (U_1\cap U_2)\cup (U_2\ohne U_1)^2\cup (U\ohne (U_1\cup U_2))^2
 \]
the set $\zzp f E$ is a finite union of entourages and therefore itself an entourage. Thus $f$ is coarsely uniform.
 
We show $f$ is coarsely proper: If $B\s Y$ is bounded then
\[
 \iip f B\s \iip {f_1} B\cup \iip {f_2} B \cup (U\ohne(U_1\cup U_2))
\]
is bounded.

Thus we showed $f$ is a coarse map.
\end{enumerate}
\end{proof}

\subsection{Sheaf Cohomology}

Sheaves on the Grothendieck topology $\coarsetopology X$ give rise to a cohomology theory on coarse spaces and coarse maps:

\begin{rem}
If $T$ is a Grothendieck topology denote by $\presheaves T$ the category of abelian presheaves on $T$ and by $\sheaves T$ the category of abelian sheaves on $T$. The category $\sheaves T$ is a full subcategory of $\presheaves T$, denote by $i:\sheaves T\to \presheaves T$ the inclusion functor. The functor $i$ is left exact by~\cite[Theorem~I.3.2.1]{Tamme1994}. If $U\in Cat(T)$ then denote by $\Gamma(U,\cdot):\presheaves T\to \abel$ the section functor which is an exact functor by~\cite[Proposition~I.2.1.1]{Tamme1994}. Then $\Gamma(U,\cdot)\circ i$ is additive and a composition of a left exact functor and an exact functor and therefore left exact. The category $\sheaves T$ is an abelian category with enough injectives therefore the right derived functor 
\[
\check H^q(U,\sheaff)=R^q(\Gamma(U,\cdot)\circ i)(\sheaff)
\]
exists for $\sheaff$ an abelian sheaf on $T$. See~\cite[Definition~I.3.3.1]{Tamme1994}.
\end{rem}

\begin{rem}\name{coarse cohomology with twisted coefficients}
 Let $\sheaff$ be a sheaf of abelian groups on a coarse space $X$, let $U\s X$ be a subset and let $q\ge 0$ be a number. Then the \emph{$q$th coarse cohomology group of $U$ with values in $\sheaff$} is
 \[
 \cohomology q U \sheaff,
\]
 the $q$th sheaf cohomology of $U$ in $\coarsetopology X$ with coefficient $\sheaff$.
\end{rem}

\begin{rem}\name{functoriality}
 Let $f: X\to Y$ be a coarse map between coarse space. There is a \emph{direct image functor} 
\f{
f_*: \sheaves {X_{ct}} &\to \sheaves {Y_{ct}}\\
\sheaff&\mapsto f_*\sheaff
}
where
\[
 f_*\sheaff(V)=\sheaff(\iip f V)
\]
for every $V\s Y$. The left adjoint functor to $f_*$ exists by \cite[Proposition~I.3.6.2]{Tamme1994} and is denoted {inverse image functor}
\[
 f^*:\sheaves Y \to \sheaves X.  
\]
 Note that $f^*$ is exact. Then there is an edge homomorphism of the Leray spectral sequence\footnote{This is~\cite[Theorem~I.3.7.6, p.~71]{Tamme1994}} of $f_*$ which will also be denoted by $f_*$: let $U\s Y$ be a subset and let $\sheaff$ be a sheaf on $X$; then there is a homomorphism
\[
 f_*:\cohomology * {\ii f U} \sheaff\to\cohomology * U {f_*\sheaff}.
\]
\end{rem}

\begin{rem}
Let $T$ be a Grothendieck topology. By~\cite[Theorem~I.3.1.1]{Tamme1994} the adjoint to the \emph{inclusion functor} $i:\sheaves T \to \presheaves T$ exists and is denoted by $\#$. If $\sheaff$ is a presheaf then $\sheaff^\#$ is the \emph{sheaf associated to the presheaf $\sheaff$}, also called the \emph{sheafification of $\sheaff$}.
 
 Define for an abelian presheaf $\sheaff$ on $T$:
 \[
 \sheaff^\nmid(U)=\lim_{\{U_i\to U\}_i\in Cov(T)}H^0(\{U_i\to U\},\sheaff)
 \]
 for $U\in Cat(T)$. Here the right side, the term $H^0(\{U_i\to U\},\sheaff)$, denotes the $0$th \v Chech cohomology associated to the covering $\{U_i\to U\}_i$ with values in $\sheaff$. The functor $\sheaff^\nmid$ is a presheaf and 
\[
\sheaff^{\#}=(\sheaff^\nmid)^\nmid
\]
is the sheaf associated to the presheaf $\sheaff$.
\end{rem}

\begin{lem} 
\label{lem:closepiso}
Let $X$ be a coarse space and denote by $p:X\times\{0,1\}\to X$ the projection to the first factor. Then
\[
 R^qp_*=0
 \]
 for $q>0$.
\end{lem}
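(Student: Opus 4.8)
The plan is to show that the higher direct images $R^q p_*$ vanish for $q>0$ by proving that they are the sheafification of the presheaves $U\mapsto \cohomology q {p^{-1}(U)} {\sheaff}$, and that each of these cohomology groups already vanishes. Since $p^{-1}(U)=U\times\{0,1\}$ is (as a coarse space) the disjoint union $U\sqcup U$, it suffices to prove that $\cohomology q {A\sqcup A} {\sheaff}=0$ for $q>0$ whenever $\sheaff$ is a sheaf on $A\sqcup A$. This is where the combinatorics of the coarse topology on a disjoint union does the work.

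First I would recall (from the excerpt's description of $R^qf_*$ via the edge homomorphism of the Leray spectral sequence, i.e.\ \cite[Theorem~I.3.7.6]{Tamme1994}) that $R^qp_*\sheaff$ is the sheaf on $X$ associated to the presheaf $U\mapsto H^q(\coarsetopology {U\times\{0,1\}};\,\sheaff|_{U\times\{0,1\}})$. So it is enough to show this presheaf is zero, i.e.\ that $\cohomology q {U\times\{0,1\}} {\sheaff}=0$ for all $U\s X$ and all $q>0$. Next I would analyze the coarse topology on $V:=U\times\{0,1\}\cong U_0\sqcup U_1$ with $U_0,U_1$ two disjoint copies of $U$. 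The key structural observation: the two "slices" $U_0$ and $U_1$ are coarsely disjoint (their product is a coentourage, by the description of the cocoarse structure on a disjoint union in the earlier Example), yet together they cover $V$ as sets; so $\{U_0,U_1\}$ is a coarse cover of $V$. Moreover any subset $W\s V$ splits as $W=(W\cap U_0)\sqcup(W\cap U_1)$, and one checks that a family coarsely covers $W$ iff its traces coarsely cover $W\cap U_0$ and $W\cap U_1$ separately; hence the site $\coarsetopology V$ decomposes as a "product" of the sites $\coarsetopology {U_0}$ and $\coarsetopology {U_1}$, and a sheaf $\sheaff$ on $V$ is the same as a pair of sheaves $\sheaff_0$ on $U_0$, $\sheaff_1$ on $U_1$ via $\sheaff(W)=\sheaff_0(W\cap U_0)\times\sheaff_1(W\cap U_1)$.

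Granting that decomposition, the Čech-to-derived-functor spectral sequence for the cover $\{U_0,U_1\}$ of $V$ has a particularly simple $E_1$ (or $E_2$) page: the only nonempty intersection among $U_0,U_1$ beyond the pieces themselves is $U_0\cap U_1=\emptyset$, and $\sheaff(\emptyset)=0$ since the empty family covers $\emptyset$ (cf.\ Example~\ref{ex:bounded}). So the Čech complex computing $\check H^*(\{U_0,U_1\};\sheaff)$ collapses, and combined with the fact that on each piece $\cohomology q {U_j} {\sheaff_j}$ contributes via the spectral sequence, one gets $\cohomology q V \sheaff\cong\cohomology q {U_0}{\sheaff_0}\oplus\cohomology q {U_1}{\sheaff_1}$ for $q\ge 1$ — but that is circular unless handled correctly. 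The cleaner route, which I would actually carry out, is: the two functors "restrict to $U_0$" and "restrict to $U_1$" give an equivalence $\sheaves V\simeq\sheaves{U_0}\times\sheaves{U_1}$, this equivalence is exact and sends injectives to pairs of injectives, and the global sections functor on $V$ is the product of the global sections functors; therefore $\cohomology q V {\sheaff}=\cohomology q {U_0}{\sheaff_0}\times\cohomology q {U_1}{\sheaff_1}$ with no spectral sequence needed. Applying this with $U_0=U_1=U$ shows $\cohomology q {U\times\{0,1\}}{\sheaff}$ is a product of two copies of $\cohomology q U {-}$; this is not itself zero, so the final—and essential—step is that it is nonetheless \emph{killed by sheafification on $X$}: the restriction map $\cohomology q {V'}\sheaff\to\cohomology q {V}\sheaff$ for $U'\subseteq U$ corresponds, under the decomposition, to the pair of ordinary restriction maps, so a section of the presheaf $R^qp_*$ over $U$ is locally (on a coarse cover of $U$) a pair of cohomology classes that restrict compatibly, and one uses that $p$ restricted over each cover piece still looks like the trivial two-fold "bundle" to conclude the stalk vanishes.

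The main obstacle I anticipate is precisely the passage from "$\cohomology q {U\times\{0,1\}}\sheaff$ is a product of two copies of sheaf cohomology of $U$" to "$R^qp_*=0$": the presheaf $U\mapsto\cohomology q{U\times\{0,1\}}\sheaff$ is visibly nonzero in general (take $\sheaff$ pulled back from $X$ and $U=X$ with $\cohomology q X {-}\ne 0$), so vanishing can only hold after sheafification, and one must argue that the obstruction class dies on a coarse cover. The trick will be that $p$ is split: the two inclusions $s_j\colon X\hookrightarrow X\times\{0,1\}$, $x\mapsto(x,j)$, are coarse sections of $p$, so $p_*$ followed by either $s_j^*$ is the identity on sheaves and, more to the point, the natural map $\sheaff\to s_{0*}s_0^*\sheaff\oplus s_{1*}s_1^*\sheaff$ is an isomorphism because $\{X\times\{0\},X\times\{1\}\}$ is a coarse cover of $X\times\{0,1\}$ with empty overlap — and $s_{j*}$, being a closed embedding of sites with complement already accounted for, is exact, so $R^qp_*\sheaff\cong R^q p_*\, s_{0*}s_0^*\sheaff\oplus R^qp_*\,s_{1*}s_1^*\sheaff$, and $p\circ s_j=\mathrm{id}_X$ forces each summand to be $R^q(\mathrm{id}_X)_* (\cdots)=0$ for $q>0$. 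I would present the argument in this last form, as it avoids sheafification bookkeeping entirely and reduces everything to: (i) $\{X\times\{0\},X\times\{1\}\}$ coarsely covers $X\times\{0,1\}$ with empty intersection, hence $\sheaff\cong\bigoplus_j s_{j*}s_j^*\sheaff$; (ii) each $s_{j*}$ is exact; (iii) $p_*\circ s_{j*}=(\mathrm{id}_X)_*$. Step (ii) — exactness of $s_{j*}$, equivalently that $s_j^*$ has an exact right adjoint, or directly that pushforward along a "coarsely clopen" embedding is exact — is the one place I expect to need a genuine (if short) argument rather than a formality.
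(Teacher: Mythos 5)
There is a genuine gap, and it is at the very first step: you identify $X\times\{0,1\}$ with the coarse disjoint union $X\sqcup X$, but that is not the coarse structure in play. In the paper's proof the set $H=\{((x,i),(x,0)):(x,i)\in X\times\{0,1\}\}$ is declared to be an entourage of $X\times\{0,1\}$; equivalently, $\{0,1\}$ carries the bounded (indiscrete) coarse structure and $X\times\{0,1\}$ is the product, so the two slices sit at distance zero from one another rather than being coarsely disjoint. Consequently $\{X\times 0,\,X\times 1\}$ is \emph{not} a coarse cover of $X\times\{0,1\}$: the complement of $(X\times 0)^2\cup(X\times 1)^2$ contains $\{((x,0),(x,1)):x\in X\}$, which is an entourage (its image under $p^2$ is $\Delta_X$) and hence not a coentourage when $X$ is unbounded. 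Your step (i), the decomposition $\sheaff\cong s_{0*}s_0^*\sheaff\oplus s_{1*}s_1^*\sheaff$, therefore fails, and everything built on it (the splitting of $R^qp_*$, and likewise the alternative route via $\sheaves{V}\simeq\sheaves{U_0}\times\sheaves{U_1}$) collapses. A sanity check that the disjoint-union reading cannot be right: the lemma is used to prove that the two sections $i_0,i_1$ of $p$ induce the same isomorphism on cohomology, whereas for a genuine coarse disjoint union one has, e.g., $\cohomology 0 {X\sqcup X}{A}=A^{2e(X)}\neq A^{e(X)}=\cohomology 0 X A$ for constant coefficients and finitely many ends, so $p$ could not even induce an isomorphism in degree $0$.

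The actual content of the lemma is the opposite of your structural observation: because the two slices are coarsely identified, every coarse cover $V_1,V_2$ of $U\times\{0,1\}$ is refined by a cover pulled back from $U$ (writing $V_i=V_i^0\times 0\cup V_i^1\times 1$, the sets $(V_i^0\cap V_i^1)\times\{0,1\}$ still coarsely cover, because $\bigl((V_1^0)^c\cup(V_1^1)^c\bigr)\times\bigl((V_2^0)^c\cup(V_2^1)^c\bigr)$ is a coentourage). From this cofinality of pulled-back covers one deduces that $p_*$ commutes with the plus-construction $\dagger$, and then $R^qp_*\sheaff\cong(p_*\sheafh^q(\sheaff))^{\#}=p_*\bigl(\sheafh^q(\sheaff)^{\#}\bigr)=0$ for $q>0$, using that $\sheafh^q(\sheaff)^{\dagger}=0$. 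Your framework of Leray/edge maps and the observation that vanishing can only happen after sheafification are fine, but the combinatorial lemma you would need is the cofinality statement above, not a clopen decomposition of the site.
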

\begin{proof}
 In a general setting if $\sheaff$ is a sheaf on a coarse space denote by $\sheafh^q(\sheaff)$ the presheaf
 \[
  U\mapsto \cohomology q U \sheaff.
 \]
 Then~\cite[Proposition~I.3.4.3]{Tamme1994} says that 
 \[
  \sheafh^q(\sheaff)^\dagger=0
 \]
 for $q>0$.

 Now~\cite[Proposition~I.3.7.1]{Tamme1994} implies that for every coarse map $f:X\to Y$ and sheaf $\sheaff$ 
on $X$
 \[
  R^qf_*(\sheaff)\cong (f_*\sheafh^q(\sheaff))^\#.
 \]

 Define
 \[
  H=\{((x,i),(x,0)):(x,i)\in X\times\{0,1\}\}\s (X\times \{0,1\})^2
 \]
 as a subset of $X\times\{0,1\}$ which is an entourage. We identify $X\times 0$ with $X$. Then $(U_i)_i$ coarsely covers $U\s X$ if and only if $(H[U_i])_i$ coarsely covers $H[U]$.
 
 Let $V_1, V_2$ be a coarse cover of $U\times\{0,1\}$. Write
 \[
  V_1=V_1^0\times 0\cup V_1^1\times 1
 \]
and 
  \[
  V_2=V_2^0\times 0\cup V_2^1\times 1.
 \]
 Note that
 \f{
 V_i^c
 &=(V_i^0\times 0)^c\cap (V_i^1\times 1)^c\\
 &=(V_i^0)^c\times 0\cup (V_i^1)^c\times 1
 }
 for $i=1,2$. But then
 \[
  ((V_1^0)^c\cup (V_1^1)^c)\times((V_2^0)^c\cup (V_2^1)^c)
 \]
 is a coentourage in $U$. Thus
 \[
  (V_1^0\cap V_1^1)\times\{0,1\}, (V_2^0\cap V_2^1)\times\{0,1\}
 \]
is a coarse cover that refines $V_1,V_2$.

 We show that $p_*$ and $\#$ commute for presheaves $\sheafg$ on $X$: Let $U\s X$ be a subset then
 \f{
 (p_*\sheafg)^\nmid(U)
 &=\lim_{\{U_i\to U\}_i\in Cov(X)}H^0(\{U_i\to U\}_i,p_*\sheafg)\\
 &=\lim_{\{U_i\to U\}_i\in Cov(X)}H^0(\{H[U_i]\to H[U]\}_i,\sheafg)\\
 &=\lim_{\{V_i\to H[U]\}_i\in Cov(X\times\{0,1\})}H^0(\{V_i\to H[U]\}_i,\sheafg)\\
 &=\sheafg^\nmid(H[U])\\
 &=p_*\sheafg^\nmid(U)
 }
\end{proof}

\begin{rem}
Note that two coarse maps $f,g:X\to Y$ are close if the map $h:X\times\{0,1\}\to Y$ agreeing with f on $X\times 0$ and with g on $X\times 1$ is a coarse map.
\end{rem}
\begin{proof}
Suppose $h$ is a coarse map we show $f,g$ are close. The set 
\f{
f\times g(\Delta_X)
&=\{f(x),g(x):x\in X\}\\
&=\{h((x,0),(x,1)):x\in X\}\\
&=\zz h(\Delta_X\times \{0,1\})
}
is an entourage in $Y$.

\end{proof}

\begin{thm}\name{close maps}
\label{thm:functor}
 If two coarse maps $f,g:X\to Y$ are close the induced homomorphisms $f_*,g_*$ of coarse cohomology with twisted coefficients are isomorphic.
\end{thm}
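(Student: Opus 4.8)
The plan is to run the coarse analogue of the usual cylinder proof of homotopy invariance, with $X\times\{0,1\}$ as a cylinder on $X$ and Lemma~\ref{lem:closepiso} supplying the invariance along the projection $p\colon X\times\{0,1\}\to X$.

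\textbf{Step 1: replace the closeness datum by a coarse map.} Define $h\colon X\times\{0,1\}\to Y$ by $h(x,0)=f(x)$ and $h(x,1)=g(x)$. That $h$ is a coarse map is routine, and the only place closeness enters: a subset of $(X\times\{0,1\})^2$ is an entourage precisely when its image under $p^2$ is an entourage of $X$ (the factor $\{0,1\}$ being bounded), so $h^2(E)$ lies in a finite union of $f^2(p^2(E))$, $g^2(p^2(E))$ and their compositions with the entourage $D=\{(f(x),g(x)):x\in X\}$, hence is an entourage of $Y$; and for $B\s Y$ bounded, $h^{-1}(B)=(f^{-1}(B)\times\{0\})\cup(g^{-1}(B)\times\{1\})$ is bounded because $f,g$ are coarsely proper and $X$ is connected. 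With $\iota_0,\iota_1\colon X\to X\times\{0,1\}$ the two (clearly coarse) inclusions one has $h\circ\iota_0=f$, $h\circ\iota_1=g$ and $p\circ\iota_0=p\circ\iota_1=\mathrm{id}_X$. By functoriality of the Leray edge homomorphism (the map denoted $f_*$ above) this gives $f_*=h_*\circ(\iota_0)_*$ and $g_*=h_*\circ(\iota_1)_*$, so it is enough to compare $(\iota_0)_*$ with $(\iota_1)_*$.

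\textbf{Step 2: feed in Lemma~\ref{lem:closepiso}.} Since $R^qp_*=0$ for $q>0$, the Leray spectral sequence $H^s(X,R^tp_*\sheafg)\Rightarrow\cohomology{s+t}{X\times\{0,1\}}{\sheafg}$ degenerates onto its bottom row, so $p_*\colon\cohomology{*}{X\times\{0,1\}}{\sheafg}\to\cohomology{*}{X}{p_*\sheafg}$ is an isomorphism for every sheaf $\sheafg$ on $X\times\{0,1\}$. Applying this to $\sheafg=(\iota_0)_*\sheaff$ and to $\sheafg=(\iota_1)_*\sheaff$, and using that $p\circ\iota_j=\mathrm{id}_X$ forces $p_*\circ(\iota_j)_*=\mathrm{id}$ on $\cohomology{*}{X}{\sheaff}$, we see that $(\iota_0)_*$ and $(\iota_1)_*$ are each the two-sided inverse of the isomorphism $p_*$; in particular both are isomorphisms, and $(\iota_1)_*\circ(\iota_0)_*^{-1}$ is the canonical comparison isomorphism $\cohomology{*}{X\times\{0,1\}}{(\iota_0)_*\sheaff}\to\cohomology{*}{X\times\{0,1\}}{(\iota_1)_*\sheaff}$. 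Transporting this along $h_*$ then identifies $f_*$ with $g_*$.

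\textbf{Main obstacle.} The delicate point hidden in ``transporting along $h_*$'' is the bookkeeping of coefficient objects: $f_*\sheaff=h_*(\iota_0)_*\sheaff$ and $g_*\sheaff=h_*(\iota_1)_*\sheaff$ are a priori distinct sheaves on $Y$, so one must make precise in what sense $f_*$ and $g_*$ are ``isomorphic'' and manufacture the required isomorphism of their target cohomology groups out of the comparison isomorphism of Step~2. I expect the clean way to organise this is to rerun the argument with inverse images: $f^*\sheafg=\iota_0^*h^*\sheafg$ and $g^*\sheafg=\iota_1^*h^*\sheafg$, and — again by Lemma~\ref{lem:closepiso}, since $p^*$ then becomes an equivalence of sheaf categories with quasi-inverse $p_*$ — one gets $\iota_0^*\cong\iota_1^*$ (both are quasi-inverse to $p^*$) and hence a natural isomorphism $f^*\sheafg\cong g^*\sheafg$ compatible with the pullback maps on cohomology. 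The remaining items, namely that $h$ is coarse and that the edge homomorphisms compose functorially, are routine.
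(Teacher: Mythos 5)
Your proposal is correct and follows essentially the same route as the paper: build the cylinder map $h:X\times\{0,1\}\to Y$ restricting to $f$ and $g$, observe that $i_0,i_1$ are sections of the projection $p$, and invoke Lemma~\ref{lem:closepiso} to make $p$ (hence both sections) induce the same isomorphism in cohomology. Your extra care about the coefficient bookkeeping and the degeneration of the Leray spectral sequence only makes explicit what the paper's two-line proof leaves implicit.
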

\begin{proof}
 Define a coarse map
 \[
  h:X\times \{0,1\}\to Y
 \]
by $h|_{X\times 0}=f$ and $h|_{X\times 1}=g$. But the inclusions $i_0:X\times 0\to X\times \{0,1\}$ and $i_1:X\times 1\to X\times \{0,1\}$ are both sections of the projection $p:X\times \{0,1\}\to X$ which by Lemma \ref{lem:closepiso} induces an isomorphism in coarse cohomology with twisted coefficients. Hence the maps $f=h\circ i_0$ and $g=h\circ i_1$ induce maps $f_*=h_*\circ i_{0*}$ and $g_*=h_*\circ i_{1*}$ which is the same map followed by isomorphisms.
\end{proof}

\begin{cor}\name{coarse equivalence}
Let $f:X\to Y$ be a coarse equivalence. Then $f$ induces an isomorphism in coarse cohomology with twisted coefficients.
\end{cor}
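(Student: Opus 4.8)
The plan is to derive this formally from the theorem on close maps together with the functoriality of the edge homomorphisms $f_*$ described above. Let $g\colon Y\to X$ be a coarse inverse of $f$, so that $g\circ f$ is close to the identity of $X$ and $f\circ g$ is close to the identity of $Y$, and fix a sheaf $\sheaff$ on $X$. Since direct image functors compose, $(g\circ f)_*=g_*\circ f_*$ and $(f\circ g)_*=f_*\circ g_*$ on sheaves, and the first thing to record is that the induced edge homomorphisms on global cohomology compose in the same way: the map $(g\circ f)_*$ on $\cohomology * X \sheaff$ is the composite of $f_*\colon\cohomology * X \sheaff\to\cohomology * Y {f_*\sheaff}$ with $g_*\colon\cohomology * Y {f_*\sheaff}\to\cohomology * X {g_*f_*\sheaff}$, and likewise, for any sheaf $\sheafg$ on $Y$, the map $(f\circ g)_*$ on $\cohomology * Y \sheafg$ is the composite of $g_*\colon\cohomology * Y \sheafg\to\cohomology * X {g_*\sheafg}$ with $f_*\colon\cohomology * X {g_*\sheafg}\to\cohomology * Y {f_*g_*\sheafg}$. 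This is the compatibility of the Leray edge map with composition of functors, which I would cite from~\cite{Tamme1994}.

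Now bring in the theorem on close maps. As $g\circ f$ is close to $id_X$, the homomorphism $(g\circ f)_*$ is isomorphic to $(id_X)_*$, which under the identification $(id_X)_*\sheaff=\sheaff$ is the identity of $\cohomology * X \sheaff$; in particular $(g\circ f)_*=g_*\circ f_*$ is an isomorphism. Hence $f_*\colon\cohomology * X \sheaff\to\cohomology * Y {f_*\sheaff}$ is injective and $g_*\colon\cohomology * Y {f_*\sheaff}\to\cohomology * X {g_*f_*\sheaff}$ is surjective. Applying the same argument to $f\circ g\sim id_Y$ with the coefficient sheaf $\sheafg=f_*\sheaff$, the map $(f\circ g)_*$ on $\cohomology * Y {f_*\sheaff}$ is an isomorphism, and by the factorization above its first factor is exactly the map $g_*\colon\cohomology * Y {f_*\sheaff}\to\cohomology * X {g_*f_*\sheaff}$; therefore this $g_*$ is also injective. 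Being both injective and surjective, $g_*\colon\cohomology * Y {f_*\sheaff}\to\cohomology * X {g_*f_*\sheaff}$ is an isomorphism, and then $f_*=g_*^{-1}\circ(g\circ f)_*$ is an isomorphism. Since $\sheaff$ was an arbitrary sheaf on $X$, $f$ induces an isomorphism in coarse cohomology with twisted coefficients.

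The elementary algebra with injective and surjective homomorphisms is routine; the part I expect to require genuine care — and the main obstacle — is the bookkeeping behind the phrase ``$(g\circ f)_*$ is isomorphic to $(id_X)_*$'': one must make the identification $(g\circ f)_*\sheaff\cong\sheaff$ coming from closeness explicit, combine it with the coefficient isomorphism produced in the proof of the theorem on close maps, and check that together they make $(g\circ f)_*$ into an honest automorphism of $\cohomology * X \sheaff$ equal to the identity. Verifying the composition law for the edge maps used in the first paragraph is of the same technical flavour. Once these two points are pinned down, the corollary follows formally.
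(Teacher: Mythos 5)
Your argument is correct and is exactly the deduction the paper intends: the corollary is stated with no proof precisely because it is the standard two-out-of-three consequence of the close-maps theorem applied to $g\circ f\sim id_X$ and $f\circ g\sim id_Y$. Your write-up in fact supplies more of the coefficient bookkeeping (tracking $f_*\sheaff$, $g_*f_*\sheaff$, and the composition law for the Leray edge maps) than the paper records, and the point you flag as delicate --- making the identification $(g\circ f)_*\sheaff\cong\sheaff$ explicit --- is indeed the only content beyond the close-maps theorem.
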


\subsection{Mayer-Vietoris Principle}
\label{subsec:MV}

In~\cite[Section~4.4, p.~24]{Shafarevich1996} a Mayer-Vietoris principle for sheaf cohomology on topological spaces is described. it can be translated directly to a Mayer-Vietoris principle for coarse spaces.

\begin{thm}\name{Mayer-Vietoris}
Let $X$ be a coarse space and $A,B$ two subsets that coarsely cover $X$. Then there is an exact sequence in cohomology
 \f{
 \cdots&\to \cohomology {i-1} {A\cap B}\sheaff\to \cohomology {i} {A\cup B}\sheaff\to\cohomology {i} 
{A}\sheaff\times\cohomology i B \sheaff\\
 &\to \cohomology {i} {A\cap B}\sheaff\to\cdots
 }
 for every sheaf $\sheaff$ on $X$.
\end{thm}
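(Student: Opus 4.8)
The plan is to carry out the argument indicated just above the statement: first produce a three–term short exact sequence of sections for flabby coefficients, then resolve an arbitrary sheaf by injectives and obtain the Mayer--Vietoris sequence as the long exact cohomology sequence of the resulting short exact sequence of cochain complexes.

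First I would record the coarse–geometric input. Since $A,B$ coarsely cover $X$, the complement $(A^2\cup B^2)^c$ is a coentourage, and $(A\cup B)^2\cap(A^2\cup B^2)^c$ is a subset of it, hence again a coentourage by Lemma~\ref{lem:coentourages}; thus $A,B$ already coarsely cover $A\cup B$, i.e.\ $\{A,B\}$ is a covering of $A\cup B$ in $\coarsetopology X$. The fibre product of $A$ and $B$ over $A\cup B$ in the poset of subsets of $X$ is $A\cap B$, so for any sheaf $\sheaff$ the sheaf axiom for this covering identifies $\sheaff(A\cup B)$ with the kernel of
\[
\varphi\colon \sheaff(A)\oplus\sheaff(B)\to\sheaff(A\cap B),\qquad \varphi(s_1,s_2)=s_1|_{A\cap B}-s_2|_{A\cap B}.
\]
If moreover $\sheaff$ is flabby, the restriction $\sheaff(A)\to\sheaff(A\cap B)$ along $A\cap B\s A$ is surjective, so every $t\in\sheaff(A\cap B)$ is $\varphi(\tilde t,0)$ for some lift $\tilde t\in\sheaff(A)$; hence $\varphi$ is onto and
\[
0\to\sheaff(A\cup B)\to\sheaff(A)\oplus\sheaff(B)\xrightarrow{\varphi}\sheaff(A\cap B)\to 0
\]
is exact.

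Next I would fix an arbitrary sheaf $\sheaff$ on $X$ together with an injective resolution $0\to\sheaff\to\sheafg^0\to\sheafg^1\to\cdots$ in $\sheaves X$. For every subset $U\s X$ the functor $\sheaff\mapsto\sheaff(U)$ on $\sheaves X$ is left exact, and $\cohomology i U\sheaff$ is its $i$th right derived functor evaluated at $\sheaff$; hence $\cohomology i U\sheaff=H^i(\sheafg^\bullet(U))$. Injective sheaves on $\coarsetopology X$ are flabby (standard sheaf theory, cf.~\cite{Tamme1994}), so the construction of the previous paragraph applies levelwise to $\sheafg^\bullet$ and the cover $\{A,B\}$ of $A\cup B$, producing a short exact sequence of cochain complexes
\[
0\to\sheafg^\bullet(A\cup B)\to\sheafg^\bullet(A)\oplus\sheafg^\bullet(B)\to\sheafg^\bullet(A\cap B)\to 0.
\]
Its long exact cohomology sequence, combined with the identification above for $U\in\{A\cup B,A,B,A\cap B\}$ and with the fact that $H^i$ commutes with finite direct sums, is precisely the asserted sequence; the connecting homomorphism $\cohomology{i-1}{A\cap B}\sheaff\to\cohomology{i}{A\cup B}\sheaff$ is the snake-lemma boundary map.

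The only real content is the flabby three–term sequence, and the step I expect to require the most care is its exactness on the right: checking that $\{A,B\}$ genuinely covers $A\cup B$ in the coarse topology (so that the sheaf and flabbiness machinery is available over $A\cup B$) and deducing surjectivity of $\varphi$ from flabbiness. Everything after that is formal homological algebra — a short exact sequence of complexes yields a long exact sequence — together with the routine facts that injective sheaves are flabby and that a flabby, equivalently injective, resolution computes $\cohomology i U\sheaff$ for every subset $U$, both of which are standard for Grothendieck topologies and can be taken from~\cite{Tamme1994}.
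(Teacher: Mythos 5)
Your proposal is correct and follows essentially the same route as the paper: the flabby three-term exact sequence of sections over the cover $\{A,B\}$ of $A\cup B$, applied levelwise to an injective (hence flabby) resolution, followed by the long exact sequence of the resulting short exact sequence of complexes. You merely make explicit two points the paper leaves implicit, namely that $A,B$ coarsely cover $A\cup B$ because subsets of coentourages are coentourages, and that injective sheaves on $\coarsetopology X$ have surjective restriction maps.
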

\begin{proof}
First note that a sheaf $\sheafg$ on a coarse space $X$ is called \emph{flabby} if the restriction map associated to an inclusion $U\to X$ is surjective for every $U\s X$. This implies that \v Cech cohomology $\cohomology q {\{U_i\to U\}_i} \sheafg =0$ for $q>0$ and every coarse cover $(U_i)_i$ of $U\s X$. Thus flabby sheaves are acyclic for coarse cohomology with twisted coefficients. Note also that every injective sheaf on a coarse space is flabby, thus given a sheaf $\sheaff$ there always exists a flabby resolution of $\sheaff$.

If $\sheafg$ is a flabby sheaf on $X$ the sequence
\[
 0\to \sheafg(A\cup B)\to \sheafg(A)\times\sheafg(B)\xrightarrow{\varphi}\sheafg(A\cap B)\to 0
\]
is an exact sequence. Here $\varphi$ sends a pair $(s_1,s_2)$ to $s_1|_{A\cap B}-s_2|_{A\cap B}$. Thus if $\sheaff$ is an arbitrary sheaf on $X$ there is an exact sequence of flabby resolutions of $\sheaff(A\cup B),\sheaff(A)\times\sheaff(B)$ and $\sheaff(A\cap B)$. This way we obtain the desired exact sequence in cohomology.
\end{proof}

\subsection{Local Cohomology}
\label{subsec:loccoh}

Let us define a version of relative cohomology for twisted coarse cohomology. There is already a similar notion for sheaf cohomology on topological spaces described in~\cite[chapter~1]{Hartshorne1967} which is called local cohomology. We do something similar:

\begin{defn}\name{support of a section}
 Let $s\in\sheaff(U)$ be a section. Then the support of $s$ is contained in $V\s U$ if 
 \[
  s|_{V^c\cap U}=0
 \]
\end{defn}

Let $X$ be a coarse space and $Z\s X$ a subspace. Then
\[
 \Gamma_Z(\sheaff):U\mapsto\ker(\sheaff(U)\to \sheaff(U\cap Z^c))
\]
is a sheaf on $X$.

\begin{thm}
 Let $Z\s X$ be a subspace of a coarse space and let $Y=X\ohne Z$. Then there is a long exact sequence
 \[
  0\to \cohomology 0 {U}{\Gamma_Z(\sheaff)}\to \cohomology 0 {U}\sheaff\to \cohomology 0 {U} 
{\sheaff|_Y}\to \cohomology 1 {U} {\Gamma_Z(\sheaff)}\to \cdots
 \]
for every subset $U\s X$ and every sheaf $\sheaff$ on $X$.
\end{thm}
\begin{proof}
 First we have an exact sequence
  \[
   0\to \Gamma_Z(\sheaff)\to \sheaff\to \sheaff|_Y
  \]
  and if $\sheaff$ is flabby we can write $0$ on the right.
 
 Let $\mathcal{I}=0\to\sheaff\to I_0\to I_1\to\cdots$ be an injective resolution of $\sheaff$. Note that every injective sheaf is flabby. Then there is an exact sequence of complexes
\[
 0\to \Gamma_Z(\mathcal I)\to \mathcal I\to \mathcal I|_Y\to 0
\]
which shows what we wanted to show.
 \end{proof}

\section{Constant Coefficients}
\label{sec:cc}

Before introducing a new coefficient for coarse cohomology with twisted 
coefficients we introduce a numerical invariant of coarse spaces which will be of interest when studying the coefficient.

\subsection{Number of Ends}

If a space is the coarse disjoint union of two subspaces we have a special case of a coarse cover. In \cite{Stallings1968} the number of ends of a group were studied; this notion can be generalized in an obvious way to coarse spaces.

This notation can also be found in \cite{Meier2008}:
\begin{defn}
 A coarse space $X$ is called \emph{one-ended} if for every coarse disjoint union $X=\bigsqcup_i U_i$ all but one of the $U_i$ are bounded.
\end{defn}

\begin{lem}
\label{lem:zplusoneended}
 The coarse space $\Z_+$, which consists of the non-negative integers, is one-ended.
\end{lem}
\begin{proof}
Suppose $\Z_+$ is the union of $U,V$ and $U,V$ are not bounded. Without loss of generality we can assume $U,V$ are a disjoint union. Now $(n)_{n\in \N}$ is a sequence where $(n)_{n\in \N}\cap U$ is not bounded and $(n)_{n\in \N}\cap V$ is not bounded.

For every $N\in \N$ there is a smallest $n\in U$ such that $n\ge N$ and there is a smallest $m\in V$ such that $m\ge N$. Without loss of generality $n$ is greater than $m$, then $(n,n-1)\in U\times V\cap E(\Z_+,1)$. Here $E(\Z_+,1)$ denotes the set of all pairs $(x,y)\in \Z_+^2$ with $d(x,y)\le 1$. This is an entourage. That way there is an infinite number of elements in
\[
 (U^2\cup V^2)^c\cap E(\Z_+,1)=(U\times V\cup V\times U)\cap E(\Z_+,1)
\]
which implies that $U,V$ are not coarsely disjoint.
\end{proof}

\begin{defn}
 Let $X$ be a coarse space. Its number of ends $\noends X$ is at least $n\ge 0$ if there is a coarse cover $(U_i)_i$ of $X$ such that $X$ is the coarse disjoint union of the $U_i$ and $n$ of the $U_i$ are not bounded.
\end{defn}

\begin{lem}
 If $A,B$ are two coarse spaces and $X=A\sqcup B$ their coarse disjoint union then
 \[
  \noends X=\noends A +\noends B.
 \]
\end{lem}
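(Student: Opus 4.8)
The plan is to prove the two inequalities $\noends X\ge\noends A+\noends B$ and $\noends X\le\noends A+\noends B$ separately, using the explicit description of the coarse and cocoarse structures of a coarse disjoint union recorded above. Recall that a coarse disjoint union decomposition $X=\bigsqcup_i U_i$ means a finite partition of $X$ into pairwise coarsely disjoint subsets; by Lemma~\ref{lem:coentourages} such a family is automatically a coarse cover (the relevant complement $X^2\cap(\bigcup_i U_i^2)^c=\bigcup_{i\ne j}U_i\times U_j$ is a finite union of coentourages), so $X$ is the colimit of the $U_i$. Two facts will be used repeatedly. First, for $S\subseteq A^2$ the set $S$ is a coentourage in $A$ if and only if it is a coentourage in $X=A\sqcup B$ — immediate from the criterion ``$D$ is a coentourage in $A\sqcup B$ iff $D\cap A^2$ and $D\cap B^2$ are coentourages'' — and hence, by Lemma~\ref{lem:bounded}, a subset of $A$ is bounded in $A$ iff it is bounded in $X$ (and symmetrically for $B$). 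Second, $A\times B$ and $B\times A$ are coentourages in $X$, since $(A\times B)\cap A^2=(A\times B)\cap B^2=\emptyset$.

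For the lower bound, choose coarse disjoint union decompositions $A=\bigsqcup_i U_i$ and $B=\bigsqcup_j V_j$ with $m$ resp.\ $n$ unbounded members, where $m,n$ are as large as the respective numbers of ends allow. Form the partition $X=\bigl(\bigsqcup_i U_i\bigr)\sqcup\bigl(\bigsqcup_j V_j\bigr)$. The $U_i$ remain pairwise coarsely disjoint in $X$ by the first fact, likewise the $V_j$, and each $U_i\times V_j\subseteq A\times B$ is a coentourage by the second fact; so this is a coarse disjoint union decomposition of $X$. By the first fact it has exactly $m+n$ unbounded members, whence $\noends X\ge m+n$. Letting $m,n$ run up to $\noends A,\noends B$ (with the convention $\infty+k=\infty$) gives $\noends X\ge\noends A+\noends B$.

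For the upper bound, let $X=\bigsqcup_k W_k$ be any coarse disjoint union decomposition with $N$ unbounded members; it suffices to show $N\le\noends A+\noends B$. Intersecting with $A$ yields a partition $A=\bigsqcup_k(W_k\cap A)$ which is pairwise coarsely disjoint, because for $k\ne l$ the set $(W_k\cap A)\times(W_l\cap A)\subseteq W_k\times W_l$ is a coentourage in $X$ contained in $A^2$, hence a coentourage in $A$ by the first fact; so this is a coarse disjoint union decomposition of $A$, and similarly $B=\bigsqcup_k(W_k\cap B)$. Let $m$ (resp.\ $n$) be the number of $k$ with $W_k\cap A$ (resp.\ $W_k\cap B$) unbounded; then $m\le\noends A$ and $n\le\noends B$. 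Finally $W_k=(W_k\cap A)\cup(W_k\cap B)$, and since a union of two bounded sets is bounded (connectedness), $W_k$ unbounded forces $W_k\cap A$ or $W_k\cap B$ unbounded; therefore $N\le m+n\le\noends A+\noends B$, and taking the supremum over all decompositions of $X$ finishes the argument.

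The verifications that the exhibited families are coarse covers and that coentourageness transfers between a subspace and the ambient disjoint union are routine from the quoted structure of $A\sqcup B$ together with Lemma~\ref{lem:coentourages}; the only care needed is the bookkeeping with the infinite cases. I do not expect a genuine obstacle here — the heart of the proof is just the two ``intersect with $A$ and with $B$, then recombine'' passages.
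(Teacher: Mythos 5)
Your proposal is correct and follows essentially the same route as the paper's own proof: for the lower bound you concatenate coarse disjoint decompositions of $A$ and $B$ into one of $X$, and for the upper bound you intersect a decomposition of $X$ with $A$ and with $B$ and observe that each unbounded piece must meet $A$ or $B$ unboundedly. You merely spell out the routine verifications (that coentourages transfer between the subspaces and the ambient disjoint union) that the paper leaves implicit.
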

\begin{proof}
 Suppose $\noends A=n$ and $\noends B=m$. Then there are coarse disjoint unions $A=A_1\sqcup\ldots \sqcup A_n$ and $B=B_1\sqcup \ldots\sqcup B_m$ with nonboundeds. But then 
\[
X=A_1\sqcup\ldots \ldots \sqcup A_n\sqcup B_1\sqcup\ldots \sqcup B_m
\]
is a coarse disjoint union with nonboundeds. Thus $\noends X\ge \noends A+\noends B$.

Suppose $\noends X=n$. Then there is a coarse disjoint cover $(U_i)_{i=1,\ldots ,n}$ with nonboundeds of $X$. Thus $(U_i\cap A)_i$ is a coarse disjoint union of $A$ and $(U_i\cap B)_i$ is a coarse disjoint union of $B$. Then for every $i$ one of $U_i\cap A$ and $U_i\cap B$ is not bounded. Thus
\[
 \noends X \le \noends A +\noends B.
\]
\end{proof}

\begin{ex}
 $\noends \Z= 2$.
\end{ex}

\begin{thm}
\label{thm:esurjective}
 Let $f:X\to Y$ be a coarsely surjective coarse map and suppose $\noends Y$ is 
finite. Then
 \[
  \noends X \ge \noends Y.
 \]
\end{thm}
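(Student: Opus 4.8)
\textbf{The plan} is to pull a cover witnessing the number of ends of $Y$ back along $f$, and to use coarse surjectivity to guarantee that none of the unbounded pieces collapses. Set $n=\noends Y$ and pick a coarse disjoint cover $(U_i)_{i\in I}$ of $Y$ with $n$ of the $U_i$ unbounded; let $S\s I$ index those unbounded pieces. Put $W_i=f^{-1}(U_i)$. Since $f$ is a function the $(W_i)_i$ still partition $X$, and for $i\ne j$ we have $W_i\times W_j=\izp f {U_i\times U_j}$. As $(U_i)_i$ coarsely covers $Y$, the set $Y^2\cap(\bigcup_k U_k^2)^c=\bigcup_{k\ne l}U_k\times U_l$ is a coentourage, so each $U_i\times U_j$ ($i\ne j$) is a coentourage by Lemma~\ref{lem:coentourages}, and hence so is $W_i\times W_j$. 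Finite unions of coentourages being coentourages, $X^2\cap(\bigcup_i W_i^2)^c=\bigcup_{i\ne j}W_i\times W_j$ is a coentourage, i.e.\ $(W_i)_i$ is a coarse disjoint cover of $X$.

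\textbf{The core step} is to show $W_i$ is unbounded whenever $i\in S$. Suppose not. Then $f(W_i)$ is bounded because $f$ is coarsely uniform, and note that $f(W_i)=\im f\cap U_i$. By coarse surjectivity choose a symmetric entourage $E\s Y^2$ containing the diagonal with $E[\im f]=Y$, so in particular $U_i\s E[\im f]$. Writing $U_i^c$ for the complement of $U_i$ in $Y$, which equals $\bigcup_{j\ne i}U_j$, the set $U_i\times U_i^c$ is a coentourage (a finite union of subsets of the coentourage above), so there is a bounded $B\s Y$ with $(U_i\times U_i^c)\cap E\s B^2$. Now for $y\in U_i$ pick $z\in\im f$ with $(y,z)\in E$: if $z\in U_i$ then $z\in f(W_i)$, so $y\in E[f(W_i)]$; if $z\notin U_i$ then $(y,z)\in(U_i\times U_i^c)\cap E\s B^2$, so $y\in B$. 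Hence $U_i\s E[f(W_i)]\cup B$. Since $f(W_i)$ and $B$ are bounded, $E[f(W_i)]$ is bounded, and their union is bounded because $Y$ is connected; this contradicts $U_i$ being unbounded.

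\textbf{Conclusion.} Thus at least $n$ of the $W_i$ are unbounded, so the coarse disjoint cover $(W_i)_i$ witnesses $\noends X\ge n=\noends Y$. (The same argument, applied to a cover witnessing $\noends Y\ge n$ for each finite $n$, shows the inequality in fact holds without assuming $\noends Y$ finite.) I expect the only genuinely delicate point to be the core step — deducing boundedness of $U_i$ from boundedness of $W_i=f^{-1}(U_i)$ — where coarse surjectivity, coarse uniformity, and the coarse disjointness of $U_i$ from $U_i^c$ all have to be combined; the rest is bookkeeping with Lemma~\ref{lem:coentourages} and the definition of a coarse cover.
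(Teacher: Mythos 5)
Your proof is correct, and it follows the same basic strategy as the paper — pull a coarse disjoint cover of $Y$ witnessing $\noends Y=n$ back along $f$ and check that no unbounded piece collapses — but the two arguments diverge in how they handle non-surjectivity. The paper factors $f$ through its image: it first treats the surjective case $X\to \im f$ (where unboundedness of $\iip f{U_i}$ is immediate, since a bounded preimage would have bounded image $U_i$ under the coarsely uniform surjection), and then separately proves $\noends{\im f}=\noends Y$ by shifting the cover with the entourage $E$ satisfying $E[\im f]=Y$, invoking Proposition~\ref{prop:shiftcovers}. You instead work with $Y$ directly and absorb both steps into your ``core step'': from $W_i$ bounded you get $\im f\cap U_i=f(W_i)$ bounded, and then the decomposition $U_i\s E[f(W_i)]\cup B$ — obtained by combining $E[\im f]=Y$ with the coentourage $U_i\times U_i^c$ — bounds $U_i$ and gives the contradiction. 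Your route is more self-contained (it bypasses Proposition~\ref{prop:shiftcovers} and the intermediate invariant $\noends{\im f}$), at the cost of a slightly more involved pointwise case analysis; the paper's route isolates the reusable fact that $\noends{\im f}=\noends Y$ for any coarsely surjective map. Both arguments implicitly use that a cover witnessing the number of ends may be taken to be a genuine set-theoretic partition (you need this for the identity $Y^2\cap(\bigcup_kU_k^2)^c=\bigcup_{k\ne l}U_k\times U_l$), which is consistent with the paper's definition of a coarse disjoint union. Your parenthetical observation that finiteness of $\noends Y$ is not actually needed for the inequality is also sound.
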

\begin{proof}

First we show that $\noends X \ge \noends{\im f}$: Regard $f$ as a surjective coarse map $X\to \im f$. Suppose that $\noends {\im f}=n$. Then $\im f$ is coarsely covered by a coarse disjoint union $(U_i)_{i=1,\ldots,n}$ where none of the $U_i$ are bounded. But then $(\iip f {U_i})_i$ is a coarse disjoint union of $X$ and because $f$ is a surjective coarse map none of the $\iip f {U_i}$ are bounded. 

Now we show that $\noends Y=\noends{\im f}$: Note that there is a surjective coarse equivalence $r:Y\to \im f$. By Proposition~\ref{prop:shiftcovers} a finite family of subsets $(U_i)_i$ is a coarse cover of $\im f$ if and only if $(\iip r {U_i})_i$ is a coarse cover of $Y$. if $(U_i)_i$ is a coarse disjoint union so is $(\iip r {U_i})_i$. 
\end{proof}

 \begin{cor}
 The number $\noends \cdot$ is a coarse invariant.
 \end{cor}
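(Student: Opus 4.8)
The claim is that the number of ends $\noends{\cdot}$ is a coarse invariant. The plan is to deduce this directly from Theorem~\ref{thm:esurjective}, which says that a coarsely surjective coarse map $f:X\to Y$ with $\noends Y$ finite satisfies $\noends X\ge\noends Y$. The key observation is that a coarse equivalence is coarsely surjective in both directions: if $f:X\to Y$ is a coarse equivalence with coarse inverse $g:Y\to X$, then both $f$ and $g$ are coarsely surjective, since $f\circ g$ is close to $\mathrm{id}_Y$ (so $f$ hits all of $Y$ up to an entourage) and $g\circ f$ is close to $\mathrm{id}_X$. This was already essentially recorded in the excerpt in the \textbf{Lemma} following Definition~\ref{defn:coarselysurjective}, which states that coarse equivalences have the coarse surjectivity property.

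First I would handle the case where both $\noends X$ and $\noends Y$ are finite. Applying Theorem~\ref{thm:esurjective} to the coarsely surjective map $f:X\to Y$ gives $\noends X\ge\noends Y$, and applying it to $g:Y\to X$ gives $\noends Y\ge\noends X$; hence $\noends X=\noends Y$. Next I would dispatch the infinite case: if, say, $\noends Y=\infty$ but $\noends X=n<\infty$, then since $f:X\to Y$ pulls back a coarse disjoint cover of $Y$ with $n+1$ unbounded pieces to a coarse disjoint cover of $X$ with $n+1$ unbounded pieces (by the same argument as in the proof of Theorem~\ref{thm:esurjective}, using coarse surjectivity of $f$ to keep the pieces unbounded and Lemma~\ref{lem:coentourages} to preserve coentourages under preimage), we would get $\noends X\ge n+1$, a contradiction. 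So $\noends X=\infty$ as well, and symmetrically.

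A cleaner route, if one wants to avoid re-running the argument, is to observe that the proof of Theorem~\ref{thm:esurjective} never really uses finiteness of $\noends Y$ for the inequality direction $\noends X\ge\noends Y$ in the sense that every coarse disjoint cover of $\im f$ with $k$ unbounded pieces pulls back to one of $X$ with $k$ unbounded pieces, regardless of $k$; finiteness was only invoked to make the statement about the single number $\noends{\cdot}$ well-formed. Combined with $\noends Y=\noends{\im f}$ from Proposition~\ref{prop:shiftcovers}, this gives the monotonicity in full generality, and then the two-sided application to $f$ and $g$ closes the argument.

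I do not expect a genuine obstacle here; this is a short deduction. The only point requiring a little care is the bookkeeping in the infinite case — making sure that ``$\noends X\ge k$ for all $k$'' is correctly extracted on both sides — but this is routine given the machinery already in place. One should also note explicitly at the outset that coarse equivalences are coarse maps (immediate from Definition~\ref{defn:coarselyequivalent}) so that Theorem~\ref{thm:esurjective} genuinely applies.
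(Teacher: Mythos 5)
Your proposal is correct and follows essentially the same route the paper intends: the corollary is stated as an immediate consequence of Theorem~\ref{thm:esurjective}, applied to a coarse equivalence and its coarse inverse, both of which are coarsely surjective coarse maps. Your extra care with the infinite-ends case (observing that the pullback argument yields $\noends X\ge k$ for every $k$, so finiteness of $\noends Y$ is not essential to the monotonicity) is a sensible and correct supplement, not a departure from the paper's approach.
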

 
\subsection{Definition}
\label{subsec:definition}
\begin{defn}
 Let $X$ be a coarse space and $A$ an abelian group. Then $A_X$ (or just $A$ if 
the space $X$ is known from context) is the sheafification of the constant 
presheaf which associates to every subspace $U\s X$ the group $A$.
\end{defn}

\begin{lem}
 A coarse disjoint union $X=U\sqcup V$ of two coarse spaces $U,V$ is a coproduct in $\coarse$.
\end{lem}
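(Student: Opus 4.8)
The plan is to verify that a coarse disjoint union $X = A \sqcup B$ satisfies the universal property of a coproduct in the category $\coarse$ of coarse spaces and coarse maps modulo closeness. First I would recall the canonical inclusion maps $\iota_A : A \to X$ and $\iota_B : B \to X$ and check these are genuine coarse maps: this follows from the explicit description of the coarse structure on $A \sqcup B$ given in the earlier example, since an entourage of $A$ restricts to (in fact equals, on $A^2$) an entourage of $X$, and bounded sets are preserved. Then, given any coarse space $Z$ together with coarse maps $\varphi : A \to Z$ and $\psi : B \to Z$, I would define $h : X \to Z$ by $h|_A = \varphi$ and $h|_B = \psi$, which is well-defined on the underlying set since $A$ and $B$ are disjoint.

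The main work is to show $h$ is a coarse map. For coarse properness: if $D \s Z$ is bounded, then $h^{-1}(D) = \varphi^{-1}(D) \cup \psi^{-1}(D)$, which is the union of a bounded set in $A$ and a bounded set in $B$, hence bounded in $X$ by the description of bounded sets in a coarse disjoint union (a set is bounded iff its intersections with $A$ and $B$ are). For coarse uniformity I would take an entourage $E \s X^2$ and split it as $E = (E \cap A^2) \cup (E \cap B^2) \cup (E \cap (A \times B \cup B \times A))$. The first two pieces are entourages of $A$ and $B$ respectively, so $h^2$ sends them into entourages of $Z$ because $\varphi, \psi$ are coarsely uniform. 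The cross term is contained in $S \times T \cup T \times S$ with $S \s A$, $T \s B$ bounded; its image under $h^2$ lies in $\varphi(S) \times \psi(T) \cup \psi(T) \times \varphi(S)$, which is contained in a squared bounded set of $Z$ (using connectedness of $Z$), hence an entourage. A finite union of entourages is an entourage, so $h^2(E)$ is an entourage.

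Finally I would check uniqueness up to closeness: if $h' : X \to Z$ is another coarse map with $h'|_A$ close to $\varphi$ and $h'|_B$ close to $\psi$, then $\{(h(x), h'(x)) : x \in X\}$ is the union of $\{(h(a), h'(a)) : a \in A\}$ and $\{(h(b), h'(b)) : b \in B\}$, each of which is an entourage of $Z$ by hypothesis, so their union is an entourage and $h \sim h'$. The identity $h|_A = \varphi$, $h|_B = \psi$ is exact on the nose, so the commuting triangles hold strictly, in particular up to closeness.

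The step I expect to be the only real obstacle is handling the off-diagonal part $E \cap (A \times B \cup B \times A)$ of an entourage correctly — one needs to invoke both the defining property that it sits inside $S \times T \cup T \times S$ with $S, T$ bounded and the connectedness of the target $Z$ to conclude the image is an entourage; everything else is bookkeeping with the explicit coarse structure of $A \sqcup B$ recorded in the earlier example.
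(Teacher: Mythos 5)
Your proof is correct: the gluing map is well defined because $A$ and $B$ are disjoint as sets, your case analysis of an entourage of $A\sqcup B$ into the two diagonal blocks and the cross term matches the explicit description of the coarse structure recorded in the paper's example on disjoint unions, and the uniqueness-up-to-closeness argument is sound. The paper, however, takes a shortcut you did not: it observes that $A,B$ form a coarse cover of $X$ with bounded (indeed empty) intersection and then simply invokes Proposition~\ref{prop:gluemake}, which established that $U\mapsto\{\text{coarse maps }U\to Y\}/\text{closeness}$ is a sheaf on $\coarsetopology X$; existence of the glued map is the gluing axiom and uniqueness modulo closeness is the global axiom. Your direct verification is essentially an unrolling of that proposition's proof in the special case of a disjoint cover (where the compatibility condition on the overlap is vacuous and no leftover points need to be sent to a basepoint), so it is more self-contained and elementary, at the cost of redoing bookkeeping the paper has already packaged; the paper's route buys brevity and makes clear that the coproduct property is an instance of the sheaf condition for the coarse topology.
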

\begin{proof}
 Denote by $i_1:U\to X$ and $i_2:V\to X$ the inclusions. We check the universal property: Let $Y$ be a coarse space and $f_1:U\to Y$ and $f_2:V\to Y$ coarse maps. But $U,V$ are a coarse cover of $X$ such that $U\cap V$ is bounded. Now we checked that already in Proposition~\ref{prop:gluemake}. The existence of a map $f:X\to Y$ with the desired properties would be the gluing axiom and the uniqueness modulo closeness would be the global axiom.
\end{proof}

\begin{thm}
\label{thm:constantcoeffs}
 Let $X$ be a coarse space and $A$ an abelian group. If $X$ has finitely many ends then
 \[
  A(X)=A^{e(X)}
 \]
and if $X$ does not have finitely many ends then
\[
 A(X)=\bigoplus_\N A.
\]
Here $A(X)$ means the evaluation of the constant sheaf $A$ on $X$ at $X$.
\end{thm}
\begin{proof}
By the equalizer diagram for sheaves a sheaf naturally converts finite 
coproducts into finite products. If $X$ is one-ended and $U,V$ a coarse cover 
of 
$X$ with nonboundeds then $U,V$ intersect nontrivially. Thus $A(X)=A$ in this 
case. If $X$ has infinitely many ends then there is a directed system
\[
\cdots \to U_1\sqcup\cdots\sqcup U_n\to  U_1\sqcup\cdots \sqcup U_{n+1}\to
\]
in the dual category of $\mathcal I_X$ which is the category of coarse covers of $X$. Here the $U_i$ are nonbounded and constitute a coarse disjoint union in $X$. Now we use \cite[Definition 2.2.5]{Tamme1994} by which
\[
 \cohomology 0 X A=\varinjlim_{(U_i)_i}H^0((U_i)_i,A).
\]
Then we take the direct limit of the system
\[
 \cdots\to A^n\to A^{n+1}\to A^{n+2}\to\cdots.
\]
Thus the result.

\end{proof}

\begin{lem}
\label{lem:zplusmanyended}
 If a subset $U\s \Z_+$ is one-ended then the inclusion
 \[
  i:U\to \Z_+
 \]
is coarsely surjective.
\end{lem}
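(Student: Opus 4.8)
The plan is to argue by contraposition: if $i\colon U\to\Z_+$ is not coarsely surjective, then $U$ has at least two ends. Suppose $i$ is not coarsely surjective. Recall from Definition~\ref{defn:coarselysurjective} that this means no entourage $E$ of $\Z_+$ satisfies $E[U]=\Z_+$; equivalently, the complement $\Z_+\ohne E[U]$ is unbounded for every entourage $E$. Since entourages of $\Z_+$ are (up to enlargement) the sets $E(\Z_+,k)$, this says that for every $k$ there are arbitrarily large $n$ with $[n-k,n+k]\cap U=\emptyset$, i.e. $U$ has arbitrarily long gaps. The idea is to use these gaps to split $U$ into two unbounded, coarsely disjoint pieces.

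Concretely, I would choose an increasing sequence of integers $a_1<b_1<a_2<b_2<\cdots$ tending to infinity such that each interval $(b_j,a_{j+1})$ contains a gap of $U$ of length at least $j$ (possible by the previous paragraph), and such that $U\cap[a_j,b_j]$ is nonempty for infinitely many $j$ — in fact, since $U$ is unbounded, we can arrange that $U$ meets infinitely many of the "blocks" $[a_j,b_j]$. Then set
\[
 U_1=U\cap\bigcup_{j\ \mathrm{odd}}[a_j,b_j],\qquad U_2=U\cap\bigcup_{j\ \mathrm{even}}[a_j,b_j],
\]
after discarding finitely many blocks so that both $U_1$ and $U_2$ are unbounded. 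These cover $U$ as a set. To see they coarsely cover $U$ (equivalently that $U_1^c\times U_2^c$ is a coentourage in $U$, cf.\ Definition~\ref{defn:coarsecover} and the Proposition after it), note that for any entourage $E=E(\Z_+,k)\cap U^2$, the set $E[U_1^c]\cap E[U_2^c]$ is bounded: a point of $U$ within distance $k$ of both $U_1^c$ and $U_2^c$ must lie near a block boundary $a_j$ or $b_j$ with $j$ large enough that the adjacent gap exceeds $k$, forcing it into a finite range — here one uses that the gaps between consecutive blocks grow. The same computation shows $U_1\times U_2$ is a coentourage, so $U=U_1\sqcup U_2$ is a coarse disjoint union with both pieces unbounded, whence $U$ is not oneended.

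The main obstacle is the bookkeeping in the last step: one must choose the blocks $[a_j,b_j]$ and the interleaving gaps carefully enough that, for a fixed but arbitrary $k$, only finitely many blocks are "close" (within $k$) to both $U_1$ and $U_2$ simultaneously, and that the leftover near-boundary points of $U$ form a bounded set. This is exactly the kind of estimate appearing in the proof of Lemma~\ref{lem:zplusoneended}, run in reverse: there one produced off-diagonal points near block boundaries to obstruct coarse disjointness, whereas here one must ensure those obstructions cannot occur, which is why the gap lengths are taken to increase with $j$. Once that choice is in place, everything else is a direct unwinding of definitions.
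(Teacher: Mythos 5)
Your proposal is correct and follows essentially the same route as the paper: both argue the contrapositive, extract from the failure of coarse surjectivity a sequence of gaps in $U$ whose lengths grow without bound, and assign the blocks of $U$ between consecutive gaps alternately to two sets, which are then unbounded and coarsely disjoint because any pair of points from different sets is separated by a gap whose length eventually exceeds any fixed entourage parameter. The only difference is notational (the paper works with the gap centers $v_i$ satisfying $|u-v_i|>i$ for all $u\in U$ rather than with explicit block intervals), and both write-ups share the same small bookkeeping point you already flagged, namely arranging that each of the two alternating pieces actually meets infinitely many blocks.
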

\begin{proof}
 If the inclusion $i:U\to \Z_+$ is not coarsely surjective then there is an increasing sequence $(v_i)_i\s \Z_+$ such that for every $u\in U$:
 \[
  |u-v_i|>i.
 \]
Now define
\[
 A:=\{u\in U:v_{2i}<u<v_{2i+1},i\in\N\}
\]
and
\[
 B:=\{u\in U:v_{2i+1}<u<v_{2i},i\in\N\}.
\]
Then for every $a\in A,b\in B$ there is some $j$ such that $a<v_j<b$. Then
\f{
|a-b|&=|a-v_j|+|b-v_j|\\
&>2j.
}
If $i\in \N$ then $|a-b|\le i$ implies $a,b\le v_i$ Thus $A,B$ are a coarsely disjoint decomposition of $U$.
\end{proof}

\section{Remarks}

The starting point of this research was the idea to define sheaves on coarse spaces as presented in~\cite{Schmidt1999}. And then we noticed that cocontrolled subsets of $X^2$ which have first been studied in~\cite{Roe2003} have some topological features. 

Finally, after defining coarse covers which depend on the notion of 
coentourages, we came up with the methods of this paper. Note that coarse 
cohomology with twisted coefficients is basically just sheaf cohomology on the Grothendieck topology determined by coarse covers.

It would be possible, conversely, after a more thorough examination that other 
 cohomology and homology theories in the coarse category can be computed 
using sheaf cohomology tools. By \cite{Hartmann2017TCK} a modified version of controlled operator $K$-theory is a cosheaf on proper metric spaces. We obtain a new Mayer-Vietoris sequence using coarse covers. We also examined coarse cohomology by Roe and looked for sheaf properties. As of now we showed coarse cohomology in dimension $2$ is a sheaf on coarse spaces. It would be interesting to explore if coarse cohomology in dimension $2+q$ is a derived functor.

We wonder if the new sheaf cohomology will be of any help with understanding coarse spaces. We investigated in which way coarse covers determine a topology on a boundary of a coarse space. In~\cite{Hartmann2017b} we introduce a functor which assigns a uniform space with a coarse space. The uniformity is generated by coarse covers and coarse maps are mapped to uniformly continuous maps.

However, as of yet, we do not know many interesting sheaves on coarse spaces 
besides the constant sheaf and the sheaf of functions. It would be interesting 
to find another class of sheaves which can be defined for coarse spaces.

\bibliographystyle{halpha-abbrv}
\bibliography{mybib}

\address

\end{document}